\newcommand\bfu{\mathbf{u}}
\newcommand\bfv{\mathbf{v}}
\newcommand\bfz{\mathbf{z}}
\newcommand\bfn{\mathbf{n}}
\newcommand\bft{\mathbf{t}}
\newcommand\bff{\mathbf{f}}
\newcommand\bfw{\mathbf{w}}
\newcommand\bfx{\mathbf{x}}
\newcommand\bfzero{\textbf{0}}
\newcommand\bfpsi{\boldsymbol{\psi}}
\DeclareMathOperator\Div{\mathrm{div}}
\DeclareMathOperator\Rot{\mathrm{rot}}
\DeclareMathOperator\Grad{\mathbf{grad}}
\DeclareMathOperator\Curl{\mathbf{curl}}
\newcommand\HrotE{\mathbf{H}(\Rot;E)}
\newcommand\HdivE{\mathbf{H}(\Div;E)}
\newcommand\Hrot{\mathbf{H}(\Rot;\Omega)}
\newcommand\Hroto{\mathbf{H}(\Rot^0;\Omega)}
\newcommand\Hdiv{\mathbf{H}(\Div;\Omega)}
\newcommand\Hodiv{\mathbf{H}_0(\Div;\Omega)}
\newcommand\Honezero{H^1_0(\Omega)}
\newcommand\Hone{H^1(\Omega)}
\newcommand\Hsbold{\mathbf{H}^s(\Omega)}
\newcommand\Ltwovecomega{\mathbf{L}^2(\Omega)}
\newcommand\Ltwovec{\mathbf{L}^2(E)}
\newcommand\RE{\mathbb{R}}
\newcommand\V{\mathbf{V}}
\newcommand\Vh{\mathbf{V}_h}
\newcommand\Vhn{\mathbf{V}_{h_n}}
\newcommand\Qh{Q_h}
\newcommand\Qhn{Q_{h_n}}
\newcommand\VE{\mathbf{V}_h^E}
\newcommand\T{\mathcal{T}}
\newcommand\Po{\mathcal{P}}
\newcommand\PE{\Pi_h^E}
\newcommand\bh{b_h}
\newcommand\bho{b_{h,0}}
\newcommand\bhs{b_{h,s}}
\newcommand\bE{\bho^E}
\newcommand\K{\boldsymbol{\mathcal{K}}}
\newcommand\Kerrot{{\K^{\Rot}}}
\newcommand\kerdiv{{\K_h^{\Div}}}
\newcommand\G{\boldsymbol{\mathcal{G}}}
\newcommand\Vhh{\widetilde{\mathbf{V}}_h}
\newcommand\Kbhrot{\K^{\Rot}_h}
\newcommand\Kbhnrot{\K^{\Rot}_{h_n}}
\newcommand\kerbh{\boldsymbol{\mathcal{K}}^b_h}
\newcommand\inter{\mathbb{K}_h}
\newcommand\uhn{\bfv_{{h}_n}}
\newcommand\uIn{\bfu^I_{{h}_n}}
\newcommand\vhn{\bfv_{{h}_n}}
\newcommand\bhn{b_{h_n}}
\newcommand\phn{p_{h_n}}
\newcommand\qhn{q_{h_n}}
\newcommand\ubar{\bar{\bfu}}
\newcommand{\m}[1]{\mathsf{#1}}
\newcommand\SE{S^E}
\newcommand\bfuSA{\widetilde\bfu_h^I}
\newtheorem{theorem}{Theorem}
\newtheorem{proposition}{Proposition}
\newtheorem{lemma}{Lemma}
\newtheorem{example}{Example}
\newtheorem{definition}{Definition}
\newtheorem{remark}{Remark}
\begin{document}
\title[On the stabilization of VEM for an acoustic vibration problem]{On the
stabilization of a virtual element method for an acoustic
vibration problem}

\author[L. Alzaben]{Linda Alzaben}
\address{King Abdullah University of Science and Technology (KAUST), Saudi Arabia}
\email{linda.alzaben@kaust.edu.sa}
\author[D. Boffi]{Daniele Boffi}
\address{King Abdullah University of Science and Technology (KAUST), Saudi Arabia
and University of Pavia, Italy}
\email{daniele.boffi@kaust.edu.sa}
\author[A. Dedner]{Andreas Dedner}
\address{University of Warwick, UK}
\email{A.S.Dedner@warwick.ac.uk}
\author[L. Gastaldi]{Lucia Gastaldi}
\address{University of Brescia, Italy}
\email{lucia.gastaldi@unibs.it}


\begin{abstract}

In this paper we introduce an abstract setting for the convergence analysis of
the virtual element approximation of an acoustic vibration problem. We discuss
the effect of the stabilization parameters and remark that in some cases it
is possible to achieve optimal convergence without the need of any
stabilization. This statement is rigorously proved for lowest order triangular
element and supported by several numerical experiments.

\end{abstract}

\keywords{Eigenvalue problem; Virtual element method; Stabilization free VEM;
Acoustic problem; Discrete compactness property}
\subjclass[2020]{65N25; 65N30; 70J30; 76M10}
\maketitle


\section{Introduction}

In this paper we study and analyze a virtual element method (VEM) introduced
in~\cite{Rodolfo} for the approximation of an acoustic vibration problem.

The use of VEM for the approximation of the solution to PDE eigenvalues
problems has been adopted and analyzed by several authors, starting from
standard elliptic problem~\cite{GardiniVacca}, including $hp$
VEM~\cite{Gardinihp}, nonconforming VEM~\cite{Gardininonc}, and mixed
schemes~\cite{VEMmixed}, the Steklov eigenvalue
problem~\cite{Steklov1,Steklov2,Steklov3,Steklov4}, plate
models~\cite{plate1,plate2,plate3,plate4}, linear
elasticity~\cite{elasticity}, to transmission
problems~\cite{transmission1,transmission2,transmission3,transmission4}
Recently,
it was observed that the presence of the stabilization parameters can be
problematic~\cite{Calcolo,Sema}. In particular, spurious modes can pollute the
spectrum and it could be difficult to rule them out unless the structure of
the exact solutions is known in advance.

After our prior investigations on the effect of the stabilization parameters
on the VEM numerical approximation of PDE eigenvalues problems, our attention
was drawn by the following sentences of~\cite[Section~5.3]{Rodolfo}:
``\emph{In the present case, no spurious eigenvalue was detected for any
choice of the stability constant. However, for large values of
[the stabilization parameters] $\sigma_E$, the
eigenvalues computed with coarse meshes could be very poor}'', and ``\emph{In
fact, it can be seen from this table that even the value $\sigma_E=0$ yields
very accurate results, in spite of the fact that for such a value of the
parameter the stability estimate and hence most of the proofs of the
theoretical results do not hold}''.

The aim of this paper is to study the convergence of the scheme proposed
in~\cite{Rodolfo} in an abstract theoretical setting and to discuss the effect
of the stabilization parameter.
By doing so, we make rigorous some of the statements appearing
in~\cite{Rodolfo}.
Our ultimate goal is to show that in some cases the stabilization is not
necessary (that is, $\sigma_E=0$ with the notation of the sentences above), to
provide numerical evidence of that, and to prove it rigorously when possible.

Parameter-free VEM schemes are the object of an intense and complex discussion
in the recent literature, starting from the pioneer work~\cite{Berrone2021}
and proceeding with~\cite{Berrone2022,Berrone2023,Lamperti,Chen}. The study of
parameter-free VEM is particular important in the case of eigenvalue problems,
where the presence of parameters can be source of spectral
pollution~\cite{Calcolo,Sema}. A first investigation on parameter free
eigenvalue problem is presented in~\cite{Meng}.

The structure of the paper is as follows: after some preliminary notation
given in Section~\ref{sec:function_spaces}, we describe the acoustic vibration
problem in Section~\ref{sec:the_continues_problem}. The discretization of the
problem is presented in Section~\ref{sec:the_virtual_element_discretization}
and an abstract theory for the approximation is developed in
Section~\ref{sec:abstract_theory}. The theory is based on an equivalent mixed
formulation of the problem, which allows to adapt the classical arguments
of~\cite{BBG2,Acta} to this situation. A crucial role is played by the
discrete compactness property~\cite{SDCP}.
As consequence of the abstract theory, it can be shown that standard
stabilized virtual elements are optimally convergent.
Section~\ref{sec:stabfree} will then discuss the convergence when the
stabilization parameter is set to zero.
Finally, Section~\ref{sec:numerical_experiments} reports on several numerical
experiments which confirm the theoretical results and demonstrate that
parameter free schemes are optimal in several circumstances.


\section{Function spaces and preliminaries} 
\label{sec:function_spaces}
Throughout our paper, $\Omega$ will be a simply connected polygonal domain in
$\RE^2$. 
We begin by defining the functional framework and the operators that will 
be explicitly utilized. For an integer $s\ge0$ and a generic open bounded
domain $\omega$ with Lipschitz boundary, we denote by $H^s(\omega)$ the
usual Sobolev space of (possibly fractional) order $s$.
The symbols $\|\cdot\|_{s,\omega}$ and $|\cdot|_{s,\omega}$ denote the
corresponding norm and seminorm, respectively.
The reference to $\omega$ might be omitted when no confusion arises.
We use bold letters to indicate vector valued functions with their corresponding
functional spaces. For example, $\mathbf{L}^2(\omega):=[L^2(\omega)]^2$. 

We also use the convention $H^0(\omega):=L^2(\omega)$ with the corresponding
norm $\|\cdot\|_{0,\omega}$.
The $L^2$-inner product for both spaces $L^2(\omega)$ 
and $\mathbf{L}^2(\omega)$ is denoted by $(\cdot,\cdot)_{\omega}$. 
When no confusion may arise, the domain is omitted and the $L^2$-inner
product is simply denoted by $(\cdot,\cdot)$. 

We consider the divergence and gradient operators, denoted by $\Div$ and
$\Grad$ respectively, which are defined as follows:
\[
\aligned
&\Div \bfv &:=& \quad \frac{\partial v_1}{\partial x_1} 
              +\frac{\partial v_2}{\partial x_2},\\
&\Grad q   &:=& \quad\left(\frac{\partial q}{\partial x_1} ,
               \frac{\partial q}{\partial x_2}\right)^\top,
\endaligned
\]
where $\bfv = (v_1,v_2)^\top$ is a vectorfield represented as a
two-dimensional column vector, with $\top$ denoting the transpose, and $q$ a
scalar function.
Moreover, we consider the rotation ($\Rot$) and curl ($\Curl$) operators 
which are defined as
\[
\aligned
&\Rot\bfv &:=& \quad \frac{\partial v_2}{\partial x_1}
              -\frac{\partial v_1}{\partial x_2},\\
&\Curl q &:=& \quad\left(\frac{\partial q}{\partial x_2}, 
               \frac{-\partial q}{\partial x_1}\right)^\top.
\endaligned
\]

We also recall additional standard function spaces along with 
their corresponding norms as follows:
\[
\aligned
 \Hdiv &:= \{\bfv \in \Ltwovecomega: \Div \bfv \in L^2(\Omega) \}, 
 \text{ with }  \|\bfv\|^2_{\Div} := \|\bfv\|^2_{0}+\|\Div\bfv\|^2_{0}, \\
 \Hrot &:= \{\bfv \in \Ltwovecomega:\Rot \bfv \in L^2(\Omega) \}, 
 \text{ with }  \|\bfv\|^2_{\Rot} := \|\bfv\|^2_{0}+\|\Rot\bfv\|^2_{0}.\\
  \endaligned
  \]

Let $\bfv$ and $q$ be sufficiently smooth and let $\bfn$ and $\bft$ be the
outer unit normal and counterclockwise unit tangent vectors to $\Omega$,
respectively, then the integration by parts for both divergence and rotation
operators reads
\[
\aligned
(\Div \bfv, q) &=& -(\bfv, \Grad q) 
+ (\bfv \cdot \bfn,q)_{\partial\Omega}, \\
(\Rot\bfv, q) &=& (\bfv, \Curl q)
 + (\bfv \cdot \bft,q)_{\partial\Omega}. 
\endaligned
\]

%

Specifically, we will deal with $\Hone$ and $\Honezero$ equipped with norm 
$\|\cdot\|_1$, where $\Honezero$ is defined, in the sense of the trace operator
$\gamma$ on the boundary $\partial\Omega$, by
\[
\Honezero := \{v\in\Hone: \gamma (v)=0\}.
\]
We recall that for $q\in\Honezero$ the above integration by parts simplifies
to
\[
\aligned
&(\Div \bfv, q) = -(\bfv, \Grad q),\\
&(\Rot \bfv , q) = (\bfv, \Curl q).
\endaligned
\]

Moreover, we define two subspaces of $\Hdiv$ and $\Hrot$
\[
\aligned
\Hodiv&:=\{\bfv\in\Hdiv: \bfv\cdot\bfn=0,&\text{on}& \ \partial\Omega\}, &\\
\Hroto &:= \{\bfv \in \Hrot:\ \Rot \bfv = 0,&\text{in}& \  \Omega\}. \\ 
\endaligned
\]
%
%

Finally, we recall the following compactness property
(see~\cite{kik_weak,SDCP}).
\begin{lemma}
\label{lem:Hodiv_compactness_L2}
If $\Omega$ is a polygonal domain, then there exists $s\in(1/2,1)$, such that
the subspace $\Hodiv\cap\Hroto$ is contained in $\Hsbold$ which is compactly
embedded into $\Ltwovecomega$, that is
\[
\Hodiv \cap\Hroto \subset \Hsbold \subset\subset \Ltwovecomega, \quad
s>\frac{1}{2}.
\]
 \end{lemma}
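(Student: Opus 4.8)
The plan is to exploit the simple connectedness of $\Omega$ together with the vanishing rotation in order to introduce a scalar potential, thereby reducing the vector regularity statement to a scalar elliptic regularity result for the Neumann Laplacian. First I would take an arbitrary $\bfv\in\Hodiv\cap\Hroto$. Since $\Rot\bfv=0$ in $\Omega$ and $\Omega$ is simply connected, there exists a scalar potential $\phi\in\Hone$, unique up to an additive constant, such that $\bfv=\Grad\phi$.

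Next I would translate the two remaining conditions on $\bfv$ into data for a boundary value problem for $\phi$. The condition $\Div\bfv\in\Ltwo$ becomes $\Delta\phi=\Div\bfv=:g\in\Ltwo$, while the normal trace condition $\bfv\cdot\bfn=0$ on $\partial\Omega$ becomes the homogeneous Neumann condition $\Grad\phi\cdot\bfn=0$. Thus $\phi$ is, up to a constant, the solution of the Neumann problem $\Delta\phi=g$ with $\Grad\phi\cdot\bfn=0$ on $\partial\Omega$; the compatibility condition $\int_\Omega g=0$ is automatically satisfied, since by the divergence theorem and the vanishing normal trace one has $\int_\Omega\Div\bfv=\int_{\partial\Omega}\bfv\cdot\bfn=0$.

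The crux of the argument is then the elliptic regularity of this Neumann problem on a polygonal domain. I would invoke the corner regularity theory for second-order elliptic problems on polygons (as in Grisvard or Dauge): the solution $\phi$ belongs to $H^{1+s}(\Omega)$ for every $s<\pi/\omega$, where $\omega<2\pi$ denotes the largest interior angle of $\Omega$. Because $\Omega$ is a genuine polygon we have $\omega<2\pi$, hence $\pi/\omega>1/2$, so some exponent $s\in(1/2,1]$ is admissible. From $\phi\in H^{1+s}(\Omega)$ we obtain $\bfv=\Grad\phi\in\Hsbold$, together with a corresponding norm estimate bounding $\|\bfv\|_{s,\Omega}$ in terms of $\|\bfv\|_{\Div}$. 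This establishes the inclusion $\Hodiv\cap\Hroto\subset\Hsbold$.

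Finally, the compact embedding $\Hsbold\subset\subset\Ltwovecomega$ is a direct consequence of the Rellich--Kondrachov theorem, valid since $s>0$ and $\Omega$ is bounded with Lipschitz boundary. I expect the main obstacle to be the sharp statement of the fractional corner regularity, namely verifying that the exponent can be taken strictly above $1/2$: this is precisely where the hypothesis that $\Omega$ is polygonal is essential, as it excludes slits and cusps for which $\omega=2\pi$ and the regularity index would degenerate to exactly $1/2$.
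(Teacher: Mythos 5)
Your proof is correct. Note that the paper does not actually prove this lemma: it recalls it from the cited references, and your argument --- using the simple connectedness of $\Omega$ (which the paper assumes throughout Section~2) to write $\bfv=\Grad\phi$, reducing to the homogeneous Neumann problem for the Laplacian with the compatibility condition checked via the divergence theorem, invoking Grisvard-type corner regularity to get $\phi\in H^{1+s}(\Omega)$ for some $s>1/2$ because every interior angle satisfies $\omega<2\pi$, and concluding with Rellich--Kondrachov --- is exactly the standard proof found in that literature. The only cosmetic discrepancy is that you obtain $s\in(1/2,1]$ while the lemma states $s\in(1/2,1)$; this is immaterial, since $H^{1+1}(\Omega)\subset H^{1+s}(\Omega)$ for any $s<1$, so one may always shrink the exponent.
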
 


\section{The continuous problem} 
\label{sec:the_continues_problem}
In this section, we recall the continuous strong formulation of a model
describing the free vibrations of an acoustic fluid within a bounded rigid
cavity in $\RE^2$. 
We derive several variational formulations and discuss the connections between
these formulations and the original problem. Additionally, we obtain a mixed
formulation that we are going to use for the analysis of the problem, and we
prove its equivalence to the original variational formulation.

\subsection{Problem setting and its variational formulation}


We consider the following boundary value source problem. 
Given $\bff\in \Ltwovecomega$, find $\bfu$ such that:
\begin{equation}\label{eq:source_strong_form}
\left\{
\aligned
&-\Grad \Div \bfu = \bff, &\quad \text{ in }  \Omega, \\
&\Rot \bfu = 0, &\quad \text{ in }  \Omega, \\
&\bfu \cdot \bfn = 0, &\quad \text{ on } \partial\Omega,
\endaligned
\right.
\end{equation}
where $\bfu$ is the fluid displacement and $\bfn$ is the outer unit normal
vector to the boundary $ \partial \Omega$. 

Our main focus is to study the eigenvalue problem associated 
with~\eqref{eq:source_strong_form}:
find the eigenpair $(\lambda,\bfu)$ with $\bfu\neq \bfzero$ such that: 
\begin{equation}\label{eq:eig_strong_form}
\left\{
\aligned
&-\Grad \Div \bfu = \lambda \bfu, &\quad \text{ in }  \Omega, \\
&\Rot \bfu = 0, &\quad \text{ in }  \Omega, \\
&\bfu \cdot \bfn = 0, &\quad \text{ on } \partial\Omega.
\endaligned
\right.
\end{equation}
In this formulation we consider isotropic materials and we set all the
involved coefficients equal to one. Our analysis extends naturally to the
general situation as long as the solution is regular enough. For more details
on the derivation of the model, the interested reader can refer
to~\cite{Rodolfo} and the references therein.
%

The most natural variational form of Problem~\eqref{eq:eig_strong_form} reads
as follows: find $(\lambda,\bfu)\in\RE\times\Hodiv\cap\Hroto$ with
$\bfu\neq\bfzero$ such that 
\begin{equation}
\label{eq:pbWithRot_constrain}
(\Div\bfu,\Div\bfv)=\lambda (\bfu,\bfv) \quad \forall \bfv \in \Hodiv \cap \Hroto.\\
\end{equation} 
Since our problem is symmetric, we can confine ourselves to real eigenvalues.
Moreover, we recall that the solution operator associated
to~\eqref{eq:pbWithRot_constrain} is compact in $\Ltwovecomega$ thanks to
Lemma~\ref{lem:Hodiv_compactness_L2}. As a consequence, 
Problem~\eqref{eq:pbWithRot_constrain} admits a countable set of eigenvalues,
which can be ordered in a non decreasing divergent sequence (multiple
eigenvalues are repeated accordingly to their multiplicity)
\[
0<\lambda_1\le\lambda_2\le \cdots\le\lambda_n\le\cdots,
\]
with associated eigenfunctions chosen such that
\[
\aligned
&(\bfu_i,\bfu_j)=0, \quad (\Div\bfu_i,\Div\bfu_j)=0, \text{ if }i\ne j\\
&\|\bfu_i\|_0=1, \quad \|\Div\bfu_i\|^2_0=\lambda_i.
\endaligned
\]


It is well known that this formulation is not good for numerical 
approximation, since it requires to construct a
finite dimensional subspace of $\Hodiv \cap \Hroto$. In particular,
the rotation free constraint needs to be imposed 
exactly for a conforming approximation of~\eqref{eq:pbWithRot_constrain}.
Thus, in general, another formulation is considered by looking for
eigensolutions of~\eqref{eq:pbWithRot_constrain} in the space
$\V := \Hodiv$.
However, this implies that the zero frequency $\lambda = 0$, associated with
the infinite dimensional eigenspace $\Curl(\Honezero)$, is added to the
spectrum. 
The way to tackle this problem is to discard the zero eigenvalue after
discretizing. 

Therefore, the problem we investigate reads:
find $(\lambda,\bfu)\in \RE\times \V$ with $\bfu\neq \bfzero$ such that 
\begin{equation}
\left\{
\aligned
&(\Div\bfu,\Div\bfv)=\lambda(\bfu,\bfv) \quad \forall\bfv\in \V,\\
&\lambda\ne0.
\endaligned
\right.
\label{eq:pbcont}
\end{equation}


\begin{remark}\label{rem:spurious_modes}
It is well known that the Galerkin  discretization of~\eqref{eq:pbcont}
requires a careful choice of finite dimensional supspaces.
Indeed, the discrete spectrum can be characterized by the presence of two
kinds of spurious modes.
One might be coming from the zero frequency polluting the whole spectrum and
the other might originate from the numerical method itself,
see~\cite{bbg1,bfgp} and~\cite{DBNote_DeRham}.
\end{remark}

The kernel of the divergence operator, defined by
$$
\K^{\Div} := \{\bfv \in \V : \Div \bfv = 0\},
$$
has a crucial role in our formulation.
The following lemma characterizes the decomposition of 
the space $\V$ that will be used later on, see~\cite[Lemma~2]{Rodolfo}
and~\cite[Chapter I]{GR}.
%
\begin{lemma}
\label{lem:lem2_rodolfo}
	Let $\G:=\{ \Grad q : q \in \Hone \}$. Then
\[
	\V = \K^{\Div} \oplus (\G \cap \V) 
\]
is an orthogonal decomposition in both $\Ltwovecomega$ and $\Hdiv$.
Moreover, there exists $s\in (\frac{1}{2},1]$ such that, 
for all $\bfv \in \V$, $\bfv = \Grad q +\bfpsi $, with
$\Grad q \in (\G \cap \V)$ and $\bfpsi \in \K^{\Div}$, and
\begin{equation}
 	\Grad q \in \Hsbold\quad \text{ with }\quad \|\Grad q \|_{s} \le C \|\Div \bfv \|_{0}. 
\end{equation}
\end{lemma}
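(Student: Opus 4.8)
The plan is to prove this Helmholtz-type decomposition in two stages: first establish the orthogonal splitting $\V = \K^{\Div} \oplus (\G \cap \V)$, then extract the regularity estimate for the gradient part.

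First I would set up the decomposition. Given $\bfv \in \V = \Hodiv$, I want to find $q \in \Hone$ such that $\Grad q \in \V$ and $\bfv - \Grad q \in \K^{\Div}$. The natural way is to solve the auxiliary problem: find $q \in \Hone/\RE$ such that $(\Grad q, \Grad r) = (\bfv, \Grad r)$ for all $r \in \Hone$. This is a Neumann problem for the Laplacian; since the right-hand side is a bounded linear functional on $\Hone$ vanishing on constants (using $\bfv \cdot \bfn = 0$ and integration by parts, the compatibility condition is automatically satisfied), Lax--Milgram on $\Hone/\RE$ gives a unique $q$. Setting $\bfpsi := \bfv - \Grad q$, I would check $\bfpsi \in \K^{\Div}$: indeed $\Div \bfpsi = 0$ follows because for all $r \in \Hone$ we have $(\Div \bfpsi, r) = -(\bfpsi, \Grad r) = -(\bfv - \Grad q, \Grad r) = 0$ by construction (here I need $\bfpsi \cdot \bfn = 0$ to drop boundary terms, which holds since both $\bfv$ and $\Grad q$ are in $\V$). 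Orthogonality in $\Ltwovecomega$ is exactly the relation $(\Grad q, \bfpsi) = 0$, which is the defining variational equation; orthogonality in $\Hdiv$ then follows because $\Div \bfpsi = 0$, so the divergence terms contribute nothing to the $\Hdiv$ inner product. That $\Grad q \in \V$ requires checking $\Grad q \cdot \bfn = 0$, which comes from $\bfn \cdot \Grad q = \bfn \cdot \bfv - \bfn \cdot \bfpsi$; I would instead argue that $\Grad q = \bfv - \bfpsi \in \V$ directly once $\bfpsi \in \V$ is known, closing the logical loop carefully.

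Next I would prove the regularity estimate $\Grad q \in \Hsbold$ with $\|\Grad q\|_s \le C\|\Div \bfv\|_0$. The key observation is that $\Grad q$ lies in $\Hroto$, since $\Rot \Grad q = 0$ identically, and it lies in $\Hodiv$ as just shown; therefore $\Grad q \in \Hodiv \cap \Hroto$, which by Lemma~\ref{lem:Hodiv_compactness_L2} embeds into $\Hsbold$ for some $s \in (1/2,1)$. This gives the regularity. For the norm bound, note that $\Div \Grad q = \Div \bfv$ (because $\Div \bfpsi = 0$), so $\Grad q$ solves $\Rot \Grad q = 0$, $\Div \Grad q = \Div \bfv$, $\Grad q \cdot \bfn = 0$. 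The associated a priori estimate for this div-curl system on a polygonal domain, which underlies the embedding in Lemma~\ref{lem:Hodiv_compactness_L2}, yields $\|\Grad q\|_s \le C\big(\|\Div \Grad q\|_0 + \|\Rot \Grad q\|_0\big) = C\|\Div \bfv\|_0$, the rotation term vanishing.

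The main obstacle I expect is making the regularity constant depend only on $\|\Div \bfv\|_0$ rather than on the full norm $\|\bfv\|_{\Div}$. A naive application of the graph-norm estimate for the compact embedding would produce $\|\Grad q\|_s \le C\|\Grad q\|_{\Div} = C(\|\Grad q\|_0 + \|\Div \bfv\|_0)$, which still contains the undesirable $\|\Grad q\|_0$ term. Eliminating it requires the sharper regularity theory for the div-curl system on polygons, where the $\Hsbold$-estimate is driven purely by the data $\Div \bfv$ and $\Rot(\Grad q) = 0$ together with the homogeneous boundary condition; the scaling/homogeneity of the problem (and the fact that constants are the only harmonic functions with vanishing normal derivative, already quotiented out) is what permits dropping the $L^2$ term. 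I would invoke the regularity results for $\Hodiv \cap \Hroto$ cited alongside Lemma~\ref{lem:Hodiv_compactness_L2} to obtain this clean bound rather than re-deriving it.
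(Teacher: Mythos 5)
The paper itself gives no proof of Lemma~\ref{lem:lem2_rodolfo}; it simply quotes \cite[Lemma~2]{Rodolfo} and \cite[Chapter~I]{GR}. Your proposal supplies the classical argument that those references contain, and it is essentially correct: solving the Neumann problem $(\Grad q,\Grad r)=(\bfv,\Grad r)$ for all $r\in\Hone$ on the quotient by constants, setting $\bfpsi=\bfv-\Grad q$, reading off $L^2$- and $\Hdiv$-orthogonality from the variational equation and from $\Div\bfpsi=0$, and then obtaining the $\Hsbold$ regularity of the gradient part from Lemma~\ref{lem:Hodiv_compactness_L2} is exactly the standard route. Two spots deserve tightening. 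First, your parenthetical justification of $\Div\bfpsi=0$ is circular as written, since it invokes $\Grad q\in\V$, which is part of the conclusion; the clean order (which you gesture at but do not spell out) is: test against $r\in\Honezero$ to get $\Div\bfpsi=0$ with no boundary term, then use the generalized Green formula $\langle\bfpsi\cdot\bfn,\gamma r\rangle_{\partial\Omega}=(\Div\bfpsi,r)+(\bfpsi,\Grad r)=0$ for all $r\in\Hone$ to conclude $\bfpsi\cdot\bfn=0$, hence $\bfpsi\in\K^{\Div}$, and finally $\Grad q=\bfv-\bfpsi\in\V$ by difference. Second, the ``main obstacle'' you identify (eliminating $\|\Grad q\|_0$ from the bound) has a one-line resolution that avoids appealing to sharper div--curl a priori estimates: taking $r=q$ (zero mean) in the Neumann problem and integrating by parts, using $\bfv\cdot\bfn=0$, gives $\|\Grad q\|_0^2=(\bfv,\Grad q)=-(\Div\bfv,q)\le C\|\Div\bfv\|_0\|\Grad q\|_0$ by Poincar\'e--Wirtinger, so $\|\Grad q\|_0\le C\|\Div\bfv\|_0$; the graph-norm estimate of the embedding then yields $\|\Grad q\|_s\le C\bigl(\|\Grad q\|_0+\|\Div\bfv\|_0\bigr)\le C\|\Div\bfv\|_0$, the rotation term vanishing identically. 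With these two repairs your proof is complete; note also that the simple connectedness of $\Omega$, a standing assumption of the paper, is what legitimizes your alternative appeal to div--curl regularity with trivial kernel.
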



\subsection{Mixed formulation}


Another way to deal with the rotation free constraint is by considering a
\textit{mixed formulation} obtained by adding a Lagrange multiplier associated
with the constraint.
We consider the mixed formulation only for the analysis of~\eqref{eq:pbcont},
while the numerical discretization is performed utilizing the standard formulation.

Let us set $ Q :=\Honezero$, then the mixed formulation of
Problem~\eqref{eq:pbcont} reads:
find $(\lambda,\bfu)\in\RE\times\V$ with $\bfu\ne\textbf{0}$
such that for some $p\in Q$ it holds
\begin{equation}
\left\{
\aligned
&(\Div\bfu,\Div\bfv)+(\bfv,\Curl p)=\lambda(\bfu,\bfv) && \forall\bfv\in\V,\\
&(\bfu,\Curl q)=0 && \forall q\in Q.
\endaligned
\right.
\label{eq:kikcont}
\end{equation}

This is the \emph{rotated} version of the so called Kikuchi formulation used
for the approximation of the Maxwell eigenvalue problem~\cite{kik}.

Problems~\eqref{eq:pbcont} and~\eqref{eq:kikcont} are equivalent as
stated in the following proposition.

\begin{proposition}
Let $(\lambda,\bfu)$ be a solution of~\eqref{eq:pbcont}, then $(\lambda,\bfu)$
solves~\eqref{eq:kikcont} with $p=0$. Conversely, if $(\lambda,\bfu)$
solves~\eqref{eq:kikcont} for some $p$, then $(\lambda,\bfu)$ is also a
solution of~\eqref{eq:pbcont}.
\label{pr:cont}
\end{proposition}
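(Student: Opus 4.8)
The plan is to prove the two implications separately, exploiting the structure of the constraint space. The forward direction should be the easy one: if $(\lambda,\bfu)$ solves~\eqref{eq:pbcont}, I claim it solves~\eqref{eq:kikcont} with $p=0$. The first equation of~\eqref{eq:kikcont} then reduces to the equation in~\eqref{eq:pbcont}, which holds by assumption. The only thing to verify is the second equation, namely $(\bfu,\Curl q)=0$ for all $q\in Q$. This is precisely the statement that $\bfu$ is orthogonal to $\Curl(\Honezero)$, i.e.\ that $\bfu\in\Hroto$ in the weak sense. To extract this, I would test~\eqref{eq:pbcont} against $\bfv=\Curl q$ for $q\in Q=\Honezero$; since $\Div(\Curl q)=0$, such $\bfv$ lies in $\K^{\Div}\subset\V$, the left-hand side vanishes, and we obtain $\lambda(\bfu,\Curl q)=0$. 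Because $\lambda\ne0$, the constraint $(\bfu,\Curl q)=0$ follows.

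For the converse, suppose $(\lambda,\bfu)$ solves~\eqref{eq:kikcont} for some $p\in Q$. The second equation immediately gives the constraint $(\bfu,\Curl q)=0$ for all $q\in Q$, so it remains to show that $p=0$ (equivalently, that the multiplier term drops out) and that $\lambda\ne0$; then the first equation reduces to~\eqref{eq:pbcont}. To show $p=0$, I would test the first equation of~\eqref{eq:kikcont} with $\bfv=\Curl p$. Since $\Div(\Curl p)=0$, the term $(\Div\bfu,\Div\bfv)$ vanishes, and using the constraint $(\bfu,\Curl p)=0$ (take $q=p$ in the second equation) the right-hand side $\lambda(\bfu,\Curl p)$ also vanishes. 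This leaves $(\Curl p,\Curl p)=\|\Curl p\|_0^2=0$, whence $\Curl p=\Grad(\cdot)$ is zero; since $p\in\Honezero$ and $\Omega$ is connected, $\Curl p=0$ forces $p=0$. With $p=0$, the first equation is exactly the eigenvalue relation in~\eqref{eq:pbcont}.

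It then remains only to confirm $\lambda\ne0$. If $\lambda=0$ and $p=0$, the first equation would read $(\Div\bfu,\Div\bfv)=0$ for all $\bfv\in\V$; taking $\bfv=\bfu$ gives $\Div\bfu=0$, so $\bfu\in\K^{\Div}$. Combined with the constraint $\bfu\in\Hroto$ and the orthogonal decomposition of Lemma~\ref{lem:lem2_rodolfo}, I would argue that $\bfu$ must be a rotation-free, divergence-free, normal-trace-free field; but such a field is a harmonic vector field, and on a simply connected domain the relevant cohomology is trivial, forcing $\bfu=\bfzero$, contradicting $\bfu\ne\bfzero$. Hence $\lambda\ne0$, and $(\lambda,\bfu)$ solves~\eqref{eq:pbcont}.

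The main obstacle I anticipate is the bookkeeping around whether $\lambda\ne0$ must be assumed or derived: the formulation~\eqref{eq:kikcont} as written does not explicitly carry the constraint $\lambda\ne0$, so one must be careful about whether the converse is claimed for every solution or only nonzero-$\lambda$ solutions. The cleanest route is to prove $p=0$ first (which does not use $\lambda\ne0$), thereby reducing~\eqref{eq:kikcont} to the unconstrained-multiplier eigenproblem, and then invoke the simple-connectedness of $\Omega$ via Lemma~\ref{lem:Hodiv_compactness_L2} and the decomposition in Lemma~\ref{lem:lem2_rodolfo} to rule out the spurious zero mode. The topological step is where care is genuinely needed; everything else is a direct testing argument.
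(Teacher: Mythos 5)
Your proposal is correct and follows essentially the same route as the paper's proof: the forward direction by testing~\eqref{eq:pbcont} with $\bfv=\Curl q$ and using $\lambda\ne0$, and the converse by first deducing $p=0$ from testing with $\bfv=\Curl p$ together with the constraint, then ruling out $\lambda=0$ because a divergence-free, rotation-free field in $\V$ on the simply connected domain must vanish. Your version merely makes explicit the topological step and the choice $\bfv=\bfu$ that the paper leaves implicit.
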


\begin{proof}

Let $(\lambda,\bfu)$ be a solution of~\eqref{eq:pbcont}. Then $\lambda\ne0$
implies that $(\bfu,\Curl q)=0$ for all $q\in Q$ (take
$\bfv=\Curl q\in\V$ in~\eqref{eq:pbcont}). 
Hence $(\lambda,\bfu)$ solves~\eqref{eq:kikcont} with $p=0$.

Conversely, if $(\lambda,\bfu)$ solves~\eqref{eq:kikcont}, then necessarily
$p=0$. Indeed, we can take $\bfv=\Curl p\in\V$ in~\eqref{eq:kikcont} and
it follows $\|\Curl p\|_{0}^2=\lambda(\bfu,\Curl p)=0$, that is $p=0$ due to the
boundary conditions. 
It remains to show that $\lambda$ is different from zero.
If not, we would have from the first equation in~\eqref{eq:kikcont} that
$\Div\bfu=0$ which, together with $\Rot\bfu=0$ (consequence of the second
equation in\eqref{eq:kikcont}) implies $\bfu=0$ that is not allowed.
\end{proof}

We recall the de Rham complex related to the mixed formulation we
have just presented, which in two dimensions reads as follows
\begin{equation}
\minCDarrowwidth25pt
\begin{CD}
0@>>>\Honezero@>\Curl>>\V@>\Div>>L^2_0(\Omega)@>>>0.
\end{CD}
\label{eq:derham}
\end{equation}
This horizontal line is exact in the sense that the range of an operator 
in the sequence coincide with the kernel of the next one.  This means 
that the range of the $\Curl$ operator is equal to the kernel of the 
divergence operator. 
The first zero means that the $\Curl$ is injective 
and the last one means the $\Div$ operator is surjective.

%
In the analysis of eigenvalue problems it is useful to introduce the solution
operator, which is in general defined by means of the associated source
problem that in our case reads:
find $(\bfu,p)\in \V\times Q$ such that
\begin{equation}
\left\{
\aligned
&(\Div\bfu,\Div\bfv)+(\bfv,\Curl p)=(\bff,\bfv) &&\forall\bfv\in\V,\\
&(\bfu,\Curl q)=0 &&\forall q\in Q,
\endaligned
\right.
\label{eq:kikcont_source}
\end{equation}
where the source $\bff$ replaces $\lambda \bfu$ in~\eqref{eq:kikcont}. 
Following the convention in~\cite{BBG2}, this mixed formulation is a problem of
the type
$\begin{pmatrix}
\bff\\
0 
\end{pmatrix}$
with $\bff\in \Ltwovecomega$ being the source. 

Then, we define the solution operator $T:\Ltwovecomega\to\Ltwovecomega$ as
follows: 
\begin{equation}
\label{eq:defT}
\text{for any }\bff\in\Ltwovecomega,\ T\bff=\bfu,
\end{equation}
with $\bfu$ being the first component of the solution of~\eqref{eq:kikcont_source}.

The operator $T$ is compact, since the first component $\bfu$ of the solution
to~\eqref{eq:kikcont_source} belongs to $\V\cap\Hroto$, which is compactly
embedded in $\Ltwovecomega$ thanks to Lemma~\ref{lem:Hodiv_compactness_L2}. 

In the rest of this section we discuss existence and uniqueness of the solution
of~\eqref{eq:kikcont_source}.

Since the second equation in~\eqref{eq:kikcont_source} is equivalent to
requiring that the solution $\bfu\in\V$ is rotation free, we define  
the kernel associated to the rotation operator as
\[
\Kerrot := \{\bfv \in \V: (\bfv,\Curl q) =  0  \quad \forall q \in Q \}.
\]

It is well known that two conditions are necessary and sufficient for the
solvability of a mixed system, namely the ellipticity in the kernel and the
inf-sup condition~\cite{bbf}. In our case, these are the ellipticity of the
bilinear form $(\Div\bfu,\Div\bfv)$ in the kernel of the rot operator
$\Kerrot$ and the inf-sup condition for the bilinear form $(\bfv,\Curl q)$. 

The bilinear from $(\Div \bfu, \Div \bfv)$ is coercive in $\Kerrot$. 
This can be easily seen as a consequence of the Friedrichs inequality. 
Indeed, 
\[
(\Div \bfv,\Div \bfv) = \|\Div \bfv\|_0^2 \ge \ C_{F} \|\bfv\|_0^2 
\quad\forall \bfv \in \Kerrot.
\]
Hence, there exists an ellipticity constant $\alpha = \frac{1}{2}\min(C_{F},1)$
such that,
\[
(\Div \bfv,\Div \bfv) \ge \alpha \|\bfv\|_{\Div}^2 
\quad\forall \bfv \in \Kerrot.
\]
By the Poincar\'e inequality 
\begin{equation}
\label{eq:Poincare_curl}
\|\Curl q\|_0 \ge C \|q\|_1  \quad \forall q \in Q,
\end{equation}
and the definition of the bilinear form, given $q\in Q$, we can choose
$\bfv = \Curl q \in \V $ and with the use of~\eqref{eq:Poincare_curl}, we get
\[
\sup_{\bfv \in \V} \frac{(\bfv, \Curl q)}{\|\bfv\|_{\Div}} 
\ge
\|\Curl q\|_0
\ge 
C \|q\|_1 \quad \forall q\in Q,
\]
which gives
\[
\inf_{q\in Q}\sup_{\bfv\in \V} \frac{(\bfv, \Curl q)}{\|\bfv\|_{\Div} \|q\|_1}
\ge \beta, 
\]
where $\beta=C$ is the inf-sup constant.


\section{The virtual element discretization} 
\label{sec:the_virtual_element_discretization}
  

In this section we briefly define the virtual element space introduced
in~\cite{Rodolfo}. First we recall the basic assumptions on the mesh, then we
describe the VEM space and state the discretized version of
Problems~\eqref{eq:pbcont} and~\eqref{eq:kikcont}.

Let $\{\T_h\}$ be a family of finite decomposition of the domain $\Omega$ into 
non-overlapping polygonal elements $E$.
We denote by $h_E$ the diameter of 
$E$, by $h_e$ the length of the edge $e\subset\partial E$ and by $h$ the mesh
size, that is the maximum of $h_E$ for $E\in\T_h$.

We suppose that for all meshes there exist a constant $C_\tau>0$ such that for
every element $E\in\T_h$ and every $\T_h$, the following standard assumptions
hold true: each element is star-shaped with respect to a disk of radius
greater than $C_\tau h_E$; the ratio between the shortest edge $e$ of $E$ and
the diameter $h_E$ is greater than $C_\tau$, that is $h_e\ge C_\tau h_E$.

Let $\omega$ be a subset of $\RE^2$, for a non-negative integer $k\geq0$,
we denote by $\Po_{k}(\omega)$ the space of polynomials of degree up to $k$
in $\omega$.
We consider the following local finite dimensional space in $E$
introduced in~\cite{Rodolfo} and inspired by~\cite[Remark~6.3]{basic_mixed}:
\begin{equation}
\label{def:VEM_E_space}
\aligned
\VE :=&\{\bfv_h\in\HdivE\cap\HrotE:\
\bfv_h\cdot\bfn\in \Po_k(e)\ \forall e\in\partial E,\\
&\qquad\qquad\qquad\Div\bfv_h\in\Po_k(E),\ 
\Rot\bfv_h=0 \ \text{ in }E\}.\\
\endaligned
\end{equation}
A function $\bfv_h\in\VE$ is uniquely determined by the following degrees of
freedom:
\[
\aligned
& \int_e(\bfv_h\cdot\bfn)q\,dS
&&\quad\forall q\in\Po_k(e),\quad\forall e\subset\partial E,\\
& \int_E \bfv_h\cdot\Grad q\,d\bfx
&&\quad\forall q\in\Po_k(E)\slash\RE.
\endaligned
\]
The global virtual element space is obtained by ensuring the 
continuity of the normal components of the local spaces, that is
\begin{equation}
\label{eq:VEM-space}
\Vh :=\{\bfv_h\in\V:\bfv_h|_E\in\VE\ \forall E\in\T_h\}.
\end{equation} 
%

In view of the discrete counterparts of Problems~\eqref{eq:pbcont}
and~\eqref{eq:kikcont}, we define the discrete version of the bilinear forms
given in~\eqref{eq:pbcont}. 

We observe that the left hand side of~\eqref{eq:pbcont} can be computed exactly
since $\Div\bfv_h$ is a polynomial of degree $k$ in each element
for $\bfv_h \in\VE$. Hence we do not need to introduce any projection
operator, nor to stabilize the left hand side of our problem.
On the other hand, the right hand side contains purely virtual components and
needs to be carefully dealt. 
A standard ingredient for the numerical approximation is the construction of a
discrete bilinear form $\bho(\cdot,\cdot)$ which replaces the $\Ltwovecomega$
scalar product.
As is usual in the framework of the virtual element method,
a suitable projection operator is introduced, which allows us to compute the
discrete $\bho$. Since the elements of our local space are rotation free, they
can be represented as gradients. Therefore, \cite{Rodolfo} introduced
the operator $\PE$ on each element $E$ as the $\Ltwovec$ orthogonal
projection operator onto the space of gradients of polynomials of degree $k+1$,
that is:
\begin{equation}
\label{eq:PiE}
\aligned
&\PE: \Ltwovec \rightarrow \Grad(\Po_{k+1}(E))\subset\VE, \\
&(\PE\bfv - \bfv, \Grad q)_E=0 \quad\forall q\in\Po_{k+1}(E).
\endaligned
\end{equation} 
Then the local discrete bilinear form $\bE(\cdot,\cdot)$ on each $E$, is
defined by
\begin{equation}
\label{eq:localBE}
\bE(\bfu_h,\bfv_h):=(\PE\bfu_h,\PE\bfv_h)_E \quad\forall\bfu_h,\bfv_h\in\VE,
\end{equation}
and in a natural way, we sum up the local discrete bilinear forms to
obtain the global form
\begin{equation}
\label{eq:global_bh}
\bho(\bfu_h,\bfv_h):=\sum_{E\in\T_h}\bE(\bfu_h,\bfv_h)
\quad\forall\bfu_h,\bfv_h\in\Vh.
\end{equation}
We observe that $\bho(\bfv_h,\bfv_h)\ge 0$ for all $\bfv_h\in\Vh$, so that we
can associate it with the following seminorm which will be useful in our
analysis:
\[
|\bfv_h|_{h,0}^2:=\bho(\bfv_h,\bfv_h)=\sum_{E\in\T_h}\bE(\bfv_h,\bfv_h)
            = \sum_{E\in\T_h}\|\PE\bfv_h\|_{0,E}^2.
\]

\subsection{The discrete variational formulation}
\label{sec:pencil}
The discretization of~\eqref{eq:pbcont} consists in finding
$(\lambda_h,\bfu_h)\in \RE\times \Vh$ with $\bfu_h\ne \bfzero$ such that 
\begin{equation}
\left\{
\aligned
&(\Div\bfu_h,\Div\bfv_h)=\lambda_h\bho(\bfu_h,\bfv_h) &&\forall\bfv_h\in\Vh,\\
&\lambda_h\ne0.
\endaligned
\right.
\label{eq:pbdisc0}
\end{equation}
The algebraic system associated with the discrete eigenvalue problem has the
form
\[
\m{A}\m{x}=\lambda\m{B}\m{x}
\]
with $\m{A}$ and $\m{B}$ symmetric and positive semidefinite matrices of
dimension $N_h=\dim{\Vh}$. 
Notice that this algebraic eigenvalue problem is parameter free.
This is due to the fact that the bilinear form $(\Div\bfu_h,\Div\bfv_h)$ can be
computed exactly using the degrees of freedom 
and that the bilinear form $\bho(\bfu_h,\bfv_h)$ which corresponds
to the matrix $\m{B}$ does not depend on any parameter.

In practice, there might exist a $\bfw_h\in\Vh$ so that $\bho(\bfw_h,\bfw_h)=0$
with $\bfw_h\neq \bfzero$ (note that in this case we have also
$\bho(\bfw_h,\bfv_h)=0$ for all $\bfv_h\in\Vh$). This is not an issue for the
problem defined in~\eqref{eq:pbdisc0} unless it happened that $\Div\bfw_h=0$
as well.
In such case the eigenvalue $\lambda_h$ would not be determined by the
equation and we would be in presence of a singular pencil.

%
In order to better describe this issue, we can introduce the kernels of the
matrices $\m{A}$ and $\m{B}$ which in this case are defined as follows:
\[
\aligned
&\kerdiv := \{\bfv_h \in \Vh : \Div \bfv_h = 0 \},\\
&{\kerbh}_{,0} :=\{\bfv_h\in\Vh:\bho(\bfv_h,\bfv_h)=0\},
\endaligned
\]
and their intersection
\[
\inter := \kerdiv\cap{\kerbh}_{,0}.
\]
In order to avoid degeneracy of eigenvalues it is needed that
\begin{equation}
\label{eq:inter_kernel}
\inter=\{\m{0}\}.
\end{equation}
Due to the definition of our discrete space $\Vh$ this can be achieved if, for
example, ${\kerbh}_{,0}=\{\m{0}\}$.
We shall prove that in some cases, this condition is actually satisfied with
the definition of $\bho^E$ given above. 
Otherwise, generally, one way to obtain it
is to use a stabilized bilinear form as it is custom in the virtual element
method. 

For $E\in\T_h$, let $\SE(\cdot,\cdot)$ be any symmetric positive definite
bilinear form such that there exists positive constants $\underline{c}$ and
$\overline{c}$ such that
\begin{equation}
\label{eq:bound_SE}
\underline{c}\|\bfv_h\|_{0,E}^2\le \SE(\bfv_h,\bfv_h)\le
\overline{c}\|\bfv_h\|_{0,E}^2\qquad\forall\bfv_h\in\VE.
\end{equation}
Then we define the local stabilized bilinear form for all $\bfu_h,\bfv_h\in\VE$
as follows
\begin{equation}
\label{eq:stab_bh_loc}
\bhs^E(\bfu_h,\bfv_h)
:=\bho^E(\bfu_h,\bfv_h)+\SE(\bfu_h-\PE\bfu_h,\bfv_h-\PE\bfv_h),
\end{equation}
and the global stabilized bilinear form reads
\begin{equation}
\label{eq:stab_bh}
\bhs(\bfu_h,\bfv_h):=\sum_{E\in\T_h} \bhs^E(\bfu_h,\bfv_h) 
\qquad\forall\bfu_h,\bfv_h\in\Vh.
\end{equation}
With the above definitions, it follows that $\bhs(\cdot,\cdot)$ is equivalent
to the $\mathbf{L}^2$-norm (see~\cite{Rodolfo}), indeed there exist two
positive constants $\underline{\beta}$ and $\overline{\beta}$ such that
\[
\underline{\beta}\|\bfv_h\|_{0,E}^2\le \bhs(\bfv_h,\bfv_h)\le
\overline{\beta}\|\bfv_h\|_{0,E}^2\qquad\forall\bfv_h\in\VE.
\]

\begin{remark}
Another way to circumvent the degeneracy of eigenvalues consists in discarding
from the space $\Vh$ the elements in $\inter$. Namely, 
let us denote by $\ell$ the dimension of $\inter$ and form a basis in $\Vh$
consisting of $\dim(\Vh)-\ell$ elements not in $\inter$ and $\ell$ elements in
$\inter$. We denote by $\Vhh$ the space generated by the $\dim(\Vh)-\ell$
elements not in $\inter$.
In practice, nobody would like to implement the space $\Vhh$, 
however, one can use $\Vh$ and perhaps discard the spurious modes arising
from the degeneracy. 
\end{remark}

For ease of notation, from now on we denote by $\bh$ either the bilinear form
$\bho$ defined in~\eqref{eq:global_bh} or the stabilized one $\bhs$ given
in~\eqref{eq:stab_bh}.
The $0$ in $\bho$ and the $s$ in $\bhs$ mean non-stabilized and stabilized,
respectively.
The specific choice will be made precise when needed.
Moreover, we denote by $|\cdot|_h$ the associated discrete seminorm that is
\begin{equation}
\label{eq:seminormh}
|\bfv_h|^2_h:=\bh(\bfv_h,\bfv_h).
\end{equation}
%
Analogously, we will extend the use of the notation of its kernel as
\[
\kerbh:=\{\bfv_h\in\Vh:\bh(\bfv_h,\bfv_h)=0\}.
\]

Hence, the discrete problem we analyze is associated with the space $\V_h$
defined in~\eqref{eq:VEM-space} and reads as follows: find
$(\lambda_h, \bfu_h)\in\RE\times\Vh$ with $\bfu_h\ne\bf0$ such that
\begin{equation}
\left\{
\aligned
&(\Div\bfu_h,\Div\bfv_h)=\lambda_h b_h(\bfu_h,\bfv_h) &&\forall\bfv_h\in\Vh,\\
&\lambda_h\ne0.
\endaligned
\right.
\label{eq:pbdisc}
\end{equation}

Problem~\eqref{eq:pbdisc} admits exactly $N_h=\dim(\Vh)-\dim(\kerdiv)$
discrete eigenvalues
$\lambda_{i,h}$, $i=1,\dots,N_h$ if $\kerbh=\{\m{0}\}$, with discrete
eigenfunctions satisfying the following orthogonality properties
\[
\aligned
&\bh(\bfu_{i,h},\bfu_{j,h})=0,\quad (\Div\bfu_{i,h},\Div\bfu_{j,h})=0,
\text{ if }i\ne j,\\
&\bh(\bfu_{i,h},\bfu_{i,h})=1,\quad \|\Div\bfu_{i,h}\|^2_0=\lambda_{i,h}.
\endaligned
\]
Notice that in this case the eigenfunctions are orthogonal with respect to the
mesh dependent form $\bh$ instead of the more standard $\mathbf{L}^2$ scalar
product.

\subsection{Discrete mixed formulation} 
In order to define the discrete counterpart of~\eqref{eq:kikcont} that we shall
use for the analysis, we introduce a finite dimensional subspace $\Qh$ of
$Q=\Honezero$.
Let $\Vh\subset\V=\Hodiv$ be the space defined above and let $\Qh\subset Q$
be any space such that
\[
\Curl\Qh\subset\Vh.
\]
Given the discrete space $\Vh$, the space $\Qh$ can be constructed as follows.
We consider the kernel of the div operator in $\Vh$. This is what we called
$\kerdiv$.  Each element of $\kerdiv\subset\Vh$ can be represented as the
$\Curl$ of a unique function in $Q$ thanks to the boundary conditions of $\V$.
We define $\Qh$ as the subspace of $Q$ containing all such functions, that is
\begin{equation}
\label{eq:defQh}
  \Curl Q_h = \kerdiv\subset\Vh,
\end{equation}
which is the compatibility assumption on the discrete spaces $\Qh$ and $\Vh$. 

In particular, in our framework, we have the following diagram
\[
\begin{CD}
Q@>\Curl>>\V\\
@VVV @VVV\\
\Qh@>\Curl>>\Vh
\end{CD}
\]

\begin{remark}
  The space $Q_h$ is only used for the analysis, 
  and will not be implemented in our numerical experiments. 
\end{remark}

The discrete mixed formulation of~\eqref{eq:kikcont} then reads:
find $(\lambda_h, \bfu_h)\in\RE\times\Vh$ with $\bfu_h\ne \bfzero$ 
such that for some $p_h\in\Qh$ it holds
\begin{equation}
\left\{
\aligned
&(\Div\bfu_h,\Div\bfv_h)+\bh(\bfv_h,\Curl p_h)=\lambda_h\bh(\bfu_h,\bfv_h) &&
\forall\bfv_h\in\Vh,\\
&\bh(\bfu_h,\Curl q_h)=0 &&\forall q_h\in\Qh.
\endaligned
\right.
\label{eq:kikdisc}
\end{equation}
The rotation free constraint is now substituted by the second equation
in~\eqref{eq:kikdisc} and we define the associated discrete kernel as
\[
\Kbhrot := \{\bfv_h \in \Vh : b_h(\bfv_h,\Curl q_h) =  0
\quad \forall q_h \in Q_h \}. 
\] 
Note that in general $\Kbhrot\not\subset \Kerrot$.

The next proposition shows the equivalence between 
the discrete mixed formulation~\eqref{eq:kikdisc} and 
the discrete variational formulation~\eqref{eq:pbdisc}.
\begin{proposition}
Let us assume that $\kerbh=\{\m{0}\}$.
Let the pair $(\lambda_h,\bfu_h)$ be an eigensolution of~\eqref{eq:pbdisc}, then
$(\lambda_h,\bfu_h)$ solves~\eqref{eq:kikdisc} with $p_h=0$.
Conversely, let $(\lambda_h,\bfu_h)$ solve~\eqref{eq:kikdisc} for
some $p_h$, then $(\lambda_h,\bfu_h)$ solves~\eqref{eq:pbdisc}.

\end{proposition}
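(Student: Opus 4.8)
The plan is to mirror the continuous equivalence argument of Proposition~\ref{pr:cont}, adapting each step to the discrete spaces and the mesh-dependent form $\bh$. The overall strategy has three parts: (i) show that a solution of~\eqref{eq:pbdisc} solves~\eqref{eq:kikdisc} with $p_h=0$; (ii) show that any solution of~\eqref{eq:kikdisc} forces $p_h=0$; and (iii) recover~\eqref{eq:pbdisc} from~\eqref{eq:kikdisc}. The hypothesis $\kerbh=\{\m{0}\}$ is what makes $\bh$ behave like a genuine inner product and will be used repeatedly.

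For the first direction, suppose $(\lambda_h,\bfu_h)$ solves~\eqref{eq:pbdisc}. I would test the first equation of~\eqref{eq:pbdisc} with $\bfv_h=\Curl q_h$ for an arbitrary $q_h\in\Qh$; by the compatibility assumption~\eqref{eq:defQh} this is a legitimate element of $\Vh$, and in fact lies in $\kerdiv$, so $\Div(\Curl q_h)=0$. The left-hand side $(\Div\bfu_h,\Div\Curl q_h)$ therefore vanishes, giving $\lambda_h\,\bh(\bfu_h,\Curl q_h)=0$; since $\lambda_h\ne0$, we conclude $\bh(\bfu_h,\Curl q_h)=0$ for all $q_h\in\Qh$, which is exactly the second equation of~\eqref{eq:kikdisc}. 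Setting $p_h=0$ makes the first equation of~\eqref{eq:kikdisc} coincide with that of~\eqref{eq:pbdisc}, so the pair solves~\eqref{eq:kikdisc}.

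For the converse, suppose $(\lambda_h,\bfu_h)$ solves~\eqref{eq:kikdisc} for some $p_h\in\Qh$. Following the continuous proof, I would test the first equation with $\bfv_h=\Curl p_h\in\Vh$. Again $\Div\Curl p_h=0$, so the $\Div$ term drops and the second equation~\eqref{eq:kikdisc} (with $q_h=p_h$) kills the right-hand side, leaving $\bh(\Curl p_h,\Curl p_h)=0$. This says $\Curl p_h\in\kerbh$, and by the assumption $\kerbh=\{\m{0}\}$ we get $\Curl p_h=\bfzero$; the Poincar\'e inequality~\eqref{eq:Poincare_curl} then forces $p_h=0$. With $p_h=0$ the first equation of~\eqref{eq:kikdisc} reduces to the first equation of~\eqref{eq:pbdisc}, and it only remains to verify $\lambda_h\ne0$: if $\lambda_h=0$ then $(\Div\bfu_h,\Div\bfv_h)=0$ for all $\bfv_h$, whence $\Div\bfu_h=0$, so $\bfu_h\in\kerdiv$; combined with the second equation of~\eqref{eq:kikdisc} this should force $\bfu_h=\bfzero$, contradicting $\bfu_h\ne\bfzero$.

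I expect the third part, ruling out $\lambda_h=0$, to be the delicate step, because it is the one place where the continuous argument does not transfer verbatim: at the continuous level one invokes $\Rot\bfu=0$ from the constraint, but discretely the constraint reads $\bh(\bfu_h,\Curl q_h)=0$ rather than a pointwise rotation-free condition, and as noted $\Kbhrot\not\subset\Kerrot$ in general. The key observation to close this gap is that $\Div\bfu_h=0$ means $\bfu_h\in\kerdiv$, and by~\eqref{eq:defQh} every element of $\kerdiv$ is of the form $\Curl q_h$ for some $q_h\in\Qh$; writing $\bfu_h=\Curl q_h$ and using the discrete constraint with this same $q_h$ gives $\bh(\bfu_h,\bfu_h)=0$, i.e.\ $\bfu_h\in\kerbh=\{\m{0}\}$, so $\bfu_h=\bfzero$ and the contradiction follows. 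This again leans essentially on the hypothesis $\kerbh=\{\m{0}\}$, confirming that it is the structural assumption carrying the whole equivalence.
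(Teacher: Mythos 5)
Your proof is correct, and its skeleton (test with $\Curl q_h$ in one direction, with $\Curl p_h$ in the other, then rule out $\lambda_h=0$ by contradiction) is the same as the paper's; in particular your treatment of the $\lambda_h=0$ case --- writing $\bfu_h=\Curl q_h$ via~\eqref{eq:defQh} and feeding it into the constraint to land in $\kerbh=\{\m{0}\}$ --- is exactly the paper's argument. The one place you diverge is the handling of the multiplier after obtaining $\bh(\Curl p_h,\Curl p_h)=0$: you invoke $\kerbh=\{\m{0}\}$ to conclude $\Curl p_h=\bfzero$ and then $p_h=0$ via~\eqref{eq:Poincare_curl}, whereas the paper never claims $p_h=0$ in the discrete converse (note its statement, unlike the continuous Proposition~\ref{pr:cont}, does not assert it). Instead the paper uses the Cauchy--Schwarz inequality for the positive semidefinite form $\bh$ to deduce $|\bh(\bfv_h,\Curl p_h)|\le|\bfv_h|_h\,|\Curl p_h|_h=0$ for all $\bfv_h\in\Vh$, so the multiplier term drops out of the first equation of~\eqref{eq:kikdisc} without touching the kernel hypothesis. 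Your variant buys a slightly stronger conclusion ($p_h=0$ itself) at the cost of using $\kerbh=\{\m{0}\}$ twice; the paper's variant is more economical with the hypothesis, reserving it solely for excluding $\lambda_h=0$ --- a distinction in the spirit of the paper's interest in what survives when the form $\bho$ is degenerate. Under the stated assumption both routes are equally valid.
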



\begin{proof}

As in the proof of Proposition~\ref{pr:cont}, if $(\lambda_h,\bfu_h)$ is a
solution of~\eqref{eq:pbdisc}, then by choosing $\bfv_h = \Curl q_h$ we 
have $b_h(\bfu_h,\Curl q_h)=0$ for all
$q_h\in\Qh$ from $\lambda_h\ne0$ and $\Curl\Qh\subset\Vh$. Hence
$(\lambda_h,\bfu_h)$ solves~\eqref{eq:kikdisc} for $p_h=0$.

Conversely, let $(\lambda_h,\bfu_h)$ be a solution of~\eqref{eq:kikdisc} for 
some $p_h\in\Qh$. 
Taking $\bfv_h=\Curl p_h$ in the first equation of~\eqref{eq:kikdisc}, we have,
recalling the definition of the seminorm associated to $\bh$,
\[
|\Curl p_h|_h^2=\bh(\Curl p_h,\Curl p_h)=\lambda_h\bh(\bfu_h,\Curl p_h)=0.
\]
It follows that
\[
|b_h(\bfv_h,\Curl p_h)|\le|\bfv_h|_h|\Curl p_h|_h=0 \quad\forall\bfv_h\in\Vh.
\]
Hence, it remains to show that $\lambda_h$ cannot be zero. 
By contradiction, let $\lambda_h=0$, then from the first equation
in~\eqref{eq:kikdisc} if follows that $\Div\bfu_h=0$, that is,
$\bfu_h\in\kerdiv$. 
Moreover, from~\eqref{eq:defQh}, there exists $q_h\in\Qh$ such that
$\Curl q_h=\bfu_h$ and thus, using the second equation in~\eqref{eq:kikdisc},
we get
\[
\bh(\bfu_h,\bfu_h) = | \bfu_h |^2_h = 0,
\]
that is, $\bfu_h\in\kerbh$, so that $\bfu_h=0$,
which contradicts the fact that it is an eigenfunction of~\eqref{eq:kikdisc}.
\end{proof}

We end this section by introducing the approximation of the source
problem~\eqref{eq:kikcont_source}: given $\bff\in \Ltwovecomega$, find
$(\bfu_h, p_h)\in\V_h \times Q_h$ such that
\begin{equation}
\left\{
\aligned
&(\Div\bfu_h,\Div\bfv_h)+\bh(\bfv_h,\Curl p_h)=\bh(\bff,\bfv_h) &&
\forall\bfv_h\in\Vh,\\
&\bh(\bfu_h,\Curl q_h)=0 &&\forall q_h\in\Qh.
\endaligned
\right.
\label{eq:kikdisc_source}
\end{equation}
Then the discrete solution operator is given by
\begin{equation}
\label{eq:defTh}
\aligned
&T_h:\Ltwovecomega\to\Ltwovecomega,\\ 
&T_h\bff=\bfu_h\in\Vh,\\
\endaligned
\end{equation}
with $\bfu_h$ being the first component of the solution
of~\eqref{eq:kikdisc_source}.
\section{Spectral approximation and convergence analysis}
\label{sec:abstract_theory}

In this section we discuss the spectral approximation for the problem under
consideration. 
In particular, we analyze the convergence of the spectrum using the mixed
formulation presented previously. To this aim,
we are going to use the theory developed in~\cite{SDCP}.

\subsection{Approximation properties of the VEM space and other preliminary
results}
We start by recalling and proving some approximation properties for the
discrete space $\Vh$. The first one corresponds to~\cite[Lemma~8]{Rodolfo}
\begin{lemma}
\label{lem:estPE}
There exists a constant $C>0$ such that for $p\in H^{1+s}(\Omega)$ with 
$1/2<s\le k+1$, it holds
\[
\|\Grad p-\PE(\Grad p)\|_{0,E}\le C h^s_E\|\Grad p\|_{s,E} \quad \forall E\in\T_h.
\]
\end{lemma}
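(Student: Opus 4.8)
The plan is to prove this approximation estimate by exploiting the fact that $\PE$ is the $\mathbf{L}^2(E)$-orthogonal projection onto $\Grad(\Po_{k+1}(E))$, and therefore is the \emph{best approximation} of $\Grad p$ within this space with respect to the $\|\cdot\|_{0,E}$ norm. Consequently, for any choice of $r \in \Po_{k+1}(E)$ we have
\[
\|\Grad p - \PE(\Grad p)\|_{0,E} \le \|\Grad p - \Grad r\|_{0,E}.
\]
The task then reduces to finding a single convenient polynomial $r$ whose gradient approximates $\Grad p$ well, and bounding the right-hand side. This is the standard technique for VEM projection estimates: replace the projection error by an interpolation/approximation error that can be controlled by classical polynomial approximation theory.

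First I would invoke a standard polynomial approximation result (Bramble--Hilbert type, or the Dupont--Scott theory on star-shaped domains, which is available thanks to the mesh regularity assumption that each $E$ is star-shaped with respect to a disk of radius $\ge C_\tau h_E$). Since $p \in H^{1+s}(E)$ with $1/2 < s \le k+1$, there exists a polynomial $r \in \Po_{k+1}(E)$ such that
\[
\|p - r\|_{1,E} \le C\, h_E^{s}\, |p|_{1+s,E},
\]
and more precisely the seminorm estimate controlling the gradient reads
\[
|p - r|_{1,E} \le C\, h_E^{s}\, |p|_{1+s,E} \le C\, h_E^{s}\, \|\Grad p\|_{s,E}.
\]
Here the constant $C$ depends only on $k$, $s$, and the shape-regularity constant $C_\tau$, but not on $h_E$. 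The equality $|p-r|_{1,E} = \|\Grad(p-r)\|_{0,E}$ then immediately gives the bound $\|\Grad p - \Grad r\|_{0,E} \le C\, h_E^{s}\, \|\Grad p\|_{s,E}$.

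Combining the best-approximation property of $\PE$ with this polynomial approximation estimate closes the argument:
\[
\|\Grad p - \PE(\Grad p)\|_{0,E} \le \|\Grad p - \Grad r\|_{0,E} \le C\, h_E^{s}\, \|\Grad p\|_{s,E},
\]
which is exactly the claimed inequality.

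The main obstacle, and the step requiring the most care, is the precise invocation of the scaled polynomial approximation estimate with a \emph{fractional} Sobolev index $s$ and a constant uniform in $h_E$. One must verify that the Dupont--Scott (or equivalent Bramble--Hilbert) theory applies on domains that are star-shaped with respect to a ball, which is guaranteed here by the mesh assumptions, and that the scaling in $h_E$ is correct for fractional orders. A clean way to handle the scaling rigorously is to map to a reference element of unit size, apply the approximation result there (where the constant is fixed), and then scale back, tracking the homogeneity of the fractional seminorm $|\cdot|_{1+s,E}$ under dilation. The remaining manipulations — the orthogonal-projection best-approximation inequality and the identification of the $H^1$-seminorm with the $L^2$-norm of the gradient — are entirely routine.
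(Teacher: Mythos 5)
Your proposal is correct and is essentially the intended argument: the paper itself does not prove this lemma but recalls it from~\cite[Lemma~8]{Rodolfo}, and the proof there is exactly the one you give, namely the best-approximation property of the $\mathbf{L}^2(E)$-orthogonal projection $\PE$ onto $\Grad(\Po_{k+1}(E))$ combined with scaled Bramble--Hilbert/Dupont--Scott polynomial approximation on elements star-shaped with respect to a disk of radius comparable to $h_E$. Your attention to the fractional index $s$ (handled by scaling to a reference element and the identification $|p-r|_{1,E}=\|\Grad(p-r)\|_{0,E}$, $|p|_{1+s,E}\le\|\Grad p\|_{s,E}$) is precisely the point that makes the citation legitimate, so there is no gap.
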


The next one deals with the interpolant $\bfv^I\in\Vh$ which,
following~\cite{Rodolfo}, is defined using the degrees of freedom introduced
above. Let $\bfv\in\V$ be such that $\bfv|_E\in\mathbf{H}^s(E)$ for some
$s>1/2$ and $E\in\T_h$, so that its trace along each edge of $E$ is well
defined, then for all $E\in\T_h$, $\bfv^I\in\Vh$ satisfies
\[
\aligned
&((\bfv-\bfv^I)\cdot\bfn,q)_e=0&&\forall q\in\Po_k(e),\ \forall
e\subset\partial E \text{ with } e\not\subset\partial\Omega,\\
&(\bfv-\bfv^I,\Grad q)_E=0&&\forall  q\in\Po_k(E)\setminus\RE.
\endaligned
\] 
Let $P_k$ be the $L^2$-projection operator from $L^2(\Omega)$ onto the subspace
of $L^2(\Omega)$ consisting of piecewise discontinuous polynomials of degree
$k$ on each element $E\in\T_h$. Then for $\bfv\in\mathbf{H}^s(\Omega)$ with
$s>1/2$, we have
\[
\Div\bfv^I=P_k(\Div\bfv).
\]

For our analysis we need a suitable modification of~\cite[Lemma~7]{Rodolfo},
which relies on an estimate for the interpolation error $\|\bfv-\bfv^I\|$
in $\Vh$ (see~\cite[Lemma 6]{Rodolfo}) which is not true in general.
Since we only need to approximate gradients correctly, the following amended 
statement of~\cite[Lemma 6]{Rodolfo} can be proved.
\begin{lemma}
\label{lem:interpol_est}
Let $\bfv\in\V$ be a gradient $\bfv=\Grad p$ and satisfy the regularity
assumption $\bfv\in\mathbf{H}^s(\Omega)$ with $s>1/2$. The interpolant $\bfv^I$
satisfies for all $E\in\T_h$
\[
\aligned
&\|\bfv-\bfv^I\|_{0,E}\le Ch_E^s\|\bfv\|_{s,E}, &&1\le s\le k+1,\\
&\|\bfv-\bfv^I\|_{0,E}\le C(h_E^s\|\bfv\|_{s,E}+h_E\|\Div\bfv\|_{0,E}),
&&1/2<s\le1.
\endaligned
\]
\end{lemma}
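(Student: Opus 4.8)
The plan is to compare $\bfv^I$ against a piecewise polynomial gradient. On each $E$ I would fix $p_\pi\in\Po_{k+1}(E)$ to be an averaged Taylor (Dupont--Scott) polynomial of $p$ and set $\bfv_\pi:=\Grad p_\pi$. Being a rotation-free polynomial of degree $\le k$ with polynomial divergence and polynomial normal traces, $\bfv_\pi\in\VE$, so the local interpolant reproduces it. Writing $\bfw:=\bfv-\bfv_\pi$ and using linearity of the interpolation operator, the triangle inequality gives
\[
\|\bfv-\bfv^I\|_{0,E}\le\|\bfw\|_{0,E}+\|\bfz\|_{0,E},\qquad \bfz:=\bfv_\pi-\bfv^I\in\VE .
\]
Standard polynomial approximation bounds the first term by $Ch_E^s|\bfv|_{s,E}$ for all $s\in(1/2,k+1]$, so the whole argument reduces to estimating $\bfz\in\VE$.

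The core step --- and the place where the restriction to gradients is essential --- is an energy estimate for $\bfz$. Since every function in $\VE$ is rotation free and $E$ is star-shaped, $\bfz=\Grad\phi$ for a (virtual) scalar potential $\phi$, which I may take with zero mean. Integrating by parts and testing the resulting Neumann problem with $\phi$ itself yields
\[
\|\bfz\|_{0,E}^2=(\Grad\phi,\Grad\phi)_E=-(\Div\bfz,\phi)_E+(\bfz\cdot\bfn,\phi)_{\partial E},
\]
and then the scaled Poincar\'e inequality $\|\phi\|_{0,E}\le Ch_E\|\Grad\phi\|_{0,E}$ together with a scaled trace inequality $\|\phi\|_{0,\partial E}\le Ch_E^{1/2}\|\Grad\phi\|_{0,E}$ (both with constants depending only on $C_\tau$) give
\[
\|\bfz\|_{0,E}\le C\left(h_E\|\Div\bfz\|_{0,E}+h_E^{1/2}\|\bfz\cdot\bfn\|_{0,\partial E}\right).
\]
Here I would use the degrees of freedom to identify the data: $\Div\bfz=-P_k(\Div\bfw)$ and, on each edge, $\bfz\cdot\bfn=-\Pi^k_e(\bfw\cdot\bfn)$ with $\Pi^k_e$ the $L^2(e)$-projection onto $\Po_k(e)$; since both projections are contractions, $\|\Div\bfz\|_{0,E}\le\|\Div\bfw\|_{0,E}$ and $\|\bfz\cdot\bfn\|_{0,\partial E}\le\|\bfw\cdot\bfn\|_{0,\partial E}$.

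It then remains to bound the two data terms, and this is where the two regimes of $s$ separate. For the boundary contribution I would apply the scaled trace inequality to $\bfw$, obtaining $h_E^{1/2}\|\bfw\cdot\bfn\|_{0,\partial E}\le C(\|\bfw\|_{0,E}+h_E^s|\bfw|_{s,E})\le Ch_E^s\|\bfv\|_{s,E}$, valid for every $s>1/2$ by the $H^s$-stability and approximation properties of $p_\pi$. For the divergence contribution, when $1\le s\le k+1$ I would write $\Div\bfw$ as a first derivative of $\bfw$ and use $h_E\|\Div\bfw\|_{0,E}\le h_E|\bfw|_{1,E}\le Ch_E^s|\bfv|_{s,E}$, so no divergence term survives. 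When $1/2<s\le1$ this second-derivative estimate is unavailable; instead I would exploit the commuting property of the averaged Taylor polynomial, $\Div\bfv_\pi=\Delta p_\pi$ with $\|\Delta p_\pi\|_{0,E}\le C\|\Delta p\|_{0,E}=C\|\Div\bfv\|_{0,E}$, to get $\|\Div\bfw\|_{0,E}\le\|\Div\bfv\|_{0,E}+\|\Div\bfv_\pi\|_{0,E}\le C\|\Div\bfv\|_{0,E}$, which produces precisely the extra term $Ch_E\|\Div\bfv\|_{0,E}$. Collecting the estimates yields the two claimed inequalities.

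The main obstacle I anticipate is the energy estimate of the second paragraph: it hinges on $\bfz$ being a gradient so that a single scalar Neumann problem controls $\|\bfz\|_{0,E}$, and it requires the Poincar\'e and trace constants to be uniform over the mesh family, which is where the star-shapedness and edge-ratio assumptions on $\T_h$ enter. This is also the reason the unrestricted version of \cite[Lemma~6]{Rodolfo} fails: for a general $\bfv\in\V$ the interpolant need not be a gradient, and its $\mathbf{L}^2$ error cannot be captured by the divergence and normal-trace data alone. A secondary technical point is justifying the commuting/stability estimate $\|\Delta p_\pi\|_{0,E}\le C\|\Div\bfv\|_{0,E}$ in the low-regularity range, for which the averaged Taylor construction (rather than a generic best-approximation polynomial) is the convenient choice.
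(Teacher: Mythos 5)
Your proposal is correct and is essentially the paper's own argument: the paper's proof consists precisely of your opening observation---that the interpolation operator reproduces $\bfv_\pi=\Grad p_{k+1}$ for $p_{k+1}\in\Po_{k+1}(E)$ (the corrected form of the false claim in \cite[Lemma~6]{Rodolfo} that all of $[\Po_k(E)]^2$ is reproduced, which fails because such polynomials are generally not rotation free and hence not in $\VE$)---and then invokes the rest of the proof of that reference unchanged, which is the same potential/energy estimate, degree-of-freedom identification, and scaled Poincar\'e/trace bounds that you reconstruct. The only step requiring extra care is your bound $\|\Delta p_\pi\|_{0,E}\le C\|\Div\bfv\|_{0,E}$ in the range $1/2<s\le1$, where $p\notin W^{2,1}(E)$ so the standard commuting property of averaged Taylor polynomials does not apply verbatim; it can be justified by a density argument on the star-shaped element, or sidestepped entirely by choosing $\bfv_\pi$ constant in that low-regularity regime (then $\Div\bfv_\pi=0$ and the extra term $h_E\|\Div\bfv\|_{0,E}$ appears directly), a point you yourself flagged.
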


\begin{proof}

In the proof of~\cite[Lemma~6]{Rodolfo} it is wrongly stated that for all
$\bfv_k\in[\Po_k(E)]^2$ it holds $(\bfv_k)^I=\bfv_k$ (see first line of
page~761). However, this property is true whenever $\bfv_k=\Grad p_{k+1}$ with
$p_{k+1}\in\Po_{k+1}(E)$. The rest of the proof works with no modifications.

\end{proof}
We now provide a modification of the proof of~\cite[Lemma~7]{Rodolfo} to fit
our case.
There are two main differences between our situation and the one considered
in~\cite{Rodolfo}. We are looking for solutions $\bfu_h\in\Kbhrot$, that is
they satisfy the following orthogonality $\bh(\bfu_h,\Curl q_h)=0$ for all
$q_h\in\Qh$ but they might not be $L^2$-orthogonal to $\Curl(\Qh)$.
Moreover, we want also to cover the case of $\bh=\bho$, without stabilization.
\begin{lemma}
\label{lem:lem7_rodo}
Let us assume that the seminorm $|\cdot|_h$ is equivalent to the $L^2(\Omega)$
norm, that is,
$\underline{c}\|\bfv\|^2_0\le|\bfv|^2_h\le \overline{c}\|\bfv\|^2_0$ for all
$\bfv\in\Ltwovecomega$.
Moreover, let $\bfv_h$ be an element of $\Kbhrot$, that is 
$\bh(\bfv_h,\Curl q_h)=0$ for all $q_h\in\Qh$.
Then a continuous Helmholtz decomposition 
$\bfv_h=\Grad p+\bfpsi$ can be written with
$p\in H^{1+s}(\Omega)$ ($1/2<s\le 1$), $\bfpsi\in\K^{\Div}$, and
\[
\aligned
&\|\Grad p\|_{s}\le C\|\Div\bfv_h\|_{0},\\
&\|\bfpsi\|_{0}\le Ch^s\|\Div\bfv_h\|_{0}.
\endaligned
\]

\label{eq:lem7_rodolfo_decomposition}
\end{lemma}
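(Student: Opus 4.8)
The plan is to start from the continuous Helmholtz decomposition of Lemma~\ref{lem:lem2_rodolfo} applied to $\bfv_h\in\Vh\subset\V$. Since $\bfv_h\in\V=\Hodiv$, Lemma~\ref{lem:lem2_rodolfo} gives an orthogonal splitting $\bfv_h=\Grad p+\bfpsi$ with $\Grad p\in(\G\cap\V)$ and $\bfpsi\in\K^{\Div}$, together with the regularity and stability bound $\|\Grad p\|_{s}\le C\|\Div\bfv_h\|_{0}$ for some $s\in(1/2,1]$. Because the decomposition is orthogonal in $\Hdiv$ and $\bfpsi$ is divergence-free, we have $\Div\bfv_h=\Div(\Grad p)$, so $p$ solves the Neumann problem associated with $\Div\bfv_h$; the regularity $p\in H^{1+s}(\Omega)$ and the first displayed estimate $\|\Grad p\|_{s}\le C\|\Div\bfv_h\|_{0}$ are then immediate from Lemma~\ref{lem:lem2_rodolfo}. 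So the entire difficulty is concentrated in the second estimate, the bound $\|\bfpsi\|_{0}\le Ch^s\|\Div\bfv_h\|_{0}$ for the divergence-free remainder.

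**Exploiting the discrete orthogonality.** First I would write $\bfpsi=\bfv_h-\Grad p$ and observe that $\bfpsi\in\K^{\Div}$ means $\Div\bfpsi=0$, so by the exactness of the de~Rham complex~\eqref{eq:derham} there is $q\in Q=\Honezero$ with $\bfpsi=\Curl q$. The key is to estimate $\|\bfpsi\|_0^2=(\bfpsi,\Curl q)$ by playing the continuous object $\Curl q$ against its discrete interpolant. The natural move is to insert the interpolant $(\Curl q)^I\in\Vh$ of $\Curl q$ and use the discrete orthogonality hypothesis $\bh(\bfv_h,\Curl q_h)=0$. Since $\Div(\Curl q)=0$, the interpolant $(\Curl q)^I$ lies in $\kerdiv=\Curl\Qh$, so $(\Curl q)^I=\Curl q_h$ for some $q_h\in\Qh$, and the hypothesis $\bfv_h\in\Kbhrot$ gives $\bh(\bfv_h,\Curl q_h)=0$. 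I would then split
\[
\|\bfpsi\|_0^2=(\bfpsi,\Curl q)=(\bfv_h,\Curl q)-(\Grad p,\Curl q),
\]
where the last term vanishes because $\Grad p\perp\Curl q$ in $\Ltwovecomega$ (gradients are $L^2$-orthogonal to curls under the boundary conditions). For the remaining term $(\bfv_h,\Curl q)$ I would insert and subtract $\Curl q_h$ and use the norm equivalence $|\cdot|_h\sim\|\cdot\|_0$ to convert between the $L^2$ product and the $\bh$ product; schematically,
\[
(\bfv_h,\Curl q)=(\bfv_h,\Curl q-\Curl q_h)+\big[(\bfv_h,\Curl q_h)-\bh(\bfv_h,\Curl q_h)\big],
\]
the bracketed discrete term being controlled by the consistency/equivalence of $\bh$ and the vanishing of $\bh(\bfv_h,\Curl q_h)$.

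**The main obstacle and how to close it.** The hard part will be the bracketed consistency term: unlike the classical setting in~\cite{Rodolfo}, we only have $\bh(\bfv_h,\Curl q_h)=0$ and \emph{not} $L^2$-orthogonality $(\bfv_h,\Curl q_h)=0$, and we must also cover the unstabilized case $\bh=\bho$. I expect to control $(\bfv_h,\Curl q_h)$ by writing it as $(\bfv_h,\Curl q_h)-\bh(\bfv_h,\Curl q_h)$ and bounding the difference through the projection $\PE$ entering the definition~\eqref{eq:localBE}--\eqref{eq:global_bh} of $\bho$, together with the approximation estimate of Lemma~\ref{lem:estPE} and the consistency of $\PE$ on gradients. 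The first term $(\bfv_h,\Curl q-\Curl q_h)=(\bfv_h,\Curl(q-q_h))$ is the genuine interpolation error, to be bounded by the amended interpolation estimate of Lemma~\ref{lem:interpol_est} applied to the gradient-type quantity $\Curl q$; this supplies the factor $h^s$. Finally I would use the elliptic regularity bound $\|q\|_{1+s}\le C\|\bfpsi\|_0=C\|\Curl q\|_0\le C\|\bfpsi\|_0$ (again from Lemma~\ref{lem:lem2_rodolfo} or the analogous $\Curl$ regularity) to absorb the $H^{1+s}$-norm of $q$ back into $\|\bfpsi\|_0$, divide through by $\|\bfpsi\|_0$, and arrive at $\|\bfpsi\|_0\le Ch^s\|\Div\bfv_h\|_0$. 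The delicate point throughout is keeping the stabilization-free case honest: every use of $\bh$ must go through the $L^2$-equivalence hypothesis $\underline{c}\|\cdot\|_0^2\le|\cdot|_h^2\le\overline{c}\|\cdot\|_0^2$ rather than through any stabilization term, so that the argument applies verbatim to both $\bho$ and $\bhs$.
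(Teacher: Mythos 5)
Your setup (invoking Lemma~\ref{lem:lem2_rodolfo} for the decomposition and for the bound $\|\Grad p\|_s\le C\|\Div\bfv_h\|_0$) matches the paper, and you correctly isolate the bound on $\bfpsi$ as the whole difficulty. However, the device you propose for producing the factor $h^s$ --- writing $\bfpsi=\Curl q$ with $q\in\Honezero$ and playing $\Curl q$ against a discrete interpolant $(\Curl q)^I=\Curl q_h$ --- has a genuine gap that cannot be patched. First, Lemma~\ref{lem:interpol_est} applies \emph{only} to fields that are globally gradients with $\mathbf{H}^s(\Omega)$ regularity; that restriction is the entire point of the amendment to \cite[Lemma~6]{Rodolfo}, whose unrestricted version the paper explicitly says is false. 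The field $\Curl q=\bfpsi$ is not a gradient, so the lemma does not apply to it. Second, and more fundamentally, $\bfpsi$ has no Sobolev regularity beyond $L^2$: its rotation is $\Rot\bfpsi=\Rot\bfv_h$, which for a virtual element function is a distribution supported on the mesh skeleton (the tangential jumps of $\bfv_h$ across edges, since $\Vh$ enforces only normal continuity), not an $L^2$ function. Hence $q$, which solves $-\Delta q=\Rot\bfpsi$ in $\Honezero$, is only in $H^1$, and your claimed bound $\|q\|_{1+s}\le C\|\Curl q\|_0$ is an inverse-type estimate (a stronger norm controlled by a weaker seminorm, since $\|\Curl q\|_0=|q|_1$) and is false. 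Without extra regularity of $q$, the term $(\bfv_h,\Curl(q-q_h))$ admits no rate in $h$ at all, so your scheme cannot yield $\|\bfpsi\|_0\le Ch^s\|\Div\bfv_h\|_0$. The same lack of smallness reappears in your bracketed consistency term: bounding $(\bfv_h,\Curl q_h)-\bho(\bfv_h,\Curl q_h)$ requires $\|\bfv_h-\PE\bfv_h\|_{0,E}$ or $\|\Curl q_h-\PE\Curl q_h\|_{0,E}$ to be small, which again holds only for the regular (gradient) part of $\bfv_h$, not for $\bfv_h$ or $\Curl q_h$ themselves.

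The paper's proof puts the interpolant where the regularity actually is: on $\Grad p$. Starting from the norm equivalence, $\underline{c}\|\bfpsi\|_0^2\le|\bfpsi|_h^2=\bh(\Grad p-\bfv_h,\Grad p-\bfv_h)$, it inserts $(\Grad p)^I$ and uses the algebraic identity $\Div(\Grad p)^I=P_k(\Div\Grad p)=\Div\bfv_h$ (because $\Div\bfpsi=0$ and $\Div\bfv_h$ is already a piecewise polynomial of degree $k$), so that $(\Grad p)^I-\bfv_h\in\kerdiv=\Curl\Qh$. This single membership simultaneously kills $\bh(\bfv_h,(\Grad p)^I-\bfv_h)$ (by $\bfv_h\in\Kbhrot$) and $(\Grad p,(\Grad p)^I-\bfv_h)$ (by the $L^2$-orthogonality of $\G\cap\V$ and $\K^{\Div}$), reducing the whole estimate to interpolation and consistency errors, $\bh(\Grad p,\cdot)-(\Grad p,\cdot)$, for the \emph{regular} function $\Grad p$; these are bounded via Lemmas~\ref{lem:estPE} and~\ref{lem:interpol_est} by $Ch^s\|\Grad p\|_s$ times controllable factors, and Young's inequality closes the argument. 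It is worth noting how close your structure is: since interpolation reproduces discrete functions, $(\Curl q)^I=\bfv_h-(\Grad p)^I$, i.e.\ your $\Curl q_h$ is, up to sign, exactly the paper's test object --- but the smallness must be extracted from consistency on $\Grad p$, never from approximating $\Curl q$, which is hopeless for regularity reasons.
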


\begin{proof}
The existence of the Helmholtz decomposition and the bound for $\Grad p$
follows from~\cite[Lemma~2]{Rodolfo} and is stated in
Lemma~\ref{lem:lem2_rodolfo}. It remains to show the bound for $\bfpsi$. We
have
\[
\aligned
\underline{c}\|\bfpsi\|^2_0&\le|\bfpsi|_h^2=b_h(\Grad p-\bfv_h,\Grad p-\bfv_h)\\
&=b_h(\Grad p-\bfv_h,\Grad p-(\Grad p)^I)+
b_h(\Grad p-\bfv_h,(\Grad p)^I-\bfv_h)\\
&\le|\bfpsi|_h|\Grad p-(\Grad p)^I|_h+
|b_h(\Grad p-\bfv_h,(\Grad p)^I-\bfv_h)|\\
&\le|\bfpsi|_h\|\Grad p-(\Grad p)^I\|_{0}+
|b_h(\Grad p-\bfv_h,(\Grad p)^I-\bfv_h)|\\
&\le C|\bfpsi|_h(h^s\|\Grad p\|_{s}+h\|\Div\bfv_h\|_{0})+
|b_h(\Grad p-\bfv_h,(\Grad p)^I-\bfv_h)|,
\endaligned
\]
where we used Lemma~\ref{lem:interpol_est} and the fact that 
$|\cdot|^2_h\le\|\cdot\|_0^2$.
It remains to estimate the last term.
Since $(\Grad p)^I-\bfv_h$ belongs to $\K_h^{div}$, it follows that
by the property~\eqref{eq:defQh} of $\Qh$, 
$b_h(\bfv_h,(\Grad p)^I-\bfv_h)=0$ 
and that $(\Grad p,(\Grad p)^I-\bfv_h)=0$ since $\K_h^{div}\subset\K^{div}$,
so that
\[
\aligned
\bh&(\Grad p-\bfv_h,(\Grad p)^I-\bfv_h)=
\bh(\Grad p,(\Grad p)^I-\bfv_h)\\
&=\bh(\Grad p,(\Grad p)^I-\bfv_h)-(\Grad p,(\Grad p)^I-\bfv_h)\\
&=\sum_E\left(\bho^E(\Grad p,(\Grad p)^I-\bfv_h)
-(\Grad p,(\Grad p)^I-\bfv_h)_E\right.\\
&\qquad
+\left.\SE(\Grad p-\PE\Grad p,(\Grad p)^I-\bfv_h-\PE((\Grad p)^I-\bfv_h))
\right).\\
\endaligned
\]
We observe that the last term appears if we are considering $\bh$ to be the
stabilized form $\bhs$ defined in~\eqref{eq:stab_bh}. Therefore, 
we bound separately the terms on the two last lines of the previous identity.
Using the properties of the projector $\PE$, Lemmas~\ref{lem:estPE}
and~\ref{lem:interpol_est}, we have for each $E\in\T_h$
\begin{equation}
\label{eq:first}
\aligned
\bho^E&\big(\Grad p,(\Grad p)^I-\bfv_h\big)
-\big(\Grad p,(\Grad p)^I-\bfv_h\big)_E\\
&=\big(\PE\Grad p,\PE((\Grad p)^I-\bfv_h)\big)_E
-\big(\Grad p,(\Grad p)^I-\bfv_h\big)_E\\
&=\big(\PE\Grad p,(\Grad p)^I-\bfv_h\big)_E
-\big(\Grad p,(\Grad p)^I-\bfv_h\big)_E\\
&=\big(\PE\Grad p-\Grad p,(\Grad p)^I-\bfv_h\big)_E\\
&=\big(\PE\Grad p-\Grad p,(\Grad p)^I-\Grad p\big)_E\\
&\qquad+\big(\PE\Grad p-\Grad p,\Grad p-\bfv_h\big)_E \\
&\le Ch_E^s\|\Grad p\|_{s,E}
\left(h_E^s\|\Grad p\|_{s,E}+h_E\|\Div\bfv_h\|_{0,E}+\|\bfpsi\|_{0,E}\right).
\endaligned
\end{equation}
We estimate the term containing the stabilization form $\SE$ by
using~\eqref{eq:bound_SE}, so that, for each element $E\in\T_h$ we have
\begin{equation}
\label{eq:second}
\aligned
\SE&(\Grad p-\PE\Grad p,(\Grad p)^I-\bfv_h-\PE((\Grad p)^I-\bfv_h))\\
&\le\overline{c}\|\Grad p-\PE\Grad p\|_{0,E}
\ \|(\Grad p)^I-\bfv_h - \PE ((\Grad p)^I-\bfv_h)\|_{0,E}\\
&=\overline{c}\|\Grad p-\PE\Grad p\|_{0,E}
\ \|(\mathbb{I}-\PE)((\Grad p)^I-\Grad p-\bfpsi)\|_{0,E}\\
&\le Ch_E^s\|\Grad p\|_{s,E}
\left(h_E^s\|\Grad p\|_{s,E}+h_E \|\Div\bfv_h\|_{0,E}+\|\bfpsi\|_{0,E}\right).
\endaligned
\end{equation}
To arrive at the last inequality we used Lemmas~\ref{lem:estPE}
and~\ref{lem:interpol_est}.

Putting together all pieces we finally obtain
\[
\aligned
\|\bfpsi\|^2_0&\le
C|\bfpsi|_h(h^s\|\Grad p\|_{s}+h\|\Div\bfv_h\|_{0})\\
&\quad+Ch^s\|\Grad p\|_{s}(h^s\|\Grad p\|_{s}
+h\|\Div\bfv_h\|_{0}+\|\bfpsi\|_{0})\\
&\le C(h^s\|\Grad p\|_{s}+h\|\Div\bfv_h\|_{0})
(|\bfpsi|_h+\|\bfpsi\|_{0})\\
&\quad+Ch^s\|\Grad p\|_{s}
(h^s\|\Grad p\|_{s}+h\|\Div\bfv_h\|_{0})
\endaligned
\]
Using the fact that $|\bfpsi|_h\le \|\bfpsi\|_{0}$, we finally obtain the
required result by means of the Young inequality.
\end{proof}
\begin{remark}
We observe that the two terms on the left hand side of~\eqref{eq:first}
and~\eqref{eq:second} are bounded by the same quantities. This implies that
we have the same result if $\bh$ is either $\bho$ or $\bhs$.
\end{remark}

\subsection{Consistency} 
\label{sub:consistency}
Since the continuous and discrete formulations of the source
problems~\eqref{eq:kikcont_source} and~\eqref{eq:kikdisc_source} contain 
different right hand sides, 
we need a uniform bound of
\[
|(\bff,\bfv_h)-b_h(\bff,\bfv_h)|  \qquad\forall\bfv_h\in\Kbhrot \subset\Vh,
\]
in terms of some $\rho(h)$ tending to zero as $h$ goes to zero times
$\|\bff\|_{0}\|\bfv_h\|_{\Div}$, in view of the application of a Strang lemma.

This is stated in the following proposition

\begin{proposition}
\label{prop:consistancy}
The consistency term satisfies
\begin{equation}
\label{eq:consistancy}
  \sup_{\bfv_h\in \Kbhrot} \frac{|(\bff,\bfv_h) - b_h(\bff,\bfv_h)|}{\|\bfv_h \|_{\Div}} \leq \rho(h) \|\bff\|_{0},
\end{equation}
with $\rho(h)$ tending to zero as $h$ goes to zero.
\end{proposition}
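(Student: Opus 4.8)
The plan is to exploit the orthogonality built into the projector $\PE$ to rewrite the consistency term element by element, and then to shift all of the $h$-dependence onto the test function $\bfv_h$, using that it lies in $\Kbhrot$. I will treat in detail the non-stabilized form $\bh=\bho$, for which $\bho(\bff,\bfv_h)=\sum_{E}(\PE\bff,\PE\bfv_h)_E$; the stabilized case is identical up to the extra term discussed below.

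First I would localize and use~\eqref{eq:PiE}. Since $\PE\bff$ and $\PE\bfv_h$ both belong to $\Grad(\Po_{k+1}(E))$, the defining orthogonality of $\PE$ gives $(\PE\bff,\PE\bfv_h)_E=(\PE\bff,\bfv_h)_E$ and $(\bff-\PE\bff,\PE\bfv_h)_E=0$, so that on each element
\[
(\bff,\bfv_h)_E-\bE(\bff,\bfv_h)=(\bff-\PE\bff,\bfv_h-\PE\bfv_h)_E.
\]
Summing over $E$, applying Cauchy--Schwarz both on each element and on the sum over elements, and using that the $L^2$-orthogonal projection is a contraction, $\|\bff-\PE\bff\|_{0,E}\le\|\bff\|_{0,E}$, yields
\[
|(\bff,\bfv_h)-\bho(\bff,\bfv_h)|\le\|\bff\|_{0}\Big(\sum_{E\in\T_h}\|\bfv_h-\PE\bfv_h\|_{0,E}^2\Big)^{1/2}.
\]
Thus everything reduces to showing that $\big(\sum_E\|\bfv_h-\PE\bfv_h\|_{0,E}^2\big)^{1/2}\le Ch^s\|\bfv_h\|_{\Div}$.

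For this last bound I would invoke the Helmholtz decomposition of Lemma~\ref{lem:lem7_rodo}: since $\bfv_h\in\Kbhrot$, write $\bfv_h=\Grad p+\bfpsi$ with $\|\Grad p\|_{s}\le C\|\Div\bfv_h\|_0$ and $\|\bfpsi\|_0\le Ch^s\|\Div\bfv_h\|_0$. Then $\bfv_h-\PE\bfv_h=(\Grad p-\PE\Grad p)+(\bfpsi-\PE\bfpsi)$. For the smooth part I apply Lemma~\ref{lem:estPE}, $\|\Grad p-\PE\Grad p\|_{0,E}\le Ch_E^s\|\Grad p\|_{s,E}$; for the remainder I again use that $\PE$ is a contraction, $\|\bfpsi-\PE\bfpsi\|_{0,E}\le\|\bfpsi\|_{0,E}$. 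Squaring, summing over $E$, and inserting the two decomposition bounds gives $\sum_E\|\bfv_h-\PE\bfv_h\|_{0,E}^2\le Ch^{2s}\|\Div\bfv_h\|_0^2\le Ch^{2s}\|\bfv_h\|_{\Div}^2$, i.e. $\rho(h)=Ch^s\to0$. For $\bh=\bhs$ the only new contribution is $\sum_E\SE(\bff-\PE\bff,\bfv_h-\PE\bfv_h)$, which by~\eqref{eq:bound_SE} is controlled by $\overline{c}\sum_E\|\bff-\PE\bff\|_{0,E}\|\bfv_h-\PE\bfv_h\|_{0,E}$, and hence by exactly the same quantities, as already observed in the Remark following Lemma~\ref{lem:lem7_rodo}.

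The main obstacle is conceptual rather than computational: the datum $\bff$ is merely $L^2$, so no smallness can be extracted from the factor $\|\bff-\PE\bff\|_{0,E}$, which stays $O(1)$. All of the decay must therefore be produced on the test-function side, and this is possible only because $\bfv_h$ is constrained to lie in $\Kbhrot$: the discrete rotation-free condition is what licenses the Helmholtz decomposition with an $O(h^s)$ rough part $\bfpsi$ and a smooth part $\Grad p$ that $\PE$ reproduces to order $h^s$. Establishing those decomposition bounds for $\bfv_h\in\Kbhrot$ (rather than for genuinely $L^2$-orthogonal functions) is precisely the content of Lemma~\ref{lem:lem7_rodo}, and it is the step on which the whole estimate rests.
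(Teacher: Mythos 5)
Your proof is correct and follows essentially the same route as the paper: both reduce the consistency term, via the orthogonality of $\PE$, to $\sum_{E}\|\bff\|_{0,E}\|\bfv_h-\PE\bfv_h\|_{0,E}$, then invoke the Helmholtz decomposition of Lemma~\ref{lem:lem7_rodo} for $\bfv_h\in\Kbhrot$ together with Lemma~\ref{lem:estPE} on the gradient part and a trivial bound on $\bfpsi$, yielding $\rho(h)=Ch^s$. The only differences are cosmetic (your symmetric identity $(\bff-\PE\bff,\bfv_h-\PE\bfv_h)_E$ versus the paper's $(\bff,\bfv_h-\PE\bfv_h)_E$, and your separate $\ell^2$ treatment of the projection error before inserting the decomposition), and your handling of the stabilization term matches the paper's as well.
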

\begin{proof}
For $\bfv_h\in\Kbhrot$ we have
  \begin{equation*}
  \begin{aligned}
   |(\bff,\bfv_h) &- b_h(\bff,\bfv_h)| 
    = \Big| \sum_{E\in\T_h} (\bff,\bfv_h)_E - b^E_h(\bff,\bfv_h)\Big|\\
    &= \Big|\sum_{E\in\T_h} (\bff,\bfv_h)_E - (\PE \bff,\PE\bfv_h)_E
-S^E(\bff-\PE \bff,\bfv_h - \PE \bfv_h)\Big|\\
&\le\Big|\sum_{E\in\T_h} (\bff, \bfv_h - \PE \bfv_h)_E\Big|
+\sum_{E\in\T_h}C\|\bff-\PE \bff\|_{0,E}\|\bfv_h - \PE \bfv_h\|_{0,E}\\
&\le C \sum_{E\in\T_h} \|\bff \|_{0,E} \  \|\bfv_h - \PE \bfv_h\|_{0,E},
  \end{aligned}
  \end{equation*}
where we used $(\PE\bff,\PE\bfv_h)_E = (\bff,\PE\bfv_h)_E$, since
$\PE\bfv_h\in\Grad(\Po_{k+1}(E))$ and the estimate
$\|\bff-\PE\bff\|_{0,E}\le 2\|\bff\|_{0,E}$.

Utilizing the continuous Helmholtz decomposition of $\bfv_h = \Grad p + \bfpsi$ 
in Lemma~\ref{lem:lem7_rodo} and the triangle inequality, we have,
\[
\aligned
|(\bff,\bfv_h) - b_h(\bff,\bfv_h)| 
&\le  \sum_{E\in\T_h} \|\bff\|_{0,E} \ \big( \|\Grad p - \PE \Grad p \|_{0,E} +
\|\bfpsi  - \PE \bfpsi\|_{0,E}\big).\\
&\le C  \|\bff\|_{0,\Omega} \big(\|\Grad p - \Pi_h \Grad p \|_{0,\Omega} +
2\|\bfpsi\|_{0,\Omega}\big).\\
\endaligned
\]   
Lemmas~\ref{lem:estPE} and~\eqref{eq:lem7_rodolfo_decomposition} finally imply
for $s>1/2$
\[
    |(\bff,\bfv_h) - b_h(\bff,\bfv_h)| 
    \le C h^s \|\bff\|_{0}  \|\Div \bfv_h\|_{0}
    \le C h^s \|\bff\|_{0}  \|\bfv_h\|_{\Div}
\]
and, hence,
\[
\sup_{\bfv_h\in \Kbhrot} 
\frac{|(\bff,\bfv_h) - b_h(\bff,\bfv_h)|}{\|\bfv_h\|_{\Div}}
\le C h^s \|\bff\|_{0},
\]
which proves~\eqref{eq:consistancy}.
\end{proof}
\subsection{Uniform convergence of $T_h$ to $T$}
In this section we discuss the conditions which imply the uniform convergence
of discrete solution operator $T_h$ defined in~\eqref{eq:defTh} to the
continuous one $T$, given in~\eqref{eq:defT}.
Since these two operators are associated to the source
problems in mixed form~\eqref{eq:kikdisc_source} and~\eqref{eq:kikcont_source},
respectively, we apply the theory developed in~\cite{BBG2}. Hence, we introduce
the necessary conditions and then we prove that they imply uniform
convergence.

First, let's define the solution spaces for Problem~\eqref{eq:kikcont_source}. 
\begin{definition}
\label{def:solution_spaces}
{\textbf{The solution spaces $V_0$ and $Q_0$.}}
Let $\V_0$ be the subspace of $\V$ and $Q_0$ be the subspace of $Q$
such that for all $\bff\in\Ltwovecomega$
the solutions $(\bfu,p)\in\V\times Q$ of Problem~\eqref{eq:kikcont_source} belong to
$\V_0\times Q_0$.

Specifically, the space of solutions $\V_0$ is defined as
\[
\V_0 := \{ \bfu \in \V: \bfu = T\bff, \text{ for some } \bff\in \Ltwovecomega\},
\]
where $T$ is the solution operator defined in~\eqref{eq:defT}.

We endow the spaces $\V_0$ and $Q_0$ with their natural norms
\begin{equation}
\aligned
\|\bfu\|_{\V_0} &:= \inf \{\|\bff\|_{0}: T\bff = \bfu\}, &\\ 
\|p\|_{Q_0} &:= \inf \{\|\bff\|_{0}: p \text{ is the second component of the
solution of~\eqref{eq:kikcont_source} with datum } \bff\}. &\\
\endaligned
\end{equation} 

Clearly, due to the second equation in~\eqref{eq:kikcont_source}, all possible 
solutions $\bfu$ are rot-free, hence  
\[
\V_0 \subset \Kerrot.
\]
Moreover, by Lemma~\ref{lem:lem2_rodolfo} we have that the solutions
$\bfu\in\V_0$ satisfy
$\bfu\in\Hsbold$ for $1/2<s\le1$ and it is easy to verify that 
$\Div \bfu\in H^1(\Omega)$. Thus, $\V_0$ is compactly embedded in
$\V$.

\end{definition}

We now introduce three properties that shall be used in the proof of the
uniform convergence.
\begin{definition}
{\textbf{Ellipticity in the discrete kernel (EDK).}}
\label{def:DELKR}
We say that the ellipticity in the discrete kernel of 
Problem~\eqref{eq:kikdisc} is satisfied if there exists 
a positive constant $\alpha$, independent of the  mesh size $h$, such that 
\begin{equation}
  (\Div \bfv_h, \Div \bfv_h) \geq \alpha \|\bfv_h\|^2_{\Div} \quad \forall\bfv_h \in \Kbhrot.
\end{equation}
\end{definition}

\begin{definition}
{\textbf{Weak approximability of $Q_0$ (WA).}} 
We say that the solution space $Q_0$ satisfies the weak approximability
property if there exists 
$\omega_1(h)$ going to zero as the mesh size $h$ goes to zero, such that 
\begin{equation}
\underset{\bfv_h\in \Kbhrot}\sup \frac{(\bfv_h, \Curl p)}{\|\bfv_h\|_{\Div}} \leq \omega_1(h) \|p\|_{Q_0} \quad \forall p \in Q_0  .
\end{equation}
\end{definition}

\begin{definition}
{\textbf{Strong approximability of $\V_0$ (SA).}}

We say that the solution space $\V_0$ satisfies the strong approximability
property  if
there exists $\omega_2(h)$ tending to zero as mesh size $h$ goes to zero, such
that for all $\bfu \in \V_0$ there exists $\bfuSA\in\Kbhrot$ satisfying
\begin{equation}
\|\bfu - \bfuSA\|_{\Div} \leq \omega_2(h)
\|\bfu\|_{V_0}.
\end{equation}   
\end{definition}
%

Next we recall the theorem regarding the uniform convergence of the discrete solution operator $T_h$ to its continuous counterpart $T$ as $h$ approaches zero, as originally outlined in reference \cite{BBG2}. Within this context, we present an in-depth proof, considering the fact that we are dealing with a nonconforming approximation. Notably, our approach involves the utilization of the bilinear form $\bh$ instead of the $L^2$-scalar product in the formulation of our discrete problem.

\begin{theorem}
\label{th:vem_uniform_conv_sol_operator}
Assume that the EDK is satisfied together with the WA 
and SA of the spaces $Q_0$ and $\V_0$ respectively. 
Then the discrete sequence $\{T_h\}$ converges uniformly to $T$ in $\V$. 
That is, there exists $\omega_3(h)$ tending to zero as mesh size $h$ goes to zero, such that
\[
\|T\bff-T_h\bff\|_{\Div}\le \omega_3(h)\|\bff\|_{0}
          \quad \forall \bff\in \Ltwovecomega.
\]
\end{theorem}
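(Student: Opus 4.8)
The plan is to run the Strang-type argument for mixed methods from \cite{BBG2}, adapted to the nonconforming bilinear form $\bh$. Write $\bfu=T\bff$ and $\bfu_h=T_h\bff$, and let $p\in Q_0$ be the continuous multiplier in~\eqref{eq:kikcont_source}; by the definitions of the solution norms one has $\|\bfu\|_{\V_0}\le\|\bff\|_0$ and $\|p\|_{Q_0}\le\|\bff\|_0$. The key structural observation is that both $\bfu_h$ and the strong-approximation element of $\bfu$ lie in the discrete kernel $\Kbhrot$, on which the discrete equation simplifies: testing the first line of~\eqref{eq:kikdisc_source} with $\bfv_h\in\Kbhrot$ annihilates the multiplier term $\bh(\bfv_h,\Curl p_h)$, leaving $(\Div\bfu_h,\Div\bfv_h)=\bh(\bff,\bfv_h)$.

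First I would derive an error equation on $\Kbhrot$. Testing the continuous equation~\eqref{eq:kikcont_source} with an arbitrary $\bfv_h\in\Kbhrot\subset\V$ gives $(\Div\bfu,\Div\bfv_h)=(\bff,\bfv_h)-(\bfv_h,\Curl p)$; here the multiplier term does \emph{not} vanish, precisely because $\Kbhrot\not\subset\Kerrot$. Subtracting the simplified discrete identity yields, for all $\bfv_h\in\Kbhrot$,
\[
(\Div(\bfu-\bfu_h),\Div\bfv_h)=\big[(\bff,\bfv_h)-\bh(\bff,\bfv_h)\big]-(\bfv_h,\Curl p).
\]
The bracketed term is the consistency error controlled by Proposition~\ref{prop:consistancy}, namely $|(\bff,\bfv_h)-\bh(\bff,\bfv_h)|\le\rho(h)\|\bff\|_0\|\bfv_h\|_{\Div}$, while the last term is bounded through the weak approximability property by $\omega_1(h)\|p\|_{Q_0}\|\bfv_h\|_{\Div}\le\omega_1(h)\|\bff\|_0\|\bfv_h\|_{\Div}$.

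Next I would split the error using the strong-approximation element $\bfuSA\in\Kbhrot$ furnished by SA, writing $\bfu-\bfu_h=(\bfu-\bfuSA)+\bfz_h$ with $\bfz_h:=\bfuSA-\bfu_h\in\Kbhrot$. The first piece is handled directly: $\|\bfu-\bfuSA\|_{\Div}\le\omega_2(h)\|\bfu\|_{\V_0}\le\omega_2(h)\|\bff\|_0$. For $\bfz_h$ I would invoke the ellipticity in the discrete kernel, $\alpha\|\bfz_h\|_{\Div}^2\le(\Div\bfz_h,\Div\bfz_h)$, and expand $(\Div\bfz_h,\Div\bfz_h)=(\Div(\bfuSA-\bfu),\Div\bfz_h)+(\Div(\bfu-\bfu_h),\Div\bfz_h)$. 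The first summand is $\le\omega_2(h)\|\bff\|_0\|\bfz_h\|_{\Div}$ by Cauchy--Schwarz and SA, and the second is bounded by applying the error equation above with $\bfv_h=\bfz_h$. Cancelling one factor of $\|\bfz_h\|_{\Div}$ gives $\|\bfz_h\|_{\Div}\le C\alpha^{-1}(\omega_1(h)+\omega_2(h)+\rho(h))\|\bff\|_0$, and a final triangle inequality delivers the claim with $\omega_3(h):=C\big(\omega_1(h)+\omega_2(h)+\rho(h)\big)$, which tends to zero.

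The step I expect to be the main obstacle — and the reason all three properties EDK, WA, SA are needed — is the control of the multiplier term $(\bfv_h,\Curl p)$ in the error equation: since the discrete kernel is not contained in the continuous one, this term survives and must be absorbed by the weak approximability property. This is exactly where the nonconformity of the scheme (the use of $\bh$ in place of the $L^2$ product, both in the definition of $\Kbhrot$ and on the right-hand side) couples to the mixed structure, and one must check that the consistency bound of Proposition~\ref{prop:consistancy} was established over $\Kbhrot$ so that it is available precisely for the test functions arising here.
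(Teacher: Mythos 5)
Your proposal is correct and follows essentially the same route as the paper's proof: split the error via the strong-approximation element $\bfuSA\in\Kbhrot$, apply EDK to the kernel difference $\bfuSA-\bfu_h$, and control the resulting terms through the error equation (where the discrete multiplier term vanishes on $\Kbhrot$), bounding the surviving $(\bfv_h,\Curl p)$ term by WA and the right-hand-side mismatch by the consistency estimate of Proposition~\ref{prop:consistancy}. The only difference is cosmetic: you make explicit the bounds $\|\bfu\|_{\V_0}\le\|\bff\|_0$ and $\|p\|_{Q_0}\le\|\bff\|_0$, which the paper uses implicitly in its final step.
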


\begin{proof}
Let $(\bfu, p) \in \V \times Q$ be a solution of~\eqref{eq:kikcont} where $\bfu = T\bff$. 
  Let $(\bfu_h, p_h) \in \Vh \times Q_h$ be a solution for the discrete 
  Problem~\eqref{eq:kikdisc} with $\bfu_h=T_h \bff$ for $\bff \in \Ltwovecomega$.
   To prove uniform convergence of the operator $T_h$ to $T$, 
   we need to estimate $\|T\bff - T_h\bff\|_{\Div}$. 
   This is the same as estimating $\|\bfu - \bfu_h\|_{\Div}$. 
   Since we assume the SA of the space $\V_0$, then for all solutions $\bfu$
there exists 
   $\bfu^I_h \in \Kbhrot$ such that 
   $$
   \|\bfu-\bfu^I_h\|_{\Div}\rightarrow 0.
   $$
  We split the norm  $\|\bfu - \bfu_h\|_{\Div}$ into two pieces and use the 
  triangular inequality as follows:
   \begin{equation}
   \label{eq:error_setimate_strang}
   \aligned
     \|\bfu - \bfu_h\|_{\Div} 
              &= \|\bfu - \bfu_h \pm \bfu^I_h\|_{\Div} \\
              &\leq \|\bfu - \bfu^I_h \|_{\Div} + \|\bfu^I_h -\bfu_h\|_{\Div}. 
    \endaligned
    \end{equation}
    Looking at the second norm on the right hand side, and since the problem 
    satisfies EDK with both $\bfu^I_h$ and $ \bfu_h$ belonging to the discrete kernel $\Kbhrot$ we have,
    \[
    \aligned
     \alpha \|\bfu^I_h -\bfu_h\|^2_{\Div} 
     & \leq  \big(\Div(\bfu^I_h - \bfu_h ), \Div(\bfu^I_h - \bfu_h) \big ) \pm (\Div\bfu,\Div (\bfu^I_h - \bfu_h ))&\\ 
     &  = \big ( \Div(\bfu^I_h - \bfu ) \big ), \Div(\bfu^I_h - \bfu_h \big ) \big ) + \big (\Div(\bfu - \bfu_h ), \Div(\bfu^I_h -\bfu_h ) \big ). &\\
     \endaligned
     \]
     Now, taking the right hand side and by applying the Cauchy-Schwarz inequality to the 
     first scalar product and the error equation of~\eqref{eq:kikcont_source} to the second, we get
     \[
      \aligned             
      \alpha \|\bfu^I_h -\bfu_h\|^2_{\Div} 
     & \leq \|\bfu^I_h - \bfu \|_{\Div} \| \bfu^I_h - \bfu_h \|_{\Div} + \\
     &- \big (\bfu^I_h - \bfu_h, \Curl p \big) 
     + b_h \big (\bfu^I_h - \bfu_h, \Curl p_h \big) + \\
     & + \big ( \bff, \bfu^I_h - \bfu_h \big ) - b_h (\bff, \bfu^I_h - \bfu_h), \\
     \endaligned
     \]
     where $b_h(\bfu^I_h - \bfu_h, \Curl p_h \big)$ is equal 
     to zero since $\bfu^I_h- \bfu_h \in \Kbhrot$. 
     Therefore, we get
     \[
     \aligned
     \alpha \|\bfu^I_h -\bfu_h\|^2_{\Div} 
     & \leq \|\bfu^I_h - \bfu \|_{\Div} \| \bfu^I_h - \bfu_h \|_{\Div} +\\ 
     & +  \underset{\bfv_h\in \Kbhrot}\sup \frac{\big( \bfv_h,\Curl p\big)}{\|\bfv_h\|_{\Div}} \|\bfu^I_h - \bfu_h\|_{\Div} +\\
     & + \underset{\bfv_h \in \Kbhrot}\sup \frac{| \big (\bff,\bfv_h \big) 
     - b_h\big(\bff,\bfv_h \big) |}{\|\bfv_h\|_{\Div}}
     \|\bfu^I_h - \bfu_h\|_{\Div} .\\
     \endaligned
     \]
     Taking the common factor, we finally get,
     \[
     \aligned
          \alpha \|\bfu^I_h -\bfu_h\|^2_{\Div} 
	 &\leq \bigg (\|\bfu^I_h - \bfu \|_{\Div} + \underset{\bfv_h\in
        \Kbhrot}\sup \frac{\big( \bfv_h,\Curl p\big)}{\|\bfv_h\|_{\Div}}\\ 
     &\qquad +\underset{\bfv_h \in \Kbhrot}\sup \frac{| \big (\bff,\bfv_h \big) 
        - b_h\big(\bff,\bfv_h \big) |}{\|\bfv_h\|_{\Div}} \bigg )\|\bfu^I_h -
	    \bfu_h\|_{\Div}.
      \endaligned
      \] 
    Applying SA property for the first norm on the right hand side, WA for the
second term and the consistency~\eqref{eq:consistancy} for the last one, we get
    \[
      \alpha \|\bfu^I_h -\bfu_h\|_{\Div} \leq \bigg(\omega_2(h) \|\bfu \|_{V_0} + \omega_1(h)\|p\|_{Q_0}
      + \rho(h) \|\bff\|_{0}\bigg)
    \]
    Putting all pieces together by using the above in~\eqref{eq:error_setimate_strang}, we get

    \[
    \aligned
      \|T\bff - T_h\bff\|_{\Div} &=\|\bfu - \bfu_h \|_{\Div} 
      \le \|\bfu - \bfu^I_h \|_{\Div} + \|\bfu^I_h - \bfu_h \|_{\Div}\\
      &= \|\bfu - \bfu^I_h \|_{\Div} + \frac{1}{\alpha} \bigg(\omega_2(h) \|\bfu \|_{V_0} + \omega_1(h)\|p\|_{Q_0}
      + \rho(h) \|\bff\|_{0}\bigg)\\
      &\le \omega_2(h) \|\bfu \|_{V_0} + \frac{1}{\alpha} \bigg(\omega_2(h) \|\bfu \|_{V_0} + \omega_1(h)\|p\|_{Q_0}
      + \rho(h) \|\bff\|_{0}\bigg)\\
      %
      &\le \omega_3(h)\|\bff\|_{0}
      \endaligned
    \]
    where $\omega_3(h) = (1+\frac{1}{\alpha})\omega_2(h) 
       + \frac{1}{\alpha}  \omega_1(h)
      + \frac{1}{\alpha}\rho(h)$. 
    Then the result follows immediately and convergence is achieved.
\end{proof}

\subsection{Discrete Compactness Property}

An important tool for the analysis of this problem is the so called
\emph{Discrete Compactness Property} (DCP).
We now define the DCP, and the \emph{Strong Discrete Compactness Property}
(SDCP) in our context, (see, e.g., \cite{Kikuchi,SDCP}).
Since our discrete problem~\eqref{eq:kikdisc} uses the bilinear form $\bh$
instead of the $\mathbf{L}^2$-scalar product, everything should be rephrased
accordingly. 
\begin{definition}
{\textbf{Discrete compactness property (DCP).}}
\label{def:DCP}
We say that the \textit{Discrete Compactness Property} holds true for a family
of discrete spaces $(\V_h,Q_h)$, if any sequence $\{\vhn \}_{n=0}^{\infty}$
with $\{\vhn \} \subset \Vhn$, such that
\begin{equation}
\label{eq:DCP_ass}
\aligned
&\|\vhn\|_{div}=1,\\
&\bhn( \vhn, \Curl q_{h_n}) = 0 \quad \forall q_{h_n} \in Q_{h_n},
\endaligned 
\end{equation} 
contains a subsequence (not relabeled) which converges strongly to some 
${\bfv_0}$ in $\Ltwovecomega$, that is
\[
\|\vhn-\bfv_0\|_{0}\rightarrow 0, \quad n\rightarrow\infty.
\]
Here $\{ h_n \}_{n=0}^{\infty}$ is an arbitrary subsequence of our mesh
sequence with $h_n\rightarrow 0$ as $n \rightarrow \infty$. 
 
\end{definition}

\begin{definition}
{\textbf{Strong discrete compactness property (SDCP).}}
We say that the discrete spaces $(\Vh,Q_h)$ satisfy the 
\textit{Strong Discrete Compactness Property} if they meet the \textit{DCP} 
property with
$$
\Rot \bfv_0 = 0.
$$
\end{definition}

\begin{theorem}
Let us assume that the seminorm $|\cdot|_h$ is equivalent to the $L^2(\Omega)$
norm.  Then the SDCP holds for $(\Vh,\Qh)$.
\label{th:SDCP}
\end{theorem}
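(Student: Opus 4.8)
The plan is to read off the required compactness from the continuous Helmholtz decomposition supplied by Lemma~\ref{lem:lem7_rodo}, whose hypotheses are exactly met here: the assumption that $|\cdot|_h$ is equivalent to the $L^2$ norm is precisely what that lemma requires, and the second condition in~\eqref{eq:DCP_ass} says that each $\vhn$ lies in $\Kbhnrot$. Accordingly, for every $n$ I would write $\vhn=\Grad p_n+\bfpsi_n$ with $p_n\in H^{1+s}(\Omega)$, $\bfpsi_n\in\K^{\Div}$, and the two bounds $\|\Grad p_n\|_s\le C\|\Div\vhn\|_0$ and $\|\bfpsi_n\|_0\le Ch_n^s\|\Div\vhn\|_0$. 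Since the normalization $\|\vhn\|_{\Div}=1$ forces $\|\Div\vhn\|_0\le1$, the gradient parts are uniformly bounded in $\Hsbold$, whereas the solenoidal remainders obey $\|\bfpsi_n\|_0\le Ch_n^s\to0$.

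From here the DCP is immediate. The compact embedding $\Hsbold\subset\subset\Ltwovecomega$ recalled in Lemma~\ref{lem:Hodiv_compactness_L2} lets me extract a subsequence (not relabeled) along which $\Grad p_n\to\bfv_0$ strongly in $\Ltwovecomega$ for some limit $\bfv_0$. Adding $\bfpsi_n\to\bfzero$ in the same norm gives $\vhn=\Grad p_n+\bfpsi_n\to\bfv_0$ strongly in $\Ltwovecomega$, which is the strong $L^2$ convergence demanded by Definition~\ref{def:DCP}.

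To upgrade this to the SDCP I must show $\Rot\bfv_0=0$. The key point is that the limit is captured entirely by the gradient parts, each of which is rotation free: for every $q\in\Honezero$ the integration-by-parts identity gives $(\Grad p_n,\Curl q)=(\Rot\Grad p_n,q)=0$. Letting $n\to\infty$ along the chosen subsequence and using the strong $L^2$ convergence together with $\Curl q\in\Ltwovecomega$, I obtain $(\bfv_0,\Curl q)=0$ for all $q\in\Honezero$, that is $\Rot\bfv_0=0$ in the weak sense, as required.

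The only genuinely delicate step is this last one, and it hinges on a single structural fact: the divergence-free component $\bfpsi_n$, which in general is \emph{not} rotation free, is forced to vanish in the limit at the rate $h_n^s$. It is precisely the equivalence of $|\cdot|_h$ with the $L^2$ norm---through the bound on $\bfpsi$ in Lemma~\ref{lem:lem7_rodo}---that guarantees this decay; were $\bfpsi_n$ allowed to survive in the limit it could contribute a nonzero rotation and break the SDCP.
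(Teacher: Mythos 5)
Your proposal is correct and follows essentially the same route as the paper's own proof: both apply Lemma~\ref{lem:lem7_rodo} to split $\vhn$ into a gradient part, uniformly bounded in $\Hsbold$, plus a solenoidal remainder of size $O(h_n^s)$, then use the compact embedding of Lemma~\ref{lem:Hodiv_compactness_L2} to extract a strongly convergent subsequence and identify the limit with the rotation-free limit of the gradient parts. The only difference is cosmetic: where the paper simply states that rotation-freeness passes to the limit, you spell it out via the identity $(\Grad p_n,\Curl q)=0$ for all $q\in\Honezero$, which is a welcome clarification rather than a deviation.
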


\begin{proof}
Let us consider a sequence $\{\vhn\}\subset\Vhn$ satisfying~\eqref{eq:DCP_ass}. 
In particular, $\{\vhn\}$ belongs to $\Kbhnrot$.
Then Lemma~\ref{lem:lem7_rodo} states that there exist 
$\bfpsi(n)\in\K^{div}$ and $p(n)\in H^{1+s}(\Omega)$, $1/2<s\le 1$ such that
\[
\vhn=\bfpsi(n)+\Grad p(n)
\]
with the following bounds
\[
\aligned
&\|\Grad p(n)\|_{s}\le C\|\Div \vhn\|_{0}\\
&\|\bfpsi(n)\|_{0}\le Ch_n^s\|\Div \vhn\|_{0}.
\endaligned
\]
We have that the sequence $\bfz(n)=\Grad p(n)$ is uniformly bounded in 
$\V\cap\Hroto$.
Thanks to Lemma~\ref{lem:Hodiv_compactness_L2},
$\V\cap\Hroto$ is a compact subspace of $\Ltwovecomega$, therefore there
exists a subsequence of $\bfz(n)$ (denoted by the same index $n$) that
converges strongly to some $\bfz_0$ in $\Ltwovecomega$, that is
\[
\|\bfz(n) - \bfz_0 \|_{0} \rightarrow 0,\quad n\rightarrow \infty. 
\]
From the fact that $\bfz(n)$ is rot-free, we can pass to the limit and obtain
that also $\Rot\bfz_0=0$.
For the same subsequence, we have
\[
\|\vhn-\bfz_0\|_{0} = 
\|\bfpsi(n) + \Grad p(n) -\bfz_0\|_{0}\le
\|\bfpsi(n)\|_{0}+\|\bfz(n)-\bfz_0\|_{0}.
\]

Using the above $L^2$ bounds for $\bfpsi(n)$ and the strong convergence of
$\bfz(n)$ to $\bfz_0$ in $L^2$, we obtain the strong convergence of $\vhn$ to
$\bfz_0$.
\end{proof}
%
\subsection{SDCP implies EDK, WA and SA}

In the following proposition we prove, in the VEM setting, the analogous result
as~\cite[Proposition~3]{SDCP} for the Maxwell's eigenvalue problem (see, also,
\cite{MonkDemko}).  
\begin{proposition}
\label{prop:SDCP_gives_EDK}
If the SDCP is satisfied for the discrete spaces $(\Vh,\Qh)$, then the EDK
holds true. 
\end{proposition}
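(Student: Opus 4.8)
The plan is to argue by contradiction following the standard pattern that links discrete compactness to ellipticity in the kernel. Suppose the EDK fails uniformly. Then for every $n$ there exists a mesh size $h_n\to 0$ and a function $\vhn\in\Kbhnrot$ with $\|\vhn\|_{\Div}=1$ such that $(\Div\vhn,\Div\vhn)=\|\Div\vhn\|_0^2\to 0$. The goal is to derive a contradiction with the normalization $\|\vhn\|_{\Div}=1$. Since $\|\vhn\|_{\Div}^2=\|\vhn\|_0^2+\|\Div\vhn\|_0^2=1$ and the divergence part tends to zero, we must have $\|\vhn\|_0^2\to 1$, so the $\Ltwovecomega$-norm of the sequence stays bounded away from zero.

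Next I would invoke the SDCP. Because $\{\vhn\}\subset\Kbhnrot$ and $\|\vhn\|_{\Div}=1$, the sequence satisfies exactly the hypotheses in~\eqref{eq:DCP_ass} of Definition~\ref{def:DCP}. The SDCP then yields a subsequence (not relabeled) converging strongly in $\Ltwovecomega$ to some limit $\bfv_0$ with $\Rot\bfv_0=0$. From the strong $L^2$ convergence and $\|\vhn\|_0\to 1$ we get $\|\bfv_0\|_0=1$, so in particular $\bfv_0\neq\bfzero$. The key point to establish is that $\bfv_0$ lies in the kernel of the divergence operator at the continuous level, i.e.\ $\Div\bfv_0=0$. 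This should follow by testing the weak definition of divergence against smooth functions: for $\phi\in C_0^\infty(\Omega)$ one writes $(\Div\vhn,\phi)=-(\vhn,\Grad\phi)$, and passing to the limit using $\|\Div\vhn\|_0\to0$ on the left and strong $L^2$ convergence of $\vhn$ on the right gives $(\bfv_0,\Grad\phi)=0$ for all such $\phi$, hence $\Div\bfv_0=0$ in the distributional sense. Combined with $\Rot\bfv_0=0$ and the boundary condition $\bfv_0\cdot\bfn=0$ inherited from $\V$, this places $\bfv_0\in\Hodiv\cap\Hroto$.

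The contradiction then comes from the structure of the continuous problem. A nonzero $\bfv_0\in\Hodiv\cap\Hroto$ with $\Div\bfv_0=0$ is, by the exactness of the de Rham complex~\eqref{eq:derham}, necessarily of the form $\bfv_0=\Curl q$ for some $q\in\Honezero$; but then the Poincar\'e/Friedrichs-type estimate forces the full $\Div$-norm of such a field to be controlled, and more directly, the continuous Friedrichs inequality used earlier gives $\|\bfv_0\|_0\le C\|\Div\bfv_0\|_0=0$ on the relevant subspace, contradicting $\|\bfv_0\|_0=1$. I expect the cleanest route is to show $\bfv_0=\bfzero$ directly: having both $\Div\bfv_0=0$ and $\Rot\bfv_0=0$ together with $\bfv_0\cdot\bfn=0$ and $\Omega$ simply connected forces $\bfv_0=\bfzero$, which contradicts $\|\bfv_0\|_0=1$.

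The main obstacle will be carefully justifying that the limit $\bfv_0$ is genuinely divergence-free at the continuous level and that the discrete orthogonality $\bhn(\vhn,\Curl q_{h_n})=0$ survives the passage to the limit in a way compatible with the continuous constraint; the subtlety is that $\bhn$ is only a seminorm form replacing the $L^2$ product and that $\Kbhnrot\not\subset\Kerrot$ in general, so one cannot directly transfer the discrete rot-free property. This is precisely where the equivalence of $|\cdot|_h$ with the $L^2$-norm (the standing hypothesis inherited via Theorem~\ref{th:SDCP}) and the strong $L^2$ convergence provided by the SDCP must be combined to control the difference between $\bhn(\cdot,\cdot)$ and $(\cdot,\cdot)$ on the limiting fields. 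Once $\Div\bfv_0=0$ and $\Rot\bfv_0=0$ are secured, the Friedrichs inequality closes the argument.
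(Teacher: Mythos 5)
Your proposal is correct and follows essentially the same route as the paper: a contradiction argument in which the SDCP supplies a strongly $\Ltwovecomega$-convergent subsequence with rot-free limit, the vanishing divergences force $\Div\bfv_0=0$ (you test distributionally, the paper uses weak $\Hdiv$-convergence plus the choice $\bfv=\bfu$ --- the same computation), and the Friedrichs inequality on rot-free fields of $\V$ kills the nonzero limit. Your closing worry about transferring the discrete orthogonality to the limit is superfluous: the ``strong'' part of the SDCP already hands you $\Rot\bfv_0=0$ directly, exactly as you used it earlier in the argument, and no norm-equivalence hypothesis enters this proposition.
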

\begin{proof}
The proof is by contradiction. 
Let us assume there exists a sequence $\{\uhn\}$ (where $h_n$ 
represents a decreasing sequence of mesh size tending to 
zero as $n\rightarrow \infty$) such that $\uhn \in \Kbhnrot$ that is
\[
 \bhn(\uhn,\Curl q_{h_n}) = 0 \quad \forall q_{h_n}\in \Qhn, 
\]
with 
\[
\|\uhn\|_{0} =1 \quad \text{ and } 
\quad
\|\Div \uhn\|_{0}=\frac{1}{n}, 
\quad
\text{ for } 1 \leq n < \infty.
\]
Since the DCP is satisfied, there exists a subsequence, also denoted 
by $\{\uhn\}$ such that $\uhn\rightarrow\bfu$ strongly in $\Ltwovecomega$,
this implies that $\|\bfu\|_{0}=1$. 
Moreover,  since $\{\uhn\}$ is uniformly bounded in $\V$ then 
$\uhn \rightharpoonup \bfu $ weakly in $\Hdiv$. However, for any $\bfv\in \V$
\begin{equation}
\begin{aligned}
 (\Div \bfu,\Div \bfv) 
   &= \lim_{n\rightarrow\infty}(\Div \uhn,\Div \bfv) \\
   & \leq \lim_{n\rightarrow\infty} \|\Div \uhn\|_{0} \|\Div \bfv\|_{0}  \quad
	\text{ by Cauchy-Schwarz }\\
   & \leq \lim_{n\rightarrow\infty} \frac{1}{n} \|\Div \bfv\|_{0} = 0.
\end{aligned}
\end{equation} 
By choosing $\bfv=\bfu$, we finally get $\|\Div \bfu\|^2_{0} = 0$ and hence
$\Div \bfu = 0$ in $\Omega$.\\
Now, since $\uhn\rightarrow \bfu$ strongly in $\Ltwovecomega$ this means
$\|\uhn - \bfu \|_{0}\rightarrow 0$. 
Also, we have that $\Div \uhn = \frac{1}{n} \rightarrow 0$ with $\Div\bfu=0$,
thus, 
\[
\|\Div (\uhn - \bfu)\|_{0} = \|\Div\uhn\|_{0}\rightarrow0.
\]
Therefore, we conclude that $\uhn \rightarrow \bfu$ strongly in $\Hdiv$ meaning
that 
$$\|\uhn - \bfu\|_{\Div} \rightarrow 0 .$$
Which finally implies that $\bfu\in\V$ because all $\uhn$ have boundary
conditions which are preserved when passing to the limit in the strong
convergence. \\
Moreover, by SDCP the limit $\bfu$ also has the property that $\Rot \bfu = 0$
in $\Omega$ with $\bfu\in\Kbhrot$, and by the Friedrich's inequality we have
\[
\|\bfu\|_{0}\leq C\|\Div \bfu\|_{0}
\]
 where the constant $C$, depends only on the domain. This implies $\bfu=0$ and
hence contradicts $\bfu$ having $\|\bfu\|_{0}=1$.
\end{proof}

\begin{proposition}
\label{prop:SDCP_gives_WA}
If the SDCP holds true, then the WA of $Q_0$ is satisfied.  
\end{proposition}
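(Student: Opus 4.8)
The plan is to argue by contradiction, mirroring the structure of the proof of Proposition~\ref{prop:SDCP_gives_EDK} and using the SDCP as the source of compactness. Suppose the WA fails. Then there exist a constant $\delta>0$, a sequence of mesh sizes $h_n\to0$, functions $p_n\in Q_0$ with $\|p_n\|_{Q_0}=1$, and fields $\vhn\in\Kbhnrot$ with $\|\vhn\|_{\Div}=1$ such that
\[
(\vhn,\Curl p_n)\ge\delta\qquad\text{for all }n,
\]
where homogeneity of the quotient has been used to normalize both $\|p_n\|_{Q_0}$ and $\|\vhn\|_{\Div}$ to one (choosing $\vhn$ as a near-maximizer of the supremum).

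Next I would extract the two limits. Since $\vhn\in\Kbhnrot$ satisfies $\bhn(\vhn,\Curl q_{h_n})=0$ for all $q_{h_n}\in\Qhn$ and $\|\vhn\|_{\Div}=1$, the assumed SDCP yields a subsequence (not relabeled) converging strongly in $\Ltwovecomega$ to some $\bfv_0$ with $\Rot\bfv_0=0$. For the multipliers, I would first record the a~priori bound $\|p_n\|_1\le C\|p_n\|_{Q_0}$: testing the first equation of~\eqref{eq:kikcont_source} with $\bfv=\Curl q$ identifies $\Curl p$ as the $\Ltwovecomega$-projection of the datum onto $\K^{\Div}$, so $\|\Curl p\|_0\le\|\bff\|_0$, and the Poincaré inequality~\eqref{eq:Poincare_curl} then gives $\|p\|_1\le C\|\bff\|_0$; taking the infimum over admissible data produces the claimed bound. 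Hence $\{p_n\}$ is bounded in $Q=\Honezero$, and along a further subsequence $p_n\rightharpoonup p_0$ weakly in $\Honezero$ with $p_0\in\Honezero$, so that $\Curl p_n\rightharpoonup\Curl p_0$ weakly in $\Ltwovecomega$.

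The contradiction then comes from a strong–weak passage to the limit. Along the common subsequence, the strong $L^2$ convergence $\vhn\to\bfv_0$ combined with the weak convergence $\Curl p_n\rightharpoonup\Curl p_0$ gives
\[
(\vhn,\Curl p_n)\to(\bfv_0,\Curl p_0),
\]
whence $(\bfv_0,\Curl p_0)\ge\delta$. On the other hand, integration by parts together with $\Rot\bfv_0=0$ and $p_0\in\Honezero$ vanishing on $\partial\Omega$ yields
\[
(\bfv_0,\Curl p_0)=(\Rot\bfv_0,p_0)=0,
\]
contradicting $(\bfv_0,\Curl p_0)\ge\delta>0$ and thereby establishing the WA.

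I expect the main obstacle to be the asymmetry of compactness available for the two factors: the space $Q_0$ is only \emph{continuously} embedded in $Q$ (its elements carry merely $H^1$ regularity, since $\Curl p$ is just an $L^2$-projection onto $\K^{\Div}$), so $\Curl p_n$ converges only weakly. The argument works precisely because the SDCP supplies \emph{strong} $L^2$ convergence of $\vhn$, which is exactly what is needed to take the limit of the pairing, and because the limit $\bfv_0$ is rot-free and hence automatically $L^2$-orthogonal to $\Curl(\Honezero)$. The care points are the justification of $\|p_n\|_1\le C\|p_n\|_{Q_0}$ and the consistent nesting of subsequences so that both limits hold along the same indices.
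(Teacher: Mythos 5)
Your proof is correct and follows essentially the same route as the paper's: argue by contradiction, apply the SDCP to extract a subsequence of $\vhn\in\Kbhnrot$ converging strongly in $\Ltwovecomega$ to a rot-free limit $\bfv_0$, pass to the limit in the pairing using strong--weak convergence, and conclude via $(\bfv_0,\Curl p_0)=(\Rot\bfv_0,p_0)=0$. The only difference is that you explicitly justify the bound $\|p_n\|_1\le C\|p_n\|_{Q_0}$ (via the projection identity obtained by testing with $\bfv=\Curl q$ and the Poincar\'e inequality), a detail the paper's proof leaves implicit when it simply asserts the weak convergence of the multipliers.
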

\begin{proof}
We shall prove this by contradiction. Let us assume that WA is not valid.
This means that $\exists\epsilon_0>0$ such that we can construct a
decreasing sequence of mesh sizes $\{h_n\}$ tending to zero as $n$ goes to
$\infty$ and a sequence of functions $\{ p(n)\}\subset Q_0$ with the
property that $\|p(n)\|_{Q_0}=1$
such that for all $n$ there exists $\vhn\in \Kbhnrot$ with
$\|\vhn\|_{\Div}=1$ and
\begin{equation}
\label{eq:contradiction}
  (\vhn, \Curl p(n)) \geq \epsilon_0.
\end{equation}
Applying the SDCP to the sequence $\{\vhn\}\in \Kbhnrot$ we can extract a
subsequence (denoted the same) that converges strongly in $\Ltwovecomega$
to some rotation free $\bfv_0$, that is   
\[
\aligned
&  \|\vhn -\bfv_0\|_{0}\rightarrow 0 \text{ for }n\to\infty\\
& \Rot\bfv_0 = 0.
\endaligned
\]
Moreover, there exists $p\in Q$ such up to a subsequence $p(n)$ converges
weakly to $p$. Therefore taking the limit as $n\to\infty$ yields
\[
\lim_{n\to\infty}(\vhn,\Curl p(n))=(\bfv_0,\Curl p)=(\Rot\bfv_0,p)=0,
\]
which contradicts~\eqref{eq:contradiction}.
\end{proof}
%
\begin{proposition}
\label{prop:SDCPAndapproxProp_gives_SA}
  Assume that SDCP holds, 
  the seminorm $|\cdot|_h$ is equivalent to the $L^2$ norm, and the
approximation in Lemma~\ref{lem:interpol_est} satisfied then the SA of $V_0$ holds true.
\end{proposition}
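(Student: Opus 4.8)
The plan is to build $\bfuSA$ by correcting the canonical interpolant $\bfu^I$ so that it lands in the discrete kernel $\Kbhrot$, and then to bound the two contributions to $\|\bfu-\bfuSA\|_{\Div}$ separately. First I would record the structure of the datum: since $\bfu\in\V_0\subset\Kerrot$ and $\Omega$ is simply connected, $\bfu$ is rot-free and hence a gradient $\bfu=\Grad p$ with $p\in H^{1+s}(\Omega)$, so that Lemmas~\ref{lem:estPE} and~\ref{lem:interpol_est} apply to $\bfu$; moreover $\Div\bfu\in H^1(\Omega)$ and, by the definition of $\|\cdot\|_{V_0}$ together with the regularity of $T$, one has $\|\bfu\|_{s}+\|\Div\bfu\|_{1}\le C\|\bfu\|_{V_0}$.

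For the construction I would use that, because $|\cdot|_h$ is equivalent to $\|\cdot\|_0$, the form $\bh$ is an inner product on $\Vh$, and the compatibility~\eqref{eq:defQh}, $\Curl\Qh=\kerdiv$, yields the $\bh$-orthogonal splitting $\Vh=\Kbhrot\oplus\Curl\Qh$. I would therefore define $r_h\in\Qh$ as the unique solution of $\bh(\Curl r_h,\Curl q_h)=\bh(\bfu^I,\Curl q_h)$ for all $q_h\in\Qh$ (well posed, since $\Curl$ is injective on $Q$ by Poincar\'e and $\bh$ is coercive), and set $\bfuSA:=\bfu^I-\Curl r_h$. By construction $\bfuSA\in\Kbhrot$, and since the correction lies in $\kerdiv$ we get $\Div\bfuSA=\Div\bfu^I=P_k(\Div\bfu)$, so the divergence part is immediate: $\|\Div(\bfu-\bfuSA)\|_0=\|\Div\bfu-P_k\Div\bfu\|_0\le Ch\|\Div\bfu\|_1$.

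For the $L^2$ part I split $\|\bfu-\bfuSA\|_0\le\|\bfu-\bfu^I\|_0+\|\Curl r_h\|_0$, bounding the first term by Lemma~\ref{lem:interpol_est}. To bound the correction I would use $|\Curl r_h|_h^2=\bh(\bfu^I,\Curl r_h)$ and insert $(\bfu,\Curl r_h)=0$ (valid because $\bfu\in\Kerrot$), obtaining $|\Curl r_h|_h^2=[\bh(\bfu^I,\Curl r_h)-(\bfu^I,\Curl r_h)]+(\bfu^I-\bfu,\Curl r_h)$, where the second bracket is controlled by $\|\bfu-\bfu^I\|_0\,\|\Curl r_h\|_0$.

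The main obstacle is the first (consistency) bracket. Here I would exploit the defining property of $\PE$, namely $(\PE\bfw,\Grad\pi)_E=(\bfw,\Grad\pi)_E$, to reduce it elementwise to projection errors: on each $E$, $\bho^E(\bfu^I,\Curl r_h)-(\bfu^I,\Curl r_h)_E=-((\mathbb{I}-\PE)\bfu^I,(\mathbb{I}-\PE)\Curl r_h)_E$, and the stabilization contribution $\SE(\bfu^I-\PE\bfu^I,\Curl r_h-\PE\Curl r_h)$, present only when $\bh=\bhs$, is bounded by the same quantity via~\eqref{eq:bound_SE}. Since $\|(\mathbb{I}-\PE)\Curl r_h\|_{0,E}\le\|\Curl r_h\|_{0,E}$ and $\|(\mathbb{I}-\PE)\bfu^I\|_{0,E}\le Ch_E^s\|\bfu\|_{s,E}+Ch_E\|\Div\bfu\|_{0,E}$ (from Lemmas~\ref{lem:estPE} and~\ref{lem:interpol_est} applied to $\bfu=\Grad p$), summing and using Cauchy--Schwarz over elements gives $|\Curl r_h|_h^2\le C(h^s\|\bfu\|_s+h\|\Div\bfu\|_0)\|\Curl r_h\|_0$. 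Dividing by $\|\Curl r_h\|_0$ and using the norm equivalence yields $\|\Curl r_h\|_0\le C(h^s\|\bfu\|_s+h\|\Div\bfu\|_0)$. Collecting the divergence and $L^2$ parts and applying the regularity bound gives $\|\bfu-\bfuSA\|_{\Div}\le Ch^s\|\bfu\|_{V_0}$, that is, SA holds with $\omega_2(h)=Ch^s$; the argument is identical for $\bho$ and $\bhs$.
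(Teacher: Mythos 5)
Your proof is correct, but it takes a genuinely different route from the paper's. The paper argues by contradiction and compactness: assuming SA fails along a sequence of meshes, it uses the compact embedding of $\V_0$ into $\V$ to extract a limit $\bfu$, builds exactly the corrected interpolant you build ($\uhn=\uIn-\Curl\phn\in\Kbhnrot$, with $\phn$ solving the same discrete problem as your $r_h$), but then --- having only the uniform bound $\|\Curl\phn\|_0\le C\|\uIn\|_0$, not smallness --- it invokes the SDCP to extract a strongly convergent subsequence of $\{\uhn\}$ and identifies its limit with $\bfu$ by showing that the difference is divergence-free and rotation-free, reaching a contradiction. You replace this compactness step with a direct quantitative bound on the correction: inserting the exact orthogonality $(\bfu,\Curl r_h)=0$ and using the projection identity $\bho^E(\bfu^I,\Curl r_h)-(\bfu^I,\Curl r_h)_E=-\big((\mathbb{I}-\PE)\bfu^I,(\mathbb{I}-\PE)\Curl r_h\big)_E$ --- essentially the same trick the paper uses inside Lemma~\ref{lem:lem7_rodo} --- together with Lemmas~\ref{lem:estPE} and~\ref{lem:interpol_est} and the norm equivalence, you obtain $\|\Curl r_h\|_0\le C(h^s\|\bfu\|_s+h\|\Div\bfu\|_0)$. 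This buys two things the paper's proof does not give: an explicit rate $\omega_2(h)=Ch^s$, and the observation that SDCP is never used, so that hypothesis is redundant under your argument (SA follows from norm equivalence, the approximation lemmas, and regularity of $\V_0$ alone). The one point you assert without proof is the uniform a priori bound $\|\bfu\|_s+\|\Div\bfu\|_1\le C\|\bfu\|_{V_0}$; it is true and standard --- test~\eqref{eq:kikcont_source} with $\bfv=\bfu$ and $\bfv=\Curl p$ to get $\|\Div\bfu\|_0+\|\Curl p\|_0\le C\|\bff\|_0$, use Lemma~\ref{lem:lem2_rodolfo} (with vanishing divergence-free part, since $\bfu$ is rot-free with zero normal trace) for $\|\bfu\|_s$, and the distributional identity $\Grad\Div\bfu=\Curl p-\bff$ for $\|\Div\bfu\|_1$, then take the infimum over admissible $\bff$ --- but it should be spelled out, since the SA definition requires $\omega_2(h)$ to be uniform over $\V_0$, and this is exactly where that uniformity enters your proof (the paper's contradiction argument gets uniformity for free from the sequence construction).
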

\begin{proof}
By contradiction, assume that the SA is not satisfied. 
This means that $\exists\epsilon_0>0$ such that we can construct a
decreasing sequence of mesh sizes $\{h_n\}$ tending to zero as $n$ goes to
$\infty$ and a sequence of functions $\{ \bfu(n)\}\subset\V_0$ with the
property that $\|\bfu(n)\|_{\V_0}=1$ and 
\begin{equation}  
\label{eq:contradiction_SA}
\inf_{\vhn\in\Kbhnrot}  \|\bfu(n) - \vhn \|_{\Div} \geq \epsilon_0.
\end{equation} 

Since $\V_0$ is compact in $\V$, see Definition~\ref{def:solution_spaces},
there exists a subsequence $\{\bfu(n)\}$ (denoted with the same symbol) that
converges strongly to $\bfu$ in $\V$. 
Moreover, since for each element of the subsequence $\bfu(n)\in\V_0$ it holds
true $\Rot\bfu(n)=0$, then $\Rot \bfu = 0$. As a consequence we have that
$\bfu\in\Grad(H^1(\Omega))$ and thus that $\bfu\in\mathbf{H}^s(\Omega)$ for
some $s>1/2$ thanks to Lemma~\ref{lem:lem2_rodolfo}.
Hence we can define the interpolant $\uIn\in\Vhn$ and, from
Lemma~\ref{lem:interpol_est} we have
\begin{equation}
\label{eq:approx_property}
\|\bfu-\uIn\|_0\le C(h^s\|\bfu\|_{s}+h\|\Div\bfu\|_0).
\end{equation}
Moreover, since $\bfu$ is in the closure of
$\V_0$ and the fact $\Div\uIn=P_k(\Div\bfu)$, we deduce that
\[
    \|\bfu - \uIn \|_{\Div}\rightarrow 0, \text{ as } n\rightarrow \infty.
\]
We observe that in general $\uIn$ does not belong to $\Kbhnrot$.

Let us consider $\phn\in\Qhn$, the solution of the following equation
\begin{equation}
\label{eq:auxiliary_prob}
\bhn(\Curl \phn, \Curl \qhn) = \bhn(\uIn, \Curl \qhn) \quad
\forall\qhn\in\Qhn,
\end{equation} 
which exists thanks to the equivalence of the seminorm $|\cdot|_h$ and the
$L^2$-norm.
Moreover, by choosing the test function $\qhn$ in~\eqref{eq:auxiliary_prob} to
be $\phn$ and utilizing the equivalence of norms, we have
\begin{equation*}
\aligned
  \|\Curl \phn \|^2_{0}&\le C|\Curl \phn |^2_h
  = C\bhn(\Curl \phn, \Curl \phn) &\\ 
  &= C\bhn(\uIn, \Curl \phn)
  \le C|\uIn|_h |\Curl \phn|_h \\
  &\le C\|\uIn\|_{0} \|\Curl \phn\|_{0}.
\endaligned
\end{equation*}
Thus, $\|\Curl \phn \|_{0} \leq C \|\uIn\|_{0}$ and we conclude that
$\|\Curl\phn\|_{\Div}$ is uniformly bounded thanks
to~\eqref{eq:approx_property}.

For each $n>0$, we now consider the element in $\Vhn$ given by
\[
\uhn = \uIn-\Curl \phn,
\]
which belongs to $\Kbhnrot$ due to 
\[
\bhn(\uhn,\Curl\qhn)=\bhn(\uIn-\Curl \phn,\Curl\qhn)=0\quad\forall\qhn\in\Qhn.
\]
The sequence $\{ \uhn\}$ is uniformly bounded in $\V$, indeed we have
\[
\|\uhn\|_{\Div}\leq \|\uIn\|_{\Div} + \|\Curl \phn\|_{\Div}.
\]  
The first norm on the right hand side is bounded since $\uIn$ converges
strongly in $\V$ and we already know that $\Curl\phn$ is uniformly bounded in
$\V$.
From the above we have $\uhn\in\Kbhnrot$ such that the following are satisfied:
\begin{enumerate}
  \item $ \bhn(\uhn, \Curl \qhn) = 0$, for all $\qhn\in\Qhn $
  \item the sequence $\{ \uhn\}$ is uniformly bounded in $\V$ i.e.
    $ \|\uhn\|_{\Div} \leq C$
  \item $\Div \uhn = \Div \uIn$.
\end{enumerate}
Points (1) and (2) meet the conditions of SDCP for $\{\uhn\}$, that guarantees
that there exists a subsequence, still denoted $\{\uhn\}$, which converges
strongly to $\ubar$ in $\Ltwovecomega$ with $\Rot\ubar = 0$.
Moreover, from point (3) we have
\[
\Div \uhn=\Div\uIn\rightarrow \Div \bfu\quad\text{ in }\Ltwovecomega.
\]
In order to contradict~\eqref{eq:contradiction_SA}, we use the triangle
inequality and we get
\[
\|\bfu(n) -\uhn\|_{\Div}\le\|\bfu(n)-\bfu\|_{\Div}+\|\bfu-\uhn\|_{\Div}.
\]
Since the first term on the right hand side tends to zero, the result will follow if we can 
show that $\bfu=\ubar$.\\%
To this end, let us assume that $\bfw =\bfu-\ubar$.
From the definition of $\uhn$, we have 
\[ 
\Curl \phn = \uIn -\uhn\rightarrow\bfu-\ubar=\bfw,\quad\text{as }n\to\infty.
\]
Therefore, 
\[
\bfw = \lim_{n\rightarrow\infty} (\uhn -\uIn) 
\text{ and } 
\Div \bfw = \lim_{n\rightarrow\infty} \Div(\uhn -\uIn) =0.
\]
Moreover, since we proved $\Rot\bfu = 0 $ and $\Rot\ubar = 0 $,
for $\bfw\in\V$, we have $\Rot \bfw=0$ and $\Div\bfw=0$ thus $\bfw=0$.
\end{proof}
Propositions~\ref{prop:SDCP_gives_EDK},~\ref{prop:SDCP_gives_WA},
and~\ref{prop:SDCPAndapproxProp_gives_SA} show that having SDCP with the
equivalence of norms results in EDK, WA and SA, which in turn imply the
convergence of $T_h$ to $T$, see Theorem~\ref{th:vem_uniform_conv_sol_operator}.

\subsection{Stabilized VEM spaces}

The theory developed above allows to conclude that the stabilized
formulation provides a correct spectral approximation of our problem. This is
the same result as the one obtained in~\cite{Rodolfo} which is now rigorously
proved thanks the proof of Lemma~\ref{lem:interpol_est}.
\begin{theorem}

Let $\Vh$ be a sequence of VEM spaces
as defined in Section \ref{sec:the_virtual_element_discretization}
and consider the discretized
problem~\eqref{eq:pbdisc}, where the bilinear form $\bh$ is given by the
stabilized form $\bhs$. Then the sequence of discrete solution operators
$\{T_h\}$ converges uniformly to the solution operator $T$ in the spirit of
Theorem~\ref{th:vem_uniform_conv_sol_operator}.

\end{theorem}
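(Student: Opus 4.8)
The plan is to recognize this statement as a direct corollary of the abstract machinery assembled in the preceding subsections: the entire weight of the argument has already been carried by Theorems~\ref{th:vem_uniform_conv_sol_operator} and~\ref{th:SDCP} together with Propositions~\ref{prop:SDCP_gives_EDK},~\ref{prop:SDCP_gives_WA} and~\ref{prop:SDCPAndapproxProp_gives_SA}. The only genuinely new input needed for the stabilized case is to verify that the hypotheses shared by all of those results are met, namely that the seminorm $|\cdot|_h$ associated with $\bh=\bhs$ is in fact a norm equivalent to the $\Ltwovecomega$ norm. First I would record this: by construction of the stabilized form in~\eqref{eq:stab_bh} and the bounds stated immediately afterwards, there exist $\underline{\beta},\overline{\beta}>0$ with $\underline{\beta}\|\bfv_h\|_0^2\le\bhs(\bfv_h,\bfv_h)\le\overline{\beta}\|\bfv_h\|_0^2$ for all $\bfv_h\in\Vh$. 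In particular $\kerbh=\{\mathbf 0\}$, so the discrete problem~\eqref{eq:pbdisc} is well posed in the sense discussed in Section~\ref{sec:pencil}, and the seminorm equivalence hypothesis invoked throughout is satisfied.

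With this equivalence in hand, the chain of implications runs as follows. Applying Theorem~\ref{th:SDCP}, whose single hypothesis is precisely the equivalence of $|\cdot|_h$ with the $L^2(\Omega)$ norm, I obtain that the pair $(\Vh,\Qh)$ enjoys the Strong Discrete Compactness Property. From the SDCP I then deduce the three structural properties required by the convergence theorem: Proposition~\ref{prop:SDCP_gives_EDK} yields the ellipticity in the discrete kernel (EDK), Proposition~\ref{prop:SDCP_gives_WA} yields the weak approximability of $Q_0$ (WA), and Proposition~\ref{prop:SDCPAndapproxProp_gives_SA} yields the strong approximability of $\V_0$ (SA). I would note that this last implication additionally requires the interpolation estimate of Lemma~\ref{lem:interpol_est}, which holds for the VEM space of Section~\ref{sec:the_virtual_element_discretization}, and again the norm equivalence, both of which are available here.

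Having established EDK, WA and SA for the stabilized discretization, I would close the argument by a direct appeal to Theorem~\ref{th:vem_uniform_conv_sol_operator}, which guarantees the existence of $\omega_3(h)\to0$ such that $\|T\bff-T_h\bff\|_{\Div}\le\omega_3(h)\|\bff\|_0$ for all $\bff\in\Ltwovecomega$; this is exactly the asserted uniform convergence of $\{T_h\}$ to $T$. The proof is therefore essentially a verification-and-assembly argument rather than a computation. The main (and in fact only) obstacle is the very first step, confirming the $L^2$-equivalence of the stabilized seminorm, since this is the hypothesis on which every subsequent result hinges; everything else is a mechanical invocation of the abstract theory already in place. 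It is worth emphasizing, as a remark, that this is precisely the step that will \emph{fail} for the unstabilized choice $\bh=\bho$, which is why the stabilization-free case of Section~\ref{sec:stabfree} must be treated by separate arguments.
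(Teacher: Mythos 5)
Your proposal is correct and matches the paper's own proof, which likewise reduces everything to checking that the stabilized form $\bhs$ makes $|\cdot|_h$ equivalent to the $\Ltwovecomega$ norm and then invokes the abstract machinery (SDCP via Theorem~\ref{th:SDCP}, then EDK, WA, SA, then Theorem~\ref{th:vem_uniform_conv_sol_operator}). Your version simply spells out the chain of implications that the paper compresses into ``the proof follows the lines of the previous analysis,'' including the correct observation that Lemma~\ref{lem:interpol_est} is the other ingredient needed for the SA step.
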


\begin{proof}

The proof follows the lines of the previous analysis. The only condition that
needs to be checked is that the seminorm $|\cdot|_h$ is equivalent to the
$\Ltwovecomega$ scalar product. In the case of $\bh=\bhs$ this is a
consequence of the presence of the stabilization term
(see~\cite{Rodolfo,basic_principle}).

\end{proof}

\section{Stabilization free elements}
\label{sec:stabfree}

Our numerical results, see Section~\ref{sec:numerical_experiments}, show that
in several cases the stabilization is not necessary. A general proof of this
statement is not immediate, but we are able to discuss in more detail the case
of triangular elements of lowest order.

More precisely, we are going to prove rigorously that for lowest order
triangular elements the results of
Theorem~\ref{th:vem_uniform_conv_sol_operator} are valid although the seminorm
$|\cdot|_{h,0}$ is not equivalent to the $L^2(\Omega)$ norm.

We start by showing in the next proposition that the kernel ${\kerbh}_{,0}$ is
reduced to $\{0\}$.

\begin{proposition}
\label{pro:tri_rect_mesh_kernel0}
In the case of a triangular mesh and lowest order degree ($k=0$), the space
${\kerbh}_{,0}$ is reduced to $\{0\}$.

\end{proposition}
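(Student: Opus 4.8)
The plan is to exploit the very explicit structure of the lowest-order space. First I would identify the projector $\PE$: for $k=0$ the target space $\Grad(\Po_1(E))$ consists of constant vector fields, so the defining relation $(\PE\bfv_h-\bfv_h,\Grad q)_E=0$ for all $q\in\Po_1(E)$ says precisely that $\PE\bfv_h$ is the cellwise mean $\frac{1}{|E|}\int_E\bfv_h\,d\bfx$. Consequently the seminorm satisfies $|\bfv_h|_{h,0}^2=\sum_{E\in\T_h}\frac{1}{|E|}\left|\int_E\bfv_h\,d\bfx\right|^2$, so that ${\kerbh}_{,0}$ is exactly the set of $\bfv_h\in\Vh$ whose mean vanishes on every element.

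Next I would pin down $\VE$ itself. Any field of the form $\mathbf{c}+b\,\bfx$ with $\mathbf{c}\in\RE^2$ and $b\in\RE$ has constant normal trace on each straight edge, constant divergence $2b$, and vanishing rotation, hence belongs to $\VE$; since this family is three-dimensional and $\dim\VE=3$, the space $\VE$ coincides with the lowest-order Raviart--Thomas space $RT_0(E)$. Therefore $\bfv_h|_E=\mathbf{c}_E+b_E\bfx$, whose mean equals $\mathbf{c}_E+b_E\bfx_E$ with $\bfx_E$ the centroid of $E$. The zero-mean condition then forces $\mathbf{c}_E=-b_E\bfx_E$, i.e. $\bfv_h|_E=b_E(\bfx-\bfx_E)$ for a single scalar $b_E$: on each triangle an element of ${\kerbh}_{,0}$ is a scalar multiple of the radial field emanating from the centroid.

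The core of the argument is then a global propagation using $\Hdiv$-conformity. On an edge $e$ of $E$ the normal component of $b_E(\bfx-\bfx_E)$ is constant and equals $b_E\,d_E^e$, where $d_E^e>0$ is the distance from $\bfx_E$ to the line containing $e$ (strictly positive because the centroid lies on the interior side of the outward normal). For a boundary edge the condition $\bfv_h\cdot\bfn=0$ inherited from $\V=\Hodiv$ gives $b_E d_E^e=0$, whence $b_E=0$ on every triangle touching $\partial\Omega$. For an interior edge shared by $E$ and $E'$, continuity of the normal component yields $b_E d_E^e=\pm\, b_{E'}d_{E'}^e$; since both distances are strictly positive, $b_E=0$ implies $b_{E'}=0$. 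The set of elements with vanishing $b_E$ is thus nonempty and closed under edge-adjacency, and since the element-adjacency (dual) graph of a triangulation of the connected domain $\Omega$ is connected, all the $b_E$ vanish, so that $\bfv_h=\bfzero$.

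The main obstacle I anticipate is not any single estimate but making the two structural identifications airtight: first the equality $\VE=RT_0(E)$, which I would justify by the dimension count together with the inclusion $RT_0(E)\subset\VE$ verified above; and second the positivity of $d_E^e$, which is what guarantees that the interior-edge relations genuinely transmit the vanishing of $b_E$ across the mesh. Once these are in place, the propagation is purely combinatorial and relies only on connectedness of $\Omega$ and the presence of at least one boundary edge.
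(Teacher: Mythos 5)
Your proof is correct, and its global skeleton --- characterize the kernel condition element by element, then propagate the vanishing across interior edges starting from the boundary, using normal-trace continuity and the condition $\bfv_h\cdot\bfn=0$ on $\partial\Omega$ --- is exactly the paper's. Where you genuinely differ is the local step. The paper never identifies $\VE$ explicitly: it tests the condition $\PE\bfv_h=0$ against two zero-mean linear polynomials (built from midpoint values so that $\int_E q_j\,d\bfx=0$), which kills the divergence term in the integration by parts and yields the flux relations $\bfv_h\cdot\bfn_1|e_1|=\bfv_h\cdot\bfn_2|e_2|=\bfv_h\cdot\bfn_3|e_3|$; once one edge flux vanishes, all degrees of freedom of that element vanish, and the propagation runs at the level of DOFs. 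You instead identify $\VE=\m{RT}_0(E)$ by inclusion plus a dimension count (a fact the paper itself only invokes later, in Proposition~\ref{pro:equivTR0}), parametrize the zero-mean fields as centroid-radial fields $b_E(\bfx-\bfx_E)$, and exploit the strict positivity of the centroid-to-edge distances $d_E^e$. The two local characterizations are consistent --- your scaled fluxes $b_E d_E^e|e|$ all equal $\tfrac23 b_E|E|$, recovering the paper's equal-flux relations --- but yours is more explicit and reduces the kernel to a single scalar per element, which makes the propagation transparent. What the paper's DOF-level computation buys in exchange is robustness: it does not rely on the local space being a known polynomial space, which is why the authors can assert in the remark following the proposition that the same proof carries over to rectangular elements, where your explicit parametrization would have to be redone (the zero-mean fields there form a two-parameter family per element).
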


\begin{proof}

Let $\bfv_h \in\kerbh$  such that $\bho(\bfv_h,\bfv_h) = 0$. 
Our main objective is to show that $\bfv_h = 0$.
By the definition of $\bho$, we have
\[
0 = \bho(\bfv_h,\bfv_h)=\sum_{E\in\T_h} \int_E |\PE\bfv_h|^2,
\] 
which implies that, 
\[
|\PE\bfv_h| = 0, \ \forall E.
\]
By the definition of the projection $\PE$, we have 
for all $q\in\Po_1(E)$
\begin{equation}
\aligned
0 &= \int_E\PE \bfv_h \cdot \Grad q \ d \bfx 
= \int_E \bfv_h \cdot \Grad q\ d\bfx \quad  \\
&= -\int_E \Div \bfv_h \ q \ d\bfx+ \int_{\partial E}\bfv_h \cdot \bfn \ q \ dS \\
&= -\Div \bfv_h \int_E q \ d\bfx + \int_{\partial E} \bfv_h \cdot \bfn \ q \ dS
\endaligned
\label{eq:vanishing_qs}
\end{equation}
where we have used integration by parts and the fact that $\Div \bfv_h$ is a constant. 
Notice that our degrees of freedom on each edge $e_i$ are $\int_{e_i} \bfv_h \cdot \bfn \ \ dS$. 
Looking at triangular elements, we choose two linear functions for $q$ namely,
$q_1$ and $q_2$. In order to evaluate $\int_E q \ d\bfx$ exactly, we choose
a quadrature rule of order $2$ with  quadrature points located at the midpoints $p_1,p_2,p_3$
of each edge. Setting the values of $q_1$ at point $p_1$ to be $+1$, $-1$ at point $p_2$, and zero at point $p_3$. For $q_2$, we have the value at $p_1$ to be $-1$, at $p_2$ to be zero and $1$ at $p_3$. Therefore, having $\int_E q_j \ d\bfx =0$ for $j=1,2$.     
Using that $\bfv_h\cdot\bfn$ is constant on each edge, Equation~\eqref{eq:vanishing_qs} reduces to
\[
0 =  \int_{\partial E} \bfv_h \cdot \bfn \ q_j \  dS = \sum_{i=1}^{3}  \bfv_h \cdot \bfn_i \int_{e_i} q_j \ dS \quad j=1,2 
\]
and we get the system
\[
\aligned
&q_1:&& \bfv_h\cdot\bfn_1|_{e_1}|e_1|-\bfv_h\cdot\bfn_2|_{e_2}|e_2|&&= 0\\
&q_2:&& -\bfv_h\cdot\bfn_1|_{e_1}|e_1|+\bfv_h\cdot\bfn_3|_{e_3}|e_3|&&= 0,
\endaligned
\]
hence, we get
\[
\bfv_h\cdot \bfn_1|_{e_1}|e_1| =
\bfv_h\cdot \bfn_2|_{e_2}|e_2| = 
\bfv_h\cdot \bfn_3|_{e_3}|e_3|.
\]
Therefore, starting from an element $E$ with an edge on the boundary with homogeneous Dirichlet
boundary conditions, we obtain that $\bfv_h$ is vanishing in $E$. Then we can
propagate with similar arguments to the rest of the domain and finally have
$\bfv_h = 0$ everywhere in $\Omega$.
\end{proof}


\begin{remark}

The same proof works for meshes of rectangles and lowest order elements. The
last arguments can be used by taking into account the boundary conditions and
\emph{killing} the degrees of freedom of the neighboring elements. For higher
sided polynomials the result is no longer valid (we don't know what is the
threshold, probably already the pentagon is bad).

For higher order schemes we have not investigated the topic theoretically.
From numerical tests and by counting the number of degrees of freedom and the
number of conditions, it seems plausible that the result is true when the
degree is large enough compared to the number of sides of the elements.

\end{remark}

The next proposition is the crucial ingredient that will replace the
equivalence of the $|\cdot|_{h,0}$ seminorm and the $L^2(\Omega)$ norm.

\begin{proposition}
\label{pro:equivTR0}
Let us consider a triangular mesh and lowest order degree $k=0$. 
Then $\bho(\bfv_h,\Curl q_h)=0$ for all $q_h\in\Qh$ if and only if
$(\bfv_h,\Curl q_h)=0$ for all $q_h\in\Qh$.
\end{proposition}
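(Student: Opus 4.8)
The plan is to prove the stronger pointwise statement that, in this setting,
\[
\bho(\bfv_h,\Curl q_h)=(\bfv_h,\Curl q_h)\qquad\forall\,\bfv_h\in\Vh,\ \forall\,q_h\in\Qh,
\]
so that the two families of orthogonality conditions coincide identically in $q_h$ and the claimed equivalence is immediate.

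The decisive step, which I would carry out first, is to show that $\Curl q_h$ is \emph{piecewise constant}, i.e.\ constant on each triangle $E$. By the compatibility condition~\eqref{eq:defQh} we have $\Curl q_h\in\kerdiv$, hence $\Curl q_h|_E\in\VE$ is divergence free. For $k=0$ on a triangle the space $\VE$ has dimension three (the three edge degrees of freedom, with no interior ones since $\Po_0(E)/\RE=\{0\}$), while the divergence map $\Div\colon\VE\to\Po_0(E)$ is surjective; therefore its kernel, the divergence-free subspace of $\VE$, has dimension two. The two-dimensional space of constant vector fields is contained in this kernel --- constants are rotation free, divergence free, and have constant normal trace along each edge, hence lie in $\VE$ --- so the divergence-free subspace of $\VE$ coincides with the constants. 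Consequently $\Curl q_h|_E$ is constant.

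Since $\PE$ is the $\mathbf{L}^2(E)$-orthogonal projection onto $\Grad(\Po_1(E))$, which for $k=0$ is exactly the space of constant vectors, it reproduces constants; thus $\PE(\Curl q_h)=\Curl q_h$ on every $E$. Using this together with the defining property~\eqref{eq:PiE} of $\PE$ (applied with the constant gradient $\Grad q=\Curl q_h$), I would then compute element by element
\[
\bho(\bfv_h,\Curl q_h)=\sum_{E\in\T_h}(\PE\bfv_h,\PE\Curl q_h)_E=\sum_{E\in\T_h}(\PE\bfv_h,\Curl q_h)_E=\sum_{E\in\T_h}(\bfv_h,\Curl q_h)_E=(\bfv_h,\Curl q_h),
\]
which is the desired identity, and the equivalence of the two orthogonality conditions follows at once.

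The only genuinely non-trivial point is the dimension count identifying the divergence-free part of $\VE$ with the constant vector fields (equivalently, observing that $\Qh$ reduces to the conforming piecewise-linear space, so that $\Curl q_h$ is piecewise constant); once this is in place, the remainder is routine manipulation of the projection $\PE$. I would expect this identification to be the main obstacle, and also the reason the argument is special to lowest order triangles: for higher order, or for elements with more sides, the divergence-free elements of $\VE$ need not be reproduced by $\PE$, and the identity above is expected to fail.
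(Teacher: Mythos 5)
Your proof is correct, and its computational core coincides with the paper's: both establish the stronger pointwise identity $\bho(\bfv_h,\Curl q_h)=(\bfv_h,\Curl q_h)$ by showing that $\Curl q_h$ is constant on each triangle, deducing $\PE\Curl q_h=\Curl q_h$, and then using the defining orthogonality~\eqref{eq:PiE} of $\PE$ in exactly the chain of equalities you wrote. Where you genuinely differ is in the justification of the key step, the piecewise constancy of $\Curl q_h$. The paper obtains it by identifying $\Vh$ with the lowest-order Raviart--Thomas space $\m{RT}_0$ and asserting that $\Qh$ is then the conforming piecewise-linear subspace of $\Honezero$ --- a classical fact about the $\m{RT}_0$ complex that is quoted rather than proved. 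You instead derive it intrinsically: since $\Curl q_h\in\kerdiv$ by~\eqref{eq:defQh}, its restriction to $E$ lies in the divergence-free part of $\VE$, and your dimension count ($\dim\VE=3$, $\Div:\VE\to\Po_0(E)$ surjective, the two-dimensional space of constant fields contained in the kernel) shows that this part is exactly the constants. This makes your argument self-contained within the VEM framework, at the small price of the surjectivity claim, which deserves one line --- e.g.\ the field $\tfrac12(\bfx-\bfx_0)$ belongs to $\VE$ and has divergence $1$, because $\bfx\cdot\bfn$ is constant along each straight edge. A further minor improvement over the paper: your proof makes explicit that the identity holds for \emph{every} $\bfv_h\in\Vh$, so the appeal to Proposition~\ref{pro:tri_rect_mesh_kernel0} made at the start of the paper's proof is not actually needed for this equivalence.
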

\begin{proof}
The discrete space $\Vh$ coincides with the space of Raviart--Thomas elements of
lowest order $\m{RT}_0$. Notice that while the matrix $\m{A}$ associated with 
the bilinear form $(\Div\cdot,\Div\cdot)$ is the same as the one obtained with
Raviart--Thomas elements, the matrix $\m{B}$ is different. In particular, in the
case of $\m{RT}_0$ the matrix $\m{B}$ is computed using the $L^2(\Omega)$
scalar product of the basis functions, while the virtual element matrix
$\m{B}$ is computed using the projection operator defined in~\eqref{eq:PiE}. 
Moreover, we take as $\Qh$ the subspace of $\Honezero$ of piecewise linear
polynomials. Therefore $\Curl q_h$ is a piecewise constant function for all
$q_h\in\Qh$.

Let us consider an element $\bfv_h\in\Vh$, with $\bfv_h\neq0$, such that $\bho(\bfv_h,\Curl q_h)=0$
for all $q_h\in\Qh$.
We have shown in Proposition~\ref{pro:tri_rect_mesh_kernel0} that the kernel
${\kerbh}_{,0}$ is reduced to $\{0\}$, hence $\bfv_h$ is not an element of the
kernel.

Given $\bfv_h\in\Vh$, by definition of $\bho$ we have 
\[
\bho(\bfv_h,\Curl q_h)
=\sum_{E\in\T_h}\int_E\PE\bfv_h\cdot\PE\Curl q_h\, d\bfx 
\quad\forall q_h\in\Qh.
\]
Since $\Curl q_h$ is a piecewise constant, it is the
gradient of some linear polynomial in $E$, that is 
$\Curl q_h|_E\in\Grad\Po_1(E)$ for each $E\in\T_h$. Hence in each
element $E$ $\PE\Curl q_h=\Curl q_h$. Next using the definition of the
projection operator $\PE$ we have
\[
\int_E\PE\bfv_h\cdot\PE\Curl q_h\, d\bfx
=\int_E\PE\bfv_h\cdot\Curl q_h\, d\bfx
=\int_E \bfv_h\cdot\Curl q_h\, d\bfx.
\]
Summing on the element we arrive at
\[
\aligned
\bho(\bfv_h,\Curl q_h)
&=\sum_{E\in\T_h}\int_E\PE\bfv_h\cdot\PE\Curl q_h\, d\bfx\\
&=\sum_{E\in\T_h}\int_E \bfv_h\cdot\Curl q_h\, d\bfx=(\bfv_h,\Curl q_h).
\endaligned
\] 
Hence we conclude that
\[
\bho(\bfv_h,\Curl q_h)=(\bfv_h,\Curl q_h).
\]
Therefore if $\bfv_h\in\Vh$ is such that 
\[
\bho(\bfv_h,\Curl q_h)=0\quad\forall q_h\in\Qh,
\]
then 
\[
(\bfv_h,\Curl q_h)=0\quad\forall q_h\in\Qh
\]
and viceversa.

\end{proof}

We are now in a position to state and prove the main result of this section.

\begin{theorem}

Let $\Vh$ be a sequence of spaces defined on triangular meshes for $k=0$ and
consider the case of the non-stabilized scheme, that is $\bh=\bho$, then
the discrete sequence $\{T_h\}$ converges in norm to $T$ in $\V$ as $h$ tends
to zero, in the spirit of Theorem~\ref{th:vem_uniform_conv_sol_operator}.

\end{theorem}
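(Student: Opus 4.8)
The goal is to run the entire abstract convergence machinery of Theorem~\ref{th:vem_uniform_conv_sol_operator} for the non-stabilized scheme on lowest-order triangles, even though the key hypothesis used throughout---equivalence of $|\cdot|_{h,0}$ with the $L^2$-norm---fails here. The strategy is to show that every place where that equivalence was invoked can be replaced by Proposition~\ref{pro:equivTR0}, which says that on triangular $k=0$ meshes $\bho(\bfv_h,\Curl q_h)=0$ is equivalent to $(\bfv_h,\Curl q_h)=0$. In other words, the discrete kernel coincides with the genuine $L^2$-orthogonality kernel, and this is exactly what each lemma secretly needed.

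**Key steps, in order.**
First I would revisit the three structural results SDCP, WA, SA and the consistency estimate, checking that each goes through. The SDCP proof (Theorem~\ref{th:SDCP}) used norm-equivalence only to invoke Lemma~\ref{lem:lem7_rodo}, so I would verify that Lemma~\ref{lem:lem7_rodo}'s Helmholtz decomposition and its bounds still hold with $\bho$; this is where Proposition~\ref{pro:equivTR0} enters, giving the crucial orthogonality $\bho(\bfv_h,(\Grad p)^I-\bfv_h)=0$ and the cancellation against $(\Grad p,(\Grad p)^I-\bfv_h)=0$. The consistency Proposition~\ref{prop:consistancy} involves only the projector estimates of Lemmas~\ref{lem:estPE} and~\ref{lem:interpol_est}, so it is unaffected. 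Next, Proposition~\ref{pro:tri_rect_mesh_kernel0} has already established ${\kerbh}_{,0}=\{0\}$, which guarantees that Problem~\eqref{eq:pbdisc} is not a singular pencil and that the discrete eigenfunction orthogonalities hold. Then I would reinspect Propositions~\ref{prop:SDCP_gives_EDK},~\ref{prop:SDCP_gives_WA} and~\ref{prop:SDCPAndapproxProp_gives_SA}: WA and EDK use SDCP only as a black box and carry over; SA is the delicate one, because its proof solves the auxiliary problem~\eqref{eq:auxiliary_prob} using norm-equivalence to get existence and the bound $\|\Curl\phn\|_0\le C\|\uIn\|_0$.

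**The main obstacle.**
The hard part will be reproducing the strong approximability argument without norm-equivalence. Solvability of~\eqref{eq:auxiliary_prob} requires that $\bhn(\Curl\cdot,\Curl\cdot)$ be nondegenerate on $\Qhn$, and the uniform bound on $\|\Curl\phn\|_0$ needs an inequality of the form $\|\Curl\phn\|_0^2\le C\,\bho(\uIn,\Curl\phn)$. Here I would use Proposition~\ref{pro:equivTR0} to replace $\bho(\Curl\phn,\Curl\phn)$ by $(\Curl\phn,\Curl\phn)=\|\Curl\phn\|_0^2$ exactly, and likewise $\bho(\uIn,\Curl\phn)$ by $(\uIn,\Curl\phn)$, so that the auxiliary problem becomes the ordinary $L^2$-projection of $\uIn$ onto $\Curl\Qhn$; this is manifestly solvable and yields $\|\Curl\phn\|_0\le\|\uIn\|_0$ by Cauchy--Schwarz, bypassing the missing norm-equivalence entirely. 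Once SA is secured, EDK, WA and SA hold, and Theorem~\ref{th:vem_uniform_conv_sol_operator} applies verbatim to give norm convergence of $\{T_h\}$ to $T$. I therefore expect the proof to read: cite Proposition~\ref{pro:equivTR0} to identify $\Kbhrot$ with the $L^2$-orthogonal kernel, observe that this identification makes every step of Sections~\ref{sec:abstract_theory} valid with $\bho$ in place of a norm-equivalent form, and conclude.
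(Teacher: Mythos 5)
Your proposal is correct, and for most of the argument it coincides with the paper's own proof: both repair Lemma~\ref{lem:lem7_rodo} (hence Theorem~\ref{th:SDCP}, and with it EDK and WA via Propositions~\ref{prop:SDCP_gives_EDK} and~\ref{prop:SDCP_gives_WA}) by using Proposition~\ref{pro:equivTR0} to identify $\Kbhrot$ with the genuine $L^2$-orthogonal kernel, so that the Helmholtz estimate reduces to the purely $L^2$-based result of \cite[Lemma~7]{Rodolfo}. The genuine difference is in the strong approximability (SA) step. The paper abandons the abstract Proposition~\ref{prop:SDCPAndapproxProp_gives_SA} altogether: exploiting the coincidence $\Vh=\m{RT}_0$, it defines $\bfuSA$ as the solution of the classical mixed projection problem $(\bfu_h,\bfv)+(\Div\bfv,p_h)=0$, $(\Div\bfu_h,q)=(\Div\bfu,q)$, checks $\bfuSA\in\Kbhrot$ by testing with $\bfv\in\Curl\Qh$ (for which the ``if and only if'' statement of Proposition~\ref{pro:equivTR0} suffices), and obtains the required convergence from standard Raviart--Thomas theory. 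You instead keep the abstract contradiction proof and repair its only fragile point, the auxiliary problem~\eqref{eq:auxiliary_prob}: using the identity $\bho(\bfw_h,\Curl q_h)=(\bfw_h,\Curl q_h)$ valid for all $\bfw_h\in\Vh$ and $q_h\in\Qh$, that problem becomes the plain $L^2$-projection of $\uIn$ onto $\Curl\Qhn$, which is trivially solvable and gives $\|\Curl\phn\|_0\le\|\uIn\|_0$; the remainder of that proof uses no norm equivalence, so SA follows and Theorem~\ref{th:vem_uniform_conv_sol_operator} applies. Both routes work: the paper's is shorter and inherits quantitative rates from RT estimates, while yours stays entirely inside the abstract framework and would carry over to any setting in which the identity $\bho(\cdot,\Curl q_h)=(\cdot,\Curl q_h)$ holds, without invoking RT approximation theory for the SA construction.

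Two points of precision. First, your SA repair needs the identity $\bho(\bfw_h,\Curl q_h)=(\bfw_h,\Curl q_h)$, which is what the \emph{proof} of Proposition~\ref{pro:equivTR0} establishes; the statement of that proposition (equivalence of the two orthogonality conditions) is not by itself enough to replace $\bho(\uIn,\Curl\qhn)$ by $(\uIn,\Curl\qhn)$ when these are nonzero, so cite the identity explicitly. Second, in the repair of Lemma~\ref{lem:lem7_rodo} the step that actually fails for $\bho$ is the opening inequality $\underline{c}\|\bfpsi\|_0^2\le|\bfpsi|_h^2$ (the lower bound of the norm equivalence, cf.\ Example~\ref{ex:counterexample}), whereas the orthogonality $\bh(\bfv_h,(\Grad p)^I-\bfv_h)=0$ you highlight holds in any case by~\eqref{eq:defQh}; the correct fix, implicit in your global plan, is to use the $L^2$-orthogonality of $\bfv_h$ to $\kerdiv=\Curl\Qh$ provided by Proposition~\ref{pro:equivTR0} and estimate $\|\bfpsi\|_0^2=(\bfpsi,\Grad p-(\Grad p)^I)$ directly in $L^2$, never passing through $|\cdot|_h$. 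Relatedly, Proposition~\ref{prop:consistancy} does use Lemma~\ref{lem:lem7_rodo}, not only Lemmas~\ref{lem:estPE} and~\ref{lem:interpol_est}, but since that lemma is repaired first in your ordering, the conclusion is unaffected.
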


\begin{proof}

From the abstract setting of Section~\ref{sec:abstract_theory}, if follows
that the only missing property needed for our result is the equivalence of the
$|\cdot|_h$ seminorm and the $L^2(\Omega)$ norm.

We will see that in our case this property is not satisfied (see
Example~\ref{ex:counterexample} below), but we can modify our proofs accordingly as
follows.

The first occurrence where the equivalence of norms has been used, is for the
proof of Lemma~\ref{lem:lem7_rodo}. In this case, we can take advantage of the
equivalence between the orthogonality with respect to $b_h(\cdot,\cdot)$ and
the $L^2$ scalar product, proved in Proposition~\ref{pro:equivTR0}. It turns
out that in this setting Lemma~\ref{lem:lem7_rodo} is identical to the
corresponding Lemma proved in~\cite[Lemma~7]{Rodolfo}. Hence we can use that
result to get the appropriate estimate for the Helmholtz decomposition and to
prove Theorem~\ref{th:SDCP} that gives the SCDP property.

Then all the results of our abstract setting hold true until the crucial proof
of the SA property given in Proposition~\ref{prop:SDCPAndapproxProp_gives_SA}.

Since the space $\Vh$ coincides with the lowest order Raviart--Thomas space
$\m{RT}_0$, the SA property can be obtained directly by using the following
mixed problem: given $\bfu\in\V_0$, find $(\bfu_h,p_h)\in\m{RT}_0\times P_0$
(where $P_0$ is the space of piecewise constant functions with global zero
mean value) such that
\[
\left\{
\aligned
&(\bfu_h,\bfv)+(\Div\bfv,p_h)=0&&\forall\bfv\in\m{RT}_0\\
&(\Div\bfu_h,q)=(\Div\bfu,q)&&\forall q\in P_0.
\endaligned
\right.
\]
Taking $\bfuSA=\bfu_h$ we have that $\bfuSA$ belongs to ${\Kbhrot}$, as it
can be easily seen by choosing $\bfv\in\Curl\Qh$ in the mixed problem, and
satisfies the uniform convergence required for the SA property to be valid.

\end{proof}

We conclude this section by showing that the equivalence between the
$|\cdot|_{h,0}$ seminorm and the $L^2(\Omega)$ norm is not satisfied by the
lowest order triangular elements.

\begin{example}
\label{ex:counterexample}

We construct a sequence of vectors $\bfv_h\in\Vh$ such that
\[
|\bfv_h|_{h,0}\to0\qquad\text{and}\qquad\|\bfv_h\|_0\ge\alpha>0
\]
as $h$ goes to zero.

Let us consider $\Omega=(0,1)^2$ decomposed into $N\times N$ squares, each
divided into two triangles by its diagonal, see Figure~\ref{fg:lucia1}. The
mesh is divided into two regions: the internal one $\T_I$ consisting of
$2(N-2)^2$ elements, highlighted in gray, and the collection $\T_B$ of the
remaining $8(N-1)$ elements touching the boundary. Analogously, the domain
$\Omega$ is seen as the union of $\Omega_I$ and $\Omega_B$.
\begin{figure}
\begin{center}
\includegraphics[width=6cm]{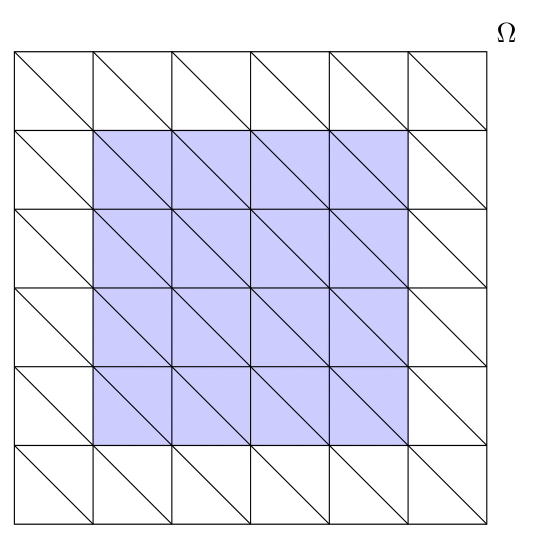}
\caption{Domain with subregions}
\label{fg:lucia1}
\end{center}
\end{figure}

From the proof of Proposition~\ref{pro:tri_rect_mesh_kernel0} it follows
that in each triangle $E\in\T_I$ there exists a function $\bfv_E\in\VE$
such that
\[
\bE(\bfv_E,\bfv_E)=0.
\]
Moreover, these functions can be combined together like in
Figure~\ref{fg:lucia2} in order to construct a function $\bfv_I$ in
$\mathbf{H}(\Div;\Omega_I)$. It turns out that the length of the vectors can
be taken equal to $1$ on the horizontal and vertical sides, and equal to
$1/\sqrt{2}$ along the diagonals.
\begin{figure}
\begin{center}
\includegraphics[width=6cm]{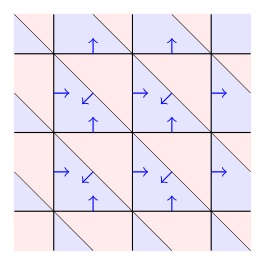}
\caption{Zoom of the interior region $\Omega_I$ with the vectors} 
\label{fg:lucia2}
\end{center}
\end{figure}

In particular, we have the important property that
\[
\sum_{E\in\T_I}\bE(\bfv_E,\bfv_E)=0.
\]

An explicit computation shows that
\[
\int_{\Omega_I}|\bfv_I|^2=\sum_{E\in\T_I}\frac{1}{3N^2}=\frac{2(N-2)^2}{3N^2}\simeq\frac{2}{3}.
\]

The function $\bfv_I$ can be extended to a function $\bfv_h\in\Vh$ defined in
$\Omega$ by adding appropriate pieces on the boundary elements, as depicted in
Figure~\ref{fg:lucia3}.

\begin{figure}
\begin{center}
\includegraphics[width=6cm]{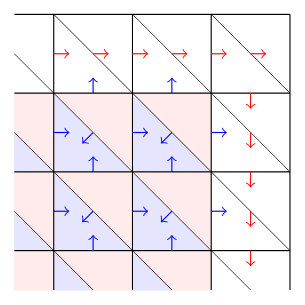}
\caption{Zoom of the right upper corner of $\Omega$ with the vectors}
\label{fg:lucia3}
\end{center}
\end{figure}

Since the area of $\Omega_B$ tends to zero as $N$ goes to infinity, it follows
that 
\[
\bho(\bfv_h,\bfv_h)=\sum_{E\in\T_B}\bE(\bfv_E,\bfv_E)\to0
\]
while
\[
\|\bfv_h\|_0^2\ge\int_{\Omega_I}|\bfv_I|^2\to\frac23
\]
as $N$ tends to infinity.

\end{example}

\section{Numerical experiments} 
\label{sec:numerical_experiments}
\begin{figure}[t]
    \centering{}
    \includegraphics[width=0.3\textwidth]{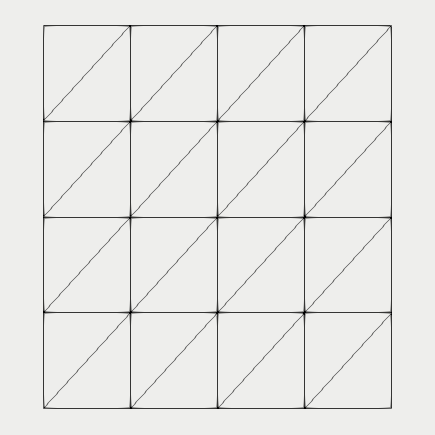}\hfill
    \includegraphics[width=0.3\textwidth]{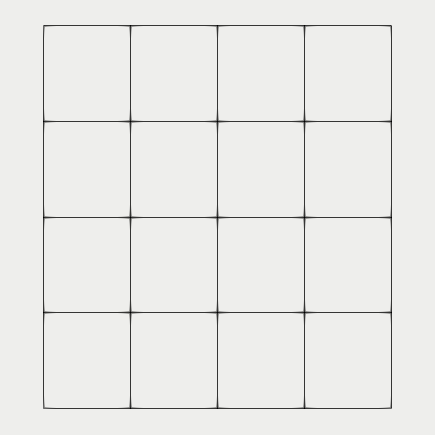}\hfill
    \includegraphics[width=0.3\textwidth]{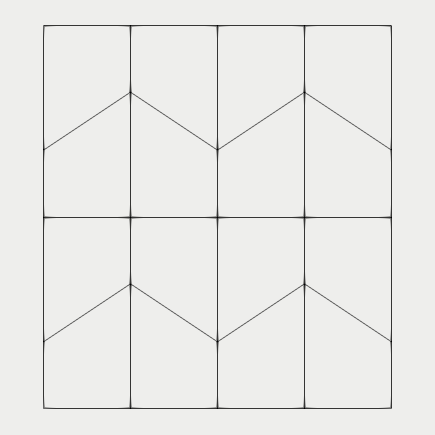}
    \caption{Examples of triangular, square, trapezoidal meshes: 
    $\T_h$ left, $\mathcal{Q}_h$ middle and $\mathcal{Z}_h$ right,
    corresponding to level $\ell=0$ 
}
    \label{fig:meshes_max_4_sided}
\end{figure}
\begin{figure}[t]
\centering{}
    \includegraphics[width=0.3\textwidth]{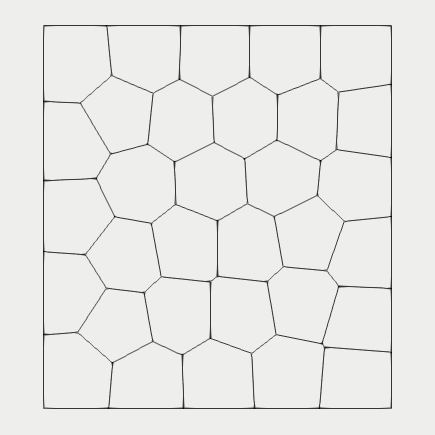}\hspace{0.5cm}
    \includegraphics[width=0.3\textwidth]{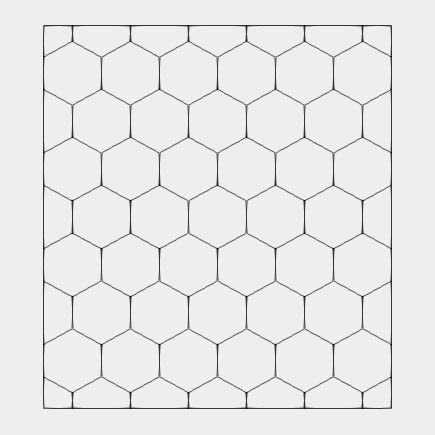}
    \caption{Examples of meshes: $\mathcal{V}_h$(left) and
             $\mathcal{H}_h$(right) corresponding to level $\ell=0$}
    \label{fig:meshes_non_4_sided}   
\end{figure}
In this section we present a series of numerical experiments related to the
scheme that we have analyzed. Specifically, formulation~\eqref{eq:pbdisc0} is
considered. We study two domains, a rectangle and an L-shape domain. In the
case of the rectangular domain we consider several kinds of meshes and we
separate the lowest-order case $k=0$, from the higher order cases with $1\leq k \leq 6$.

The numerical tests introduced in
Section~\ref{subsub:eigenvalues_for_nonstabilized_lowest_order} confirm the
theory presented in Section~\ref{sec:stabfree} where we consider the
non-stabilized lowest order VEM $k=0$ with different mesh structures.
In Section~\ref{subsub:higher_order_nonstabilized_vem} we investigate
non-stabilized higher order cases on the given meshes; it turns out that in
some cases the results are good beyond what we could prove theoretically,
while in several other cases the approximation does not show the desired results.
In Section~\ref{sec:stab} we show that, as already reported in~\cite{Rodolfo},
the stabilization of the mass matrix allows to recover optimal convergence in
perfect agreement with the developed theory.
Finally, Section~\ref{sub:test_2_l_shaped_domain} covers the results for the
non-stabilized and stabilized VEM on the L-shape domain for orders $k=0,1,2$.

We already observed that the divergence of our VEM functions is a polynomial
in each element. Thus, the stiffness matrix $\m{A}$, on the left hand side of
our problem, can be computed exactly and thus does not require
stabilization.
The mass matrix $\m{B}$ on the right hand side can be stabilized or not,
depending on the different tests. In the case where it is not stabilized,
our method does not depend on any parameter.
In general, we obtain the following system:
\[
\m{A}\m{x}=\lambda_h \m{B} \m{x}.
\]

In order to build the above discrete problem, the
\textit{Dune-Vem} module~\cite{dunevem}, which is part of the
\textit{Dune-project}~\cite{dunegridpaperII,dune}, was utilized to define our discrete space
$\Vh$ and construct the matrices of the generalized eigenvalue problem.
This problem is then solved by the Scalable Library for Eigenvalue Problem Computations
(SLEPc)~\cite{slepc}.

 \subsection{Test 1: Rectangular domain}
 \label{sec:unit_squ_test}
In this test, we choose the domain $\Omega$ to be the rectangle
$(0,a)\times (0,b)$. Due to the simplicity of this domain, the analytic
eigenvalues are known to be
\[
\lambda_{n,m}:=\pi^2 \left(\left(\frac{n}{a}\right)^2 
+\left(\frac{m}{b}\right)^2  \right) \text{ with } n,m=0,1,2\dots, \ n+m\neq0,
\]
with their associated eigenfunctions
\begin{equation*}
\bfv_{n,m}:=
\begin{pmatrix}
\frac{n}{a}\sin\frac{n\pi x}{a} \cos\frac{m \pi y}{b}\\
\\
\frac{m}{b}\cos\frac{n\pi x}{a} \sin\frac{m \pi y}{b}\\
\end{pmatrix}
\text{ with } n,m=0,1,2\dots, \ n+m\neq0,
\end{equation*}
where we choose $a=1$ and $b=1.1$.

\begin{table}
\caption{
Levels of refinement for hexagonal mesh $\mathcal{H}_h$}
\label{tab:hexg_refinement}
\begin{tabular}{c| c c c c c }
 $\ell$ &  0  &  1  & 2   &3   &4       \\\hline 
$N_\ell$&  59 & 213 & 809 &3153& 12449  \\  
\end{tabular}
\end{table}

In our numerical tests, we consider five different polygonal mesh sequences.
We start from a coarse mesh and refine; hence, we denote the level 
of refinement by $\ell$ and the total number of elements by $N_\ell$. 
For the first four meshes we multiply the total number of elements by $4$ as 
we refine from one level to another. For example, taking level $\ell=1$, we have
\begin{itemize}
\item $\{\T_h \ \}$: structured triangular meshes with $N_1 =128 $
\item $\{\mathcal{Q}_h\}$: uniform rectangular meshes with $N_1 = 64$
\item $\{\mathcal{Z}_h\}$: trapezoidal meshes obtained by
perturbing rectangular meshes with $N_1 = 64$
\item $\{\mathcal{V}_h\}$: Voronoi meshes with $N_1 = 128$
\item $\{\mathcal{H}_h\}$: hexagonal meshes. The sequence in this case does not 
exactly lead to an increase in the number of elements by a factor of four, 
the number of elements on each level is given in Table~\ref{tab:hexg_refinement}. 
\end{itemize}
An example of the adopted meshes for level $\ell=0$ is shown in
Figures~\ref{fig:meshes_max_4_sided} and~\ref{fig:meshes_non_4_sided}. 


%
In general, we are seeking for the eigenpair $(\lambda_{h,i},\bfu_{h,i})$, and in
our tables we report the scaled eigenvalues
$\hat\lambda_{h,i}:=\frac{\lambda_{h,i}}{\pi^2}$. In each table the rows
contain the first $7$ eigenvalues, and the columns represent either a
refinement of the mesh in the case of $h$ refinement as we move from one level
to another, or the order $k$ of the VEM space in case of $k$ refinement.
We also depict the rate of convergence calculated for any two consecutive refinements.
\subsubsection{Eigenvalues with non-stabilized mass matrix - the lowest order case $k=0$} 
\label{subsub:eigenvalues_for_nonstabilized_lowest_order}
In this part, we report results for the non-stabilized lowest order case
with $k=0$. Table~\ref{tab:tri_squ_no_stab_slepc} presents the approximation of
the first 7 eigenvalues for triangular, rectangular and trapezoidal elements. As
proven in Section~\ref{sec:stabfree} the choice of triangular elements shows
good approximation. The convergence is optimal considering that eigenfunctions
of Problem~\eqref{eq:pbdisc0} are smooth. Moreover, approximation on the mesh
of rectangles and trapezoids also shows good convergence
of eigenvalues, although at the moment we do not have a proof available.

Table~\ref{tab:voron_hexg_no_stab_slepc} presents the
results on Voronoi $\{\mathcal{V}_h\}$ and hexagonal $\{\mathcal{H}_h\}$ meshes
where it is clear that optimal convergence is not achieved.

\begin{table}[t]
\caption{
\label{tab:tri_squ_no_stab_slepc}
Relative error for the first 7 eigenvalues on meshes $\mathcal{T}_h$, 
$\mathcal{Q}_h$ and $\mathcal{Z}_h$ with
non-stabilized mass matrix for lowest order $k=0$ }
\footnotesize
\centering
\begin{tabular}{c|c c c c}
\hline
Exact&\multicolumn{4}{c}{Relative error (rate)}\\[3pt]
\hline
\multicolumn{5}{c}{$\T_h$ }\\
\hline 
       0.826446 & 6.72e-04 & 1.81e-04 (1.89) & 4.62e-05 (1.97) & 1.16e-05 (1.99)  \\ 
       1.000000 & 8.39e-04 & 2.21e-04 (1.92) & 5.60e-05 (1.98) & 1.40e-05 (2.00)  \\ 
       1.826446 & 1.27e-02 & 3.17e-03 (2.00) & 7.93e-04 (2.00) & 1.98e-04 (2.00)  \\ 
       3.305785 & 2.81e-03 & 7.35e-04 (1.94) & 1.85e-04 (1.99) & 4.64e-05 (2.00)  \\ 
       4.000000 & 3.69e-03 & 9.04e-04 (2.03) & 2.25e-04 (2.01) & 5.62e-05 (2.00)  \\ 
       4.305785 & 2.24e-02 & 5.88e-03 (1.93) & 1.49e-03 (1.98) & 3.73e-04 (2.00)  \\ 
       4.826446 & 1.82e-02 & 4.25e-03 (2.10) & 1.04e-03 (2.03) & 2.60e-04 (2.01)  \\ 
   \hline 
   \multicolumn{5}{c}{ $\mathcal{Q}_h$} \\
   \hline
      0.826446 & 2.63e-02  & 6.46e-03 (2.02) & 1.61e-03 (2.01) & 4.02e-04 (2.00) \\ 
      1.000000 & 2.63e-02  & 6.46e-03 (2.02) & 1.61e-03 (2.01) & 4.02e-04 (2.00) \\ 
      1.826446 & 2.63e-02  & 6.46e-03 (2.02) & 1.61e-03 (2.01) & 4.02e-04 (2.00) \\ 
      3.305785 & 1.13e-01  & 2.63e-02 (2.10) & 6.46e-03 (2.02) & 1.61e-03 (2.01) \\ 
      4.000000 & 1.13e-01  & 2.63e-02 (2.10) & 6.46e-03 (2.02) & 1.61e-03 (2.01) \\ 
      4.305785 & 9.25e-02  & 2.17e-02 (2.09) & 5.33e-03 (2.02) & 1.33e-03 (2.01) \\ 
      4.826446 & 9.78e-02  & 2.29e-02 (2.10) & 5.63e-03 (2.02) & 1.40e-03 (2.01) \\ 
   \hline 
   \multicolumn{5}{c}{ $\mathcal{Z}_h$} \\
   \hline 
  0.826446 & 2.63e-02  & 6.46e-03 (2.02) & 1.61e-03 (2.01) & 4.02e-04 (2.00)  \\ 
  1.000000 & 2.39e-02  & 5.88e-03 (2.02) & 1.47e-03 (2.01) & 3.66e-04 (2.00)  \\ 
  1.826446 & 2.68e-02  & 6.67e-03 (2.01) & 1.66e-03 (2.00) & 4.16e-04 (2.00)  \\ 
  3.305785 & 1.13e-01  & 2.63e-02 (2.10) & 6.46e-03 (2.02) & 1.61e-03 (2.01)  \\ 
  4.000000 & 1.06e-01  & 2.50e-02 (2.09) & 6.15e-03 (2.02) & 1.53e-03 (2.01)  \\ 
  4.305785 & 9.81e-02  & 2.31e-02 (2.09) & 5.68e-03 (2.02) & 1.41e-03 (2.01)  \\ 
  4.826446 & 9.24e-02  & 2.21e-02 (2.06) & 5.47e-03 (2.02) & 1.36e-03 (2.00)  \\ 
    \hline
    $\ell$ & $1$ & $2$ & $3$ & $4$ \\
    \hline
 \end{tabular}
\end{table}
\begin{table}[t]
\caption{
\label{tab:voron_hexg_no_stab_slepc}
Relative error for the first 7 eigenvalues on meshes $\mathcal{V}_h$ and $\mathcal{H}_h$ with
non-stabilized mass matrix and lowest order $k=0$}
\footnotesize
\centering
\begin{tabular}{c|c c c c}
\hline
Exact&\multicolumn{4}{c}{Relative error (rate)}\\[3pt]
  \hline 
   \multicolumn{5}{c}{ $\mathcal{V}_h$} \\
   \hline 
  0.826446 & 1.59e-02 & 4.39e-03 (1.80) & 1.31e-03 (1.71) & 5.65e-04 (1.21) \\ 
  1.000000 & 2.38e-02 & 6.72e-03 (1.77) & 2.04e-03 (1.69) & 6.52e-04 (1.64) \\ 
  1.826446 & 4.58e-02 & 1.55e-02 (1.52) & 3.88e-03 (1.96) & 2.38e-03 (0.70) \\ 
  3.305785 & 6.98e-02 & 2.44e-02 (1.47) & 9.33e-03 (1.37) & 2.93e-03 (1.66) \\ 
  4.000000 & 1.22e-01 & 3.09e-02 (1.92) & 1.17e-02 (1.38) & 3.95e-03 (1.55) \\ 
  4.305785 & 1.24e-01 & 6.21e-02 (0.97) & 1.49e-02 (2.03) & 5.08e-03 (1.54) \\ 
  4.826446 & 2.08e-01 & 5.00e-02 (1.99) & 1.88e-02 (1.39) & 5.64e-03 (1.72) \\ 
    \hline
   \multicolumn{5}{c}{ $\mathcal{H}_h$} \\
   \hline 
   0.826446 & 6.18e-03 & 1.57e-03 (2.00) & 3.97e-04 (2.00) & 9.99e-05 (2.00) \\ 
   1.000000 & 1.08e-01 & 9.69e-02 (0.15) & 9.29e-02 (0.06) & 9.12e-02 (0.03) \\ 
   1.826446 & 5.07e-02 & 3.54e-02 (0.52) & 3.05e-02 (0.22) & 2.87e-02 (0.09) \\ 
   3.305785 & 2.51e-02 & 6.32e-03 (2.01) & 1.59e-03 (2.00) & 4.00e-04 (2.00) \\ 
   4.000000 & 2.92e-01 & 2.50e-01 (0.23) & 2.39e-01 (0.07) & 2.35e-01 (0.02) \\ 
   4.305785 & 2.78e-01 & 2.40e-01 (0.22) & 2.28e-01 (0.07) & 2.24e-01 (0.03) \\ 
   4.826446 & 6.30e-01 & 5.63e-01 (0.16) & 5.47e-01 (0.04) & 5.42e-01 (0.01) \\ 
    \hline
    $\ell$ & $1$ & $2$ & $3$ & $4$ \\ 
    \hline 
\end{tabular}
\end{table}

\subsubsection{Eigenvalues with non-stabilized mass matrix - higher order
cases} 
\label{subsub:higher_order_nonstabilized_vem}

\begin{table}[t]
\caption{ \label{tab:tri_squ_no_stab_higher_order}
Relative error for the first 7 eigenvalues on the triangular mesh $\mathcal{T}_h$: 
non-stabilized mass matrix with orders $k = 1,2,3$ on level two ($\ell=2$)
}
\footnotesize
\centering 
\begin{tabular}{ c |c c c }\hline
Exact   & $k=1$ & $k=2$ & $k=3$ \\[3pt]
 \hline
   0.826446 & 1.19e-06 & 4.35e-10 & 9.66e-14  \\ 
   1.000000 & 1.10e-06 & 4.24e-10 & 9.75e-14  \\ 
   1.826446 & 1.13e-05 & 1.06e-08 & 5.52e-12  \\ 
   3.305785 & 2.07e-05 & 2.77e-08 & 1.85e-11  \\ 
   4.000000 & 2.00e-05 & 2.70e-08 & 1.84e-11  \\ 
   4.305785 & 5.79e-05 & 1.19e-07 & 1.40e-10  \\ 
   4.826446 & 5.44e-05 & 1.09e-07 & 1.32e-10  \\ 
 \hline
\end{tabular}
\end{table}
%

\begin{table}[t]
\caption{ \label{tab:voro_no_stab_higher_order}
Relative error for the first 7 eigenvalues on the Voronoi mesh $\mathcal{V}_h$:
non-stabilized mass matrix with orders $k = 1,2,3$ on level two ($\ell=2$)}
\footnotesize
\centering
\begin{tabular}{ c| c c c} \hline
 Exact & $k=1$ & $k=2$ & $k=3$ \\[3pt]
 \hline
  0.826446 & 6.28e-04 & 4.46e-09 & 1.98e-12  \\ 
  1.000000 & 9.90e-04 & 9.16e-09 & 5.75e-12  \\ 
  1.826446 & 3.04e-03 & 1.15e-07 & 7.20e-11  \\ 
  3.305785 & 6.91e-03 & 7.43e-07 & 4.31e-10  \\ 
  4.000000 & 9.59e-03 & 1.57e-06 & 8.00e-10  \\ 
  4.305785 & 1.91e-02 & 2.63e-06 & 2.50e-09  \\ 
  4.826446 & 1.62e-02 & 3.94e-06 & 3.53e-09  \\
 \hline
 \end{tabular}
\end{table}  
%
%
\begin{table}[t]
 \caption{\label{tab:hex_no_stab_even_higher_order}
Relative error for the first 7 eigenvalues on the hexagonal mesh $\mathcal{H}_h$: non-stabilized mass
matrix with even orders $k = 2,4,6$ on level two ($\ell=2$)}
\footnotesize
\centering
\begin{tabular}{c|c c c } \hline
 Exact & $k=2$ & $k=4$ & $k=6$ \\[3pt]
 \hline
    0.826446 & 5.33e-11 & 8.70e-13 & 2.70e-13  \\ 
    1.000000 & 1.45e-06 & 1.61e-12 & 1.00e-11 \\ 
    1.826446 & 8.82e-07 & 4.62e-13 & 1.96e-13 \\ 
    3.305785 & 3.35e-09 & 7.68e-14 & 3.95e-14 \\ 
    4.000000 & 3.76e-05 & 1.07e-12 & 1.33e-15 \\ 
    4.305785 & 1.81e-04 & 2.74e-12 & 2.70e-14 \\ 
    4.826446 & 5.14e-06 & 3.55e-13 & 1.29e-14 \\
 \hline
 \end{tabular}
\end{table}

%
%
\begin{table}[t]
\caption{ \label{tab:hex_no_stab_odd_higher_order}
First 7 eigenvalues on the hexagonal mesh $\mathcal{H}_h$: non-stabilized mass matrix with odd orders $k=1,3,5$ on level two ($\ell=2$). The well approximating
eigenvalues are highlighted}
\centering
\footnotesize
  \begin{tabular}{c c cc cc cc}\hline
$\hat\lambda_{h,i}$ & Exact & \multicolumn{2}{c}{$k=1$}  &
\multicolumn{2}{ c }{ $k=3$}  &  \multicolumn{2}{c}{$k=5$} \\
&  &  real &  imaginary &  real & imaginary &  real &  imaginary\\
\hline
  1 & 0.826446  & 0.139411 & -843.980759 & 0.223144 & -529.593449 & 0.160601 & -1144.73996  \\
  2 & 1.000000  & 0.139411 &  843.980759 & 0.223144 &  529.593449 & 0.160601 &  1144.73996  \\
  3 & 1.826446  & 0.280881 & -679.584440 & 0.395428 & -403.726161 & 0.195456 & -381.803706  \\
    &           & \vdots   &  \vdots     &  \vdots  &  \vdots     & \vdots   &  \vdots       \\
  7  & 4.826446 & 0.556690 & -537.308077 & 0.624345 & -696.587871 &
{\color{teal}\bf 0.826446}  & {\color{teal}\bf 0 } \\
  8  & 7.305785 & 0.556690 &  537.308077 & 0.624345 &  696.587871 &
{\color{teal}\bf1.000000}  & {\color{teal}\bf 0 } \\
  9  & 7.438017 & {\color{teal}\bf0.826447}  & {\color{teal} \bf0 }       & 0.820943 &-559.430831  & 1.298910 & -863.907733 \\
  10 & 8.438017 & 0.855616 & -711.505977  & 0.820943 & 559.430831  & 1.298910 & 863.907733 \\
  11 & 9.000000 & 0.855616 &  711.505977  & {\color{teal}\bf 0.826446} &
{\color{teal}\bf 0 }      & 1.587927 & -1026.2485 \\ \hline
\end{tabular}
\end{table}
Within the \textit{Dune-project} it is pretty straightforward to use higher order
VEM spaces. To our knowledge there is no other package that has this
property for VEM. Although no theory has been developed in the case of non
stabilized right hand side, we investigate in this
section higher order VEM approximation spaces with a fixed level ($\ell=2)$
for each case. As in the lowest order case, when taking higher
orders for triangular, rectangular and trapezoidal elements with orders being
$k=1,2,3$, good approximation of eigenvalues is achieved and one can
appreciate the better accuracy of the eigenvalues as the degree increases. As
there is no
significant difference in the results between different types of meshes,
Table~\ref{tab:tri_squ_no_stab_higher_order} only reports the results for
triangular mesh $\mathcal{T}_h$.

The use of VEM spaces of order $k=1,2,3$ on Voronoi meshes 
$\mathcal{V}_h$ produces good results. Indeed,
Table~\ref{tab:voro_no_stab_higher_order} shows that the first $7$ eigenvalues
are well approximated unlike the lowest order case $k=0$ reported in
Table~\ref{tab:voron_hexg_no_stab_slepc}.
On the contrary, a different finding can be observed for higher orders on
hexagonal meshes $\mathcal{H}_h$.
A peculiar phenomena has been found for even and odd orders. The first $7$
eigenvalues for orders $k=2,4,6$ are well approximated as shown in
Table~\ref{tab:hex_no_stab_even_higher_order}.
On the other hand, for higher order odd values, $k=1,3,5$, several
spurious complex eigenvalues appear. This is detailed in
Table~\ref{tab:hex_no_stab_odd_higher_order}, where \textit{good} eigenvalues
are those with vanishing imaginary part. The complex eigenvalues are clearly
associated with singular pencils as described in Section~\ref{sec:pencil}.
These VEM spaces obviously are not useful in the numerical approximation of
the problem.

In order to confirm the good behavior of the even higher order hexagonal
case, we plot also the eigenfunction corresponding to the first, third, fifth and sixth
eigenvalue for $k=2$ in Figure~\ref{fig:eigenfunction_order_2_hexg}.
\begin{figure}[t]
\includegraphics[width=.24\textwidth]{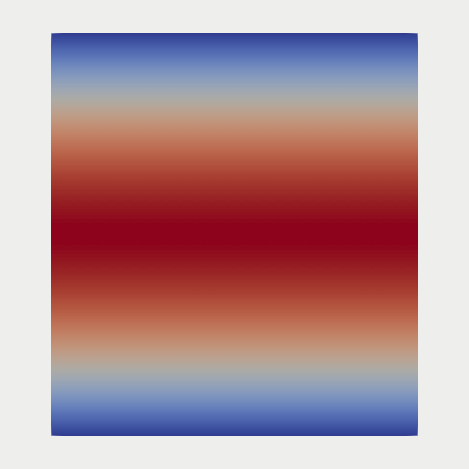}
\includegraphics[width=.24\textwidth]{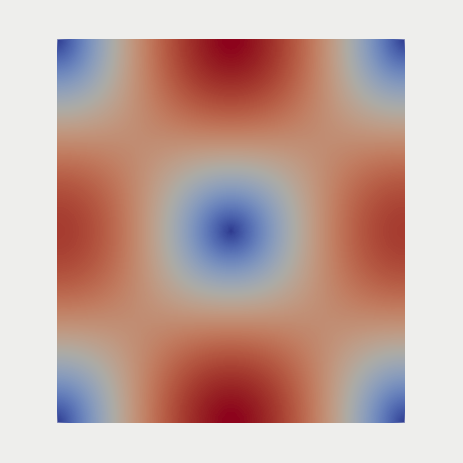}
\includegraphics[width=.24\textwidth]{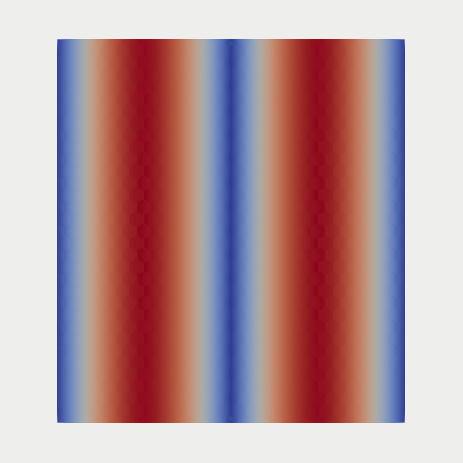}
\includegraphics[width=.24\textwidth]{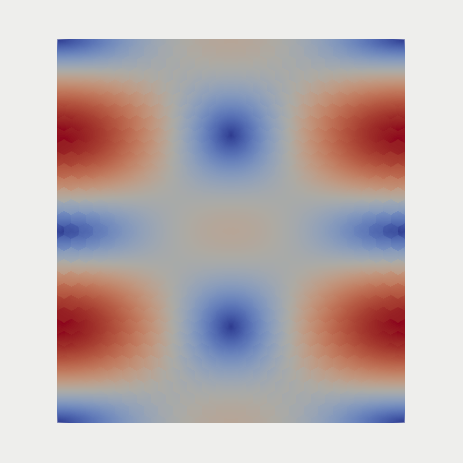}

\caption{Modulus of the first, third, fifth and sixth eigenfunctions on the hexagonal mesh ($\ell=2$) with
order $k=2$}
\label{fig:eigenfunction_order_2_hexg}
\end{figure}

\subsubsection{Eigenvalues with stabilized mass matrix}
\label{sec:stab}

\begin{table}[t]
\caption{
 \label{tab:voron_hex_stab_lowest_degree}
Relative error for the first $7$ eigenvalues on meshes $\mathcal{V}_h$ and $\mathcal{H}_h$ with
stabilized mass matrix $(\sigma_E=0.1)$ and $k=0$}
\footnotesize
\centering
\begin{tabular}{c|c c c c}
\hline
Exact&\multicolumn{4}{c}{Relative error (rate)}\\[3pt]
\hline
\multicolumn{5}{c}{$\mathcal{V}_h$}\\
\hline
  0.826446 & 7.33e-03  & 1.80e-03 (1.96) & 4.46e-04 (1.98) & 1.11e-04 (1.99) \\ 
  1.000000 & 9.16e-03  & 2.23e-03 (1.98) & 5.46e-04 (2.00) & 1.35e-04 (2.01) \\ 
  1.826446 & 1.61e-02  & 3.96e-03 (1.96) & 9.90e-04 (1.97) & 2.45e-04 (2.00) \\ 
  3.305785 & 3.00e-02  & 7.29e-03 (1.98) & 1.78e-03 (2.00) & 4.45e-04 (1.99) \\ 
  4.000000 & 3.71e-02  & 8.96e-03 (1.99) & 2.18e-03 (2.01) & 5.39e-04 (2.00) \\ 
  4.305785 & 3.87e-02  & 9.50e-03 (1.96) & 2.31e-03 (2.01) & 5.77e-04 (1.99) \\ 
  4.826446 & 4.40e-02  & 1.07e-02 (1.98) & 2.62e-03 (1.99) & 6.50e-04 (2.00) \\ 
\hline
\multicolumn{5}{c}{$\mathcal{H}_h$}\\
\hline 
  0.826446 & 4.51e-03  & 1.12e-03 (2.03) & 2.82e-04 (2.01) & 7.03e-05 (2.01) \\ 
  1.000000 & 6.12e-03  & 1.52e-03 (2.03) & 3.80e-04 (2.01) & 9.50e-05 (2.01) \\ 
  1.826446 & 1.09e-02  & 2.68e-03 (2.04) & 6.68e-04 (2.02) & 1.67e-04 (2.01) \\ 
  3.305785 & 1.81e-02  & 4.51e-03 (2.03) & 1.12e-03 (2.02) & 2.81e-04 (2.00) \\ 
  4.000000 & 2.49e-02  & 6.11e-03 (2.05) & 1.52e-03 (2.02) & 3.80e-04 (2.01) \\ 
  4.305785 & 2.49e-02  & 6.12e-03 (2.05) & 1.52e-03 (2.02) & 3.79e-04 (2.01) \\ 
  4.826446 & 2.99e-02  & 7.30e-03 (2.06) & 1.82e-03 (2.02) & 4.53e-04 (2.01) \\ 
  \hline
$\ell$ & $1$ & $2$ & $3$ & $4$ \\
\hline
 \end{tabular}
\end{table}

\begin{table}[t]
\caption{ \label{tab:hex_stab_odd_higher_order}
Relative errors for the first $7$ eigenvalues on the mesh $\mathcal{H}_h$: stabilized mass
matrix $(\sigma_E=0.1)$ and orders $k=1,3,5$ on level two ($\ell=2$)}
\footnotesize
\centering
\begin{tabular}{c|c c c } \hline
 Exact & $k=1$ & $k=3$ & $k=5$ \\[3pt]
 \hline
   0.826446 & 2.19e-07 & 4.16e-14 & 1.11e-14  \\ 
   1.000000 & 3.61e-07 & 4.05e-13 & 1.98e-13  \\ 
   1.826446 & 1.23e-06 & 9.41e-14 & 1.63e-14  \\ 
   3.305785 & 3.49e-06 & 1.43e-13 & 5.02e-14  \\ 
   4.000000 & 5.78e-06 & 1.67e-13 & 1.09e-14  \\ 
   4.305785 & 5.83e-06 & 1.82e-13 & 1.05e-14  \\ 
   4.826446 & 9.23e-06 & 1.36e-12 & 8.47e-15  \\ 
\hline
 \end{tabular}
\end{table}

We conclude this subsection on the numerical results for the rectangular domain,
by showing that in the case of lowest order and stabilized mass matrix the
eigenvalues are computed correctly even in cases where the non stabilized
matrix gave unreliable results. 
We consider the same natural stabilization as the one presented in
~\cite{Rodolfo}, namely 
\[
\SE(\bfu_h,\bfv_h)=\sigma_E\sum_{k=1}^{N_E}\left(\int_{e_k}\bfu_h\cdot\bfn\right)
\left(\int_{e_k}\bfv_h\cdot\bfn\right)\qquad \bfu_h,\ \bfv_h\in\VE,
\]
where $N_E$ is the number of edges of $E$.
Table~\ref{tab:voron_hex_stab_lowest_degree} reports on the results obtained
in the lowest order case with both the Voronoi and hexagonal meshes  while
Table~\ref{tab:hex_stab_odd_higher_order} presents results for higher order
odd values for hexagonal mesh on $\ell=2$.

\subsection{Test 2: L-shaped domain} 
\label{sub:test_2_l_shaped_domain}

In this test we consider the L-shaped domain, as in the previous test, 
where now we remove the lower right square of the domain. Hence, it consists of the union of three squares subdivided into $2^{l+1}$ subsquares.
The domain chosen is such that 
$\Omega=(-1,1)^2\setminus[(0,1)\times(-1,0)]$ for which we have the reference
solutions provided in~\cite{l_shape_benchmark}. 

We consider a sequence of triangular and square meshes: a sample of the two
sequences is reported in Figure~\ref{fig:L_shapes}.
\begin{figure}[t]
\centering
\includegraphics[width=.3\textwidth]{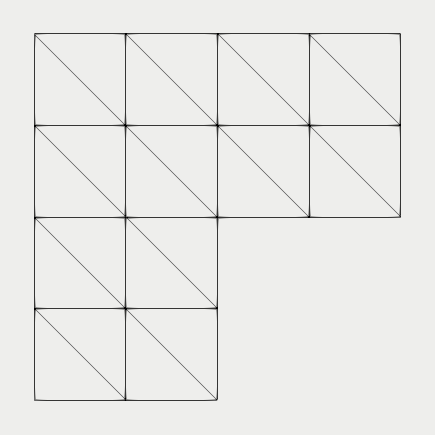}
\hspace{0.5cm}
\includegraphics[width=.3\textwidth]{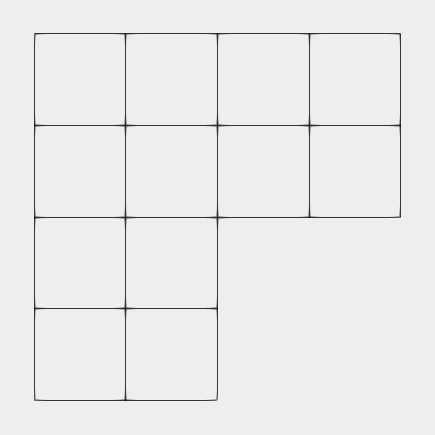}
\caption{Examples of triangular and square structured meshes: $\mathcal{LT}_h$
left and $\mathcal{LQ}_h$ right, corresponding to level $\ell=0$}
\label{fig:L_shapes}
\end{figure}

\subsubsection{Eigenvalues with non-stabilized mass matrix}
Table~\ref{tb:ns_L_tri} shows the results corresponding to the mesh sequence
$\mathcal{LT}_h$ and order equal to $k=0,1,2$. It can be seen that the results
are in perfect agreement with what one would expect: the first and fifth
eigenvalues correspond to singular eigenfunctions, the second one has $H^1$
regular eigenfunction, and the remaining third and fourth eigenvalues
correspond to analytic eigenfunctions.

\begin{table}[t]
 \caption{Triangular mesh on the L-shaped domain: computations with
non-stabilized mass matrix}
\label{tb:ns_L_tri}
\footnotesize
    \centering
\begin{tabular}{c|c c c c c c c c c c}
\hline
 Exact&\multicolumn{4}{c}{Relative error (rate)}\\[3pt]
\hline
\multicolumn{5}{c}{$k=0$}\\
\hline
  1.475622  & 1.53e-02  & 5.98e-03 (1.36) & 2.35e-03 (1.35) & 9.24e-04 (1.34)  \\
  3.534031  & 1.83e-03  & 4.35e-04 (2.07) & 1.05e-04 (2.05) & 2.58e-05 (2.03)  \\ 
  9.869604  & 1.17e-03  & 2.90e-04 (2.01) & 7.24e-05 (2.00) & 1.81e-05 (2.00)  \\ 
  9.869604  & 1.13e-03  & 2.88e-04 (1.97) & 7.22e-05 (1.99) & 1.81e-05 (2.00)  \\ 
  11.389479 & 6.18e-03  & 1.53e-03 (2.01) & 3.82e-04 (2.01) & 9.52e-05 (2.00)  \\
 \hline 
\multicolumn{5}{c}{$k=1$}\\
\hline
  1.475622  & 8.60e-04  & 3.37e-04 (1.35) & 1.33e-04 (1.34) & 5.27e-05 (1.34) \\ 
  3.534031  & 1.23e-05  & 1.38e-06 (3.15) & 1.82e-07 (2.93) & 2.63e-08 (2.79) \\ 
  9.869604  & 1.28e-05  & 8.12e-07 (3.98) & 5.09e-08 (4.00) & 3.18e-09 (4.00) \\ 
  9.869604  & 2.49e-05  & 1.57e-06 (3.99) & 9.80e-08 (4.00) & 6.12e-09 (4.00) \\ 
  11.389479 & 5.73e-05  & 3.83e-06 (3.90) & 2.80e-07 (3.78) & 2.40e-08 (3.54) \\ 
\hline 
\multicolumn{5}{c}{$k=2$}\\
\hline
 1.475622   & 1.18e-04  & 4.68e-05 (1.33) & 1.86e-05 (1.33) & 7.38e-06 (1.33) \\ 
 3.534031   & 2.19e-06  & 3.42e-07 (2.67) & 5.39e-08 (2.67) & 8.48e-09 (2.67) \\ 
 9.869604   & 2.59e-08  & 4.09e-10 (5.99) & 6.18e-12 (6.05) & 2.14e-13 (4.85) \\ 
 9.869604   & 2.87e-08  & 4.53e-10 (5.99) & 7.08e-12 (6.00) & 6.92e-13 (3.35) \\ 
 11.389479  & 1.08e-06  & 1.54e-07 (2.81) & 2.40e-08 (2.68) & 3.78e-09 (2.67) \\ 
 \hline
    $\ell$ & $1$ & $2$ & $3$ & $4$ \\
\hline
  \end{tabular}
\end{table}

The results on the square mesh sequence are reported in
Table~\ref{tb:ns_L_squ} and present a surprising behavior in the lowest order
case when $k=0$. In particular, the first eigenvalue, corresponding to the
eigenfunction with a strong singularity, is approximated with second order
accuracy.
We observed this behavior with the dune code and confirmed these findings using
a code written in MATLAB. Results from both codes showed the same
superconvergence phenomenon.

\begin{table}[t]
 \caption{Square mesh on the L-shaped domain: computations with 
non-stabilized mass matrix}
\label{tb:ns_L_squ}
\footnotesize
    \centering
\begin{tabular}{c|c c c c c c c c c c}
\hline
 Exact&\multicolumn{4}{c}{Relative error (rate)}\\[3pt]
\hline
\multicolumn{5}{c}{$k=0$}\\
\hline
   1.475622  & 4.57e-03  & 1.16e-03 (1.98) & 2.91e-04 (1.99) & 7.32e-05 (1.99)   \\ 
   3.534031  & 7.37e-03  & 1.84e-03 (2.01) & 4.58e-04 (2.00) & 1.15e-04 (2.00)  \\ 
   9.869604  & 2.63e-02  & 6.46e-03 (2.02) & 1.61e-03 (2.01) & 4.02e-04 (2.00)  \\ 
   9.869604  & 2.63e-02  & 6.46e-03 (2.02) & 1.61e-03 (2.01) & 4.02e-04 (2.00)  \\ 
   11.389479 & 2.32e-02  & 5.71e-03 (2.02) & 1.42e-03 (2.01) & 3.55e-04 (2.00) \\ 
 \hline 
\multicolumn{5}{c}{$k=1$}\\
\hline
    1.475622  & 5.76e-03  & 2.47e-03 (1.22) & 1.03e-03 (1.27) & 4.19e-04 (1.29) \\ 
    3.534031  & 2.25e-04  & 5.93e-05 (1.92) & 1.55e-05 (1.94) & 3.99e-06 (1.96) \\ 
    9.869604  & 6.55e-05  & 4.12e-06 (3.99) & 2.58e-07 (4.00) & 1.61e-08 (4.00) \\ 
    9.869604  & 6.55e-05  & 4.12e-06 (3.99) & 2.58e-07 (4.00) & 1.61e-08 (4.00) \\ 
    11.389479 & 2.28e-03  & 5.58e-04 (2.03) & 1.39e-04 (2.01) & 3.47e-05 (2.00) \\ 

\hline 
\multicolumn{5}{c}{$k=2$}\\
\hline
  1.475622  & 1.94e-03  & 7.72e-04 (1.33) & 3.07e-04 (1.33) & 1.22e-04 (1.33) \\ 
  3.534031  & 1.37e-05  & 2.16e-06 (2.66) & 3.41e-07 (2.67) & 5.37e-08 (2.67) \\ 
  9.869604  & 7.23e-08  & 1.14e-09 (5.99) & 1.78e-11 (6.00) & 1.98e-13 (6.49) \\ 
  9.869604  & 7.23e-08  & 1.14e-09 (5.99) & 1.78e-11 (6.00) & 2.32e-13 (6.26) \\ 
  11.389479 & 8.56e-06  & 1.11e-06 (2.95) & 1.60e-07 (2.79) & 2.44e-08 (2.72) \\
 \hline
    $\ell$ & $1$ & $2$ & $3$ & $4$ \\
\hline
  \end{tabular}
\end{table}

In order to better appreciate this unexpected superconvergence result, we
report the results obtained after further refinements in
Table~\ref{tb:squarefine}. 

\begin{table}[t]
\caption{Convergence analysis for the L-shaped domain with non stabilized mass
matrix: $k=0$ and square mesh sequence}
\label{tb:squarefine}
\footnotesize
\centering
\begin{tabular}{c|ccccc}
\hline
Exact&\multicolumn{5}{c}{Relative error (rate)}\\[3pt]
\hline
1.47562 & 7.32e-05 & 1.84e-05 (2.00) & 4.60e-06 (2.00) & 1.15e-06 (2.00) & 2.88e-07 (2.00)\\
3.53403 & 1.15e-04 & 2.86e-05 (2.00) & 7.16e-06 (2.00) & 1.79e-06 (2.00) & 4.47e-07 (2.00)\\
9.86960 & 4.02e-04 & 1.00e-04 (2.00) & 2.51e-05 (2.00) & 6.27e-06 (2.00) & 1.57e-06 (2.00)\\
9.86960 & 4.02e-04 & 1.00e-04 (2.00) & 2.51e-05 (2.00) & 6.27e-06 (2.00) & 1.57e-06 (2.00)\\
11.38948 & 3.55e-04 & 8.88e-05 (2.00) & 2.22e-05 (2.00) & 5.55e-06 (2.00) & 1.39e-06 (2.00)\\
\hline
$\ell$ &4   & 5   & 6   & 7   & 8 \\
\hline
\end{tabular}
\end{table}


\subsubsection{Eigenvalues with stabilized mass matrix}


The last test we are going to show are related to triangular and square
mesh sequence. Tables~\ref{tb:s_L_tri} and~\ref{tb:s_L_squ} report the
corresponding results for $\sigma_E=0.1$ when $k$ goes from $0$ to $2$.
We have also numerically investigated cases which go beyond our theory. The
tests indicate that when using higher order methods stabilization is not
required even on general polygonal meshes.

\begin{table}[h]
 \caption{Triangular mesh on the L-shaped domain: computations with
stabilized mass matrix for $\sigma_E=0.1$}
\label{tb:s_L_tri}
\footnotesize
    \centering
    \begin{tabular}{c|c c c c c c c c c c}
    \hline
    Exact&\multicolumn{4}{c}{Relative error (rate)}\\[3pt]
\hline
\multicolumn{5}{c}{$k=0$}\\
\hline
   1.475622  & 1.53e-02  & 5.99e-03 (1.36) & 2.35e-03 (1.35) & 9.25e-04 (1.34) \\ 
   3.534031  & 1.70e-03  & 4.02e-04 (2.08) & 9.71e-05 (2.05) & 2.37e-05 (2.03) \\ 
   9.869604  & 1.55e-03  & 3.79e-04 (2.03) & 9.44e-05 (2.01) & 2.36e-05 (2.00) \\ 
   9.869604  & 7.56e-04  & 1.99e-04 (1.92) & 5.04e-05 (1.98) & 1.27e-05 (1.99) \\ 
   11.389479 & 5.77e-03  & 1.44e-03 (2.01) & 3.57e-04 (2.01) & 8.91e-05 (2.00) \\ 
 \hline 
\multicolumn{5}{c}{$k=1$}\\
\hline
  1.475622  & 1.39e-03  & 5.52e-04 (1.34) & 2.20e-04 (1.33) & 8.72e-05 (1.33) \\ 
  3.534031  & 6.24e-06  & 4.85e-07 (3.69) & 4.36e-08 (3.48) & 4.82e-09 (3.17) \\ 
  9.869604  & 1.04e-05  & 6.43e-07 (4.02) & 3.99e-08 (4.01) & 2.49e-09 (4.00) \\ 
  9.869604  & 2.21e-05  & 1.37e-06 (4.01) & 8.53e-08 (4.01) & 5.32e-09 (4.00) \\ 
  11.389479 & 5.07e-05  & 3.19e-06 (3.99) & 2.03e-07 (3.97) & 1.35e-08 (3.91)\\ 
\hline 
\multicolumn{5}{c}{$k=2$}\\
\hline
  1.475622  & 4.87e-04  & 1.93e-04 (1.33) & 7.68e-05 (1.33) & 3.05e-05 (1.33) \\ 
  3.534031  & 4.04e-07  & 6.24e-08 (2.69) & 9.78e-09 (2.67) & 1.55e-09 (2.66) \\ 
  9.869604  & 2.28e-08  & 3.64e-10 (5.97) & 5.65e-12 (6.01) & 5.81e-14 (6.60) \\ 
  9.869604  & 2.57e-08  & 4.08e-10 (5.98) & 6.40e-12 (5.99) & 3.52e-13 (4.18) \\ 
  11.389479 & 2.59e-07  & 2.87e-08 (3.18) & 4.34e-09 (2.72) & 6.89e-10 (2.66) \\
 \hline
    $\ell$ & $1$ & $2$ & $3$ & $4$ \\
\hline
  \end{tabular}
\end{table}

\begin{table}[h]
 \caption{Square mesh on the L-shaped domain: computations with
stabilized mass matrix for $\sigma_E=0.1$}
\label{tb:s_L_squ}
\footnotesize
    \centering
    \begin{tabular}{c|c c c c c c c c c c}
    \hline
    Exact&\multicolumn{4}{c}{Relative error (rate)}\\[3pt]
\hline
\multicolumn{5}{c}{$k=0$}\\
\hline
  1.475622  & 1.70e-03  & 1.16e-03 (0.56) & 5.80e-04 (1.00) & 2.60e-04 (1.15) \\ 
  3.534031  & 5.18e-03  & 1.29e-03 (2.01) & 3.22e-04 (2.00) & 8.03e-05 (2.00) \\ 
  9.869604  & 1.82e-02  & 4.51e-03 (2.01) & 1.13e-03 (2.00) & 2.81e-04 (2.00) \\ 
  9.869604  & 1.82e-02  & 4.51e-03 (2.01) & 1.13e-03 (2.00) & 2.81e-04 (2.00) \\ 
  11.389479 & 1.61e-02  & 3.99e-03 (2.01) & 9.95e-04 (2.00) & 2.49e-04 (2.00) \\ 
 \hline 
\multicolumn{5}{c}{$k=1$}\\
\hline
   1.475622  & 8.65e-04  & 3.45e-04 (1.32) & 1.37e-04 (1.33) & 5.45e-05 (1.33) \\ 
   3.534031  & 6.55e-06  & 4.90e-07 (3.74) & 4.20e-08 (3.54) & 4.38e-09 (3.26) \\ 
   9.869604  & 3.28e-05  & 2.06e-06 (3.99) & 1.29e-07 (4.00) & 8.06e-09 (4.00) \\ 
   9.869604  & 3.28e-05  & 2.06e-06 (3.99) & 1.29e-07 (4.00) & 8.06e-09 (4.00) \\ 
   11.389479 & 6.29e-05  & 4.00e-06 (3.98) & 2.56e-07 (3.97) & 1.68e-08 (3.93) \\ 
\hline 
\multicolumn{5}{c}{$k=2$}\\
\hline
   1.475622 & 1.94e-03  & 7.72e-04 (1.33) & 3.07e-04 (1.33) & 1.22e-04 (1.33) \\ 
   3.534031 & 1.37e-05  & 2.16e-06 (2.66) & 3.41e-07 (2.67) & 5.37e-08 (2.67) \\ 
   9.869604 & 7.23e-08  & 1.14e-09 (5.99) & 1.78e-11 (6.00) & 1.98e-13 (6.49) \\ 
   9.869604 & 7.23e-08  & 1.14e-09 (5.99) & 1.78e-11 (6.00) & 2.32e-13 (6.26) \\ 
  11.389479 & 8.56e-06  & 1.11e-06 (2.95) & 1.60e-07 (2.79) & 2.44e-08 (2.72) \\ 
 \hline
    $\ell$ & $1$ & $2$ & $3$ & $4$ \\
\hline
 \end{tabular}
\end{table}

\section{Conclusion} 
\label{sec:conclusion}
We have conducted an analysis of the virtual element method for the acoustic vibration problem and successfully demonstrated the convergence of the solution operator. This analysis was based on the equivalence between the original problem and a mixed system, which was then utilized in our analysis. The key element in proving convergence was the SDCP. Notably, we established that in certain cases, particularly with the use of lowest-order schemes, stabilization measures are not required. Our findings have been validated through several numerical tests, which align with our theoretical results.

\bibliographystyle{plain}
\bibliography{ref}

\end{document}